\documentclass[a4paper,12pt]{amsart}
\usepackage{graphicx} % Required for inserting image
\usepackage{graphicx} % Required for inserting images
\usepackage[all,poly]{xy}
\usepackage{amsmath,amsthm,amsfonts,latexsym,amssymb, mathrsfs,stmaryrd,graphics,pst-all,tkz-euclide,yfonts,mathtools,tikz,graphicx,wrapfig}
\usepackage[utf8]{inputenc}
\usepackage{tikz-cd}
\usetikzlibrary{positioning}
\usepackage{fullpage}
\usepackage{color}
\usepackage[pagebackref,colorlinks]{hyperref}
\usepackage{enumerate}
\scrollmode
\usepackage{amsmath, amssymb, amsthm, mathtools}
\theoremstyle{plain}
\newtheorem{theorem}{Theorem}[section]
\newtheorem{lemma}[theorem]{Lemma}
\newtheorem{corollary}[theorem]{Corollary}
\newtheorem{proposition}[theorem]{Proposition}

\theoremstyle{definition}
\newtheorem{definition}[theorem]{Definition}

\newtheorem{example}[theorem]{Example}

\newtheorem{remark}[theorem]{Remark}
\newtheorem{remarks and examples}[theorem]{Remarks and Examples}

\newcommand{\OO}{\mathcal{O}}

\newcommand{\F}{F}

\newcommand{\cF}{F}

\newcommand{\Mat}{\mathrm{Mat}}
\newcommand{\GL}{\mathrm{GL}}

\newcommand{\simto}{\xrightarrow{\;\sim\;}}

\DeclareMathOperator{\id}{id}
\DeclareMathOperator{\QCoh}{QCoh}
\DeclareMathOperator{\im}{im}

\usepackage{amsthm}

\title{On relative Ulrich bundles and generalized Clifford algebras}
\author{Soham Mondal, Anindya Mukherjee}

\providecommand{\cO}{\mathcal{O}}
\providecommand{\cF}{F}

\providecommand{\mfm}{\mathfrak{m}}
\providecommand{\Mat}{\operatorname{Mat}}

\providecommand{\coker}{\operatorname{coker}}
\providecommand{\Fitt}{\operatorname{Fitt}}
\providecommand{\Yf}{Y_f}

\newcommand{\adj}{\operatorname{adj}}

\date{March 2026}

\begin{document}

\begin{abstract}
Let $X$ be a smooth projective scheme and $E$ a vector bundle on $X$. For a relative hypersurface $Y_f \subset \mathbb{P}(E)$ of degree $d$ defined by a global section $f$, we establish a functorial equivalence between the category of relatively Ulrich bundles on $Y_f$ and the category of representations of the associated generalized Clifford algebra $C_f$. This equivalence generalizes the classical Ulrich-Clifford correspondence of Coskun-Kulkarni-Mustopa and provides a purely algebraic framework that bypasses geometric obstructions in the relative setting.

As a first application, we prove that relative hypersurfaces are Ulrich-wild: there exist families of indecomposable relatively Ulrich bundles $\{E_N\}$ with
\[
\dim \mathrm{Ext}^1_{Y_f}(E_N, E_N) \to \infty \quad \text{as } N \to \infty.
\]
We further show that  relative hyperplanes possess a minimal Ulrich complexity of one. Moving beyond degree one, we illustrate how unavoidable homological obstructions require complex machinery, such as matrix factorizations, equivalently generalized Clifford algebras, to find solutions.
\end{abstract}
\maketitle
\section{Introduction}
Throughout this paper we work over an algebraically closed field \(k\). Let \(X\) be a smooth connected projective scheme over \(k\) and let \(E\) be a locally free sheaf of rank \(n+1\) on \(X\). Ulrich bundles play an important role in algebraic geometry. They have strong vanishing properties and are closely related to syzygies and projective embeddings. They were introduced by Eisenbud and Schreyer \cite{Eisenbud}, and can be viewed as maximally generated Cohen--Macaulay modules. However, a basic question is still open: does every smooth projective variety admit an Ulrich bundle?

For hypersurfaces in projective space, there is a powerful algebraic description of Ulrich bundles. Earlier work of Backelin-Herzog-Sanders \cite{John}, Van den Bergh \cite{M.van}, and Coskun-Kulkarni-Mustopa \cite{Coskun} shows that Ulrich bundles correspond to representations of Clifford algebras associated to the defining equation. This establishes a strong connection between geometry and noncommutative algebra.

More recently, Ulrich bundles have been studied in the context of coverings and relative situations. In particular, the existence of Ulrich bundles on cyclic coverings of projective spaces has been established using matrix factorization techniques \cite{panmu}, and the notion of \emph{relatively Ulrich bundles} has been introduced to study vector bundles whose direct image under a finite morphism is trivial \cite{jagadish}.

The relative setting is especially important in algebraic geometry because most interesting varieties and sheaves appear in families over a base scheme $  X  $. Relative Ulrich bundles let us understand how these special bundles behave across the entire family, which is essential when studying moduli spaces of sheaves and vector bundles. They also simplify many questions in deformation theory, making it much easier to see how the bundles change when we slightly deform the hypersurface or the base.

The purpose of this paper is to extend these ideas to a broader and more intrinsic geometric setting. In this work, we study Ulrich bundles in a relative geometric setting. Let $X$ be a smooth projective scheme and let $\pi : \mathbb{P}(E) \to X$ be a projective bundle. A relative hypersurface $Y_f \subset \mathbb{P}(E)$ is defined by a section
\[
f \in H^0\!\left(X, \mathrm{Sym}^d E^\vee \otimes L\right),
\]
that is, by a homogeneous form on $E$ with values in a line bundle $L$. This is the natural way to describe hypersurfaces in a projective bundle.

We first develop a general theory of \emph{relatively Ulrich bundles} in this setting. These are vector bundles on $Y$ whose restriction to every fiber is an Ulrich bundle. We show that this definition behaves well and extends the classical notion in a natural way.

To connect this geometric theory with algebra, we then consider the special case where the line bundle $L$ is trivial. In this case, the defining section becomes a usual homogeneous form
\[
f \in H^0\!\left(X, \mathrm{Sym}^d E^\vee\right).
\]
We associate to this data a \emph{generalized Clifford algebra} $C_f$ defined over $X$.

Our main result shows that there is a natural correspondence.
\[
\{\text{Relatively Ulrich bundles on } Y_f\}
\;\longleftrightarrow\;
\{\text{Linear Clifford representations of  } C_f\}.
\]
More precisely, we construct a relatively Ulrich bundle from any representation of $C_f$, and conversely, we recover a representation from any relatively Ulrich bundle. This correspondence is functorial and works well with base change. It generalizes the classical Ulrich-Clifford correspondence to the relative setting.

As a further consequence, the equivalence restricts to a natural bijection between relatively stable Ulrich bundles on \(Y_f\) and the Ulrich-irreducible representations of the generalized Clifford algebra \(C_f\). In other words,
\[
\left\{ 
  \begin{tabular}{@{}c@{}} 
    Relatively stable \\ 
    Ulrich bundles on $Y_f$ 
  \end{tabular} 
\right\}
\longleftrightarrow
\left\{ 
  \begin{tabular}{@{}c@{}} 
    Ulrich-irreducible linear \\ 
    Clifford representations of $C_f$ 
  \end{tabular} 
\right\}.
\]

This refinement shows that the representation-theoretic notion of irreducibility exactly captures geometric stability in the relative setting.

Conceptually, our approach separates two ideas. The notion of relative Ulrich bundle is geometric and works naturally for line bundle-valued forms. On the other hand, the Clifford algebra construction requires the form to take values in the structure sheaf. By passing to the case where $L$ is trivial, we isolate the algebraic structure behind the geometry.

As an application of the above correspondence, we analyze the  complexity of relatively Ulrich bundles on relative hypersurfaces. For relative hyperplanes (degree $d = 1$), the situation is rigid and minimal: the trivial line bundle provides a natural rank one relatively Ulrich bundle, and the resulting theory exhibits the simplest possible behavior. In contrast, for hypersurfaces of degree $d \geq 2$, the geometry becomes significantly more intricate. We show that rigid homological obstructions prevent the existence of the above-mentioned rank one relatively Ulrich bundles.  Using our Clifford-theoretic framework, we nevertheless construct explicit  rank one relatively Ulrich bundles in this setting.

Furthermore, we investigate the wildness of the  relatively Ulrich bundles. We prove that, under suitable assumptions, relative hypersurfaces of degree $d \geq 2$ exhibit \emph{Ulrich wildness}. More precisely, we construct families of indecomposable relatively Ulrich bundles $\{E_N\}$ such that
\[
\dim \operatorname{Ext}^1_Y(E_N, E_N) \longrightarrow \infty \quad \text{as } N \to \infty.
\]
This shows that the category of relatively Ulrich bundles has arbitrarily large deformation spaces, reflecting a highly intricate and uncontrolled representation-theoretic structure.

\section
{Organization of the Paper}

The paper is organized as follows.

In Section~\ref{sec 1}, we recall the basic properties of projective bundles and fix our conventions for their definition and notation, which will be used throughout the paper.

In Section~\ref{sec 2}, we introduce the notion of \emph{relative Ulrich bundles}. We develop their foundational properties, guided by analogies with the classical (absolute) theory, while carefully addressing the additional subtleties that arise in the relative setting.

In Section~\ref{sec 4}, we define a \emph{generalized Clifford algebra} associated to our geometric data and establish its universal property, which plays a central role in the subsequent constructions.

In Section~\ref{sec 5}, we show how a representation of this generalized Clifford algebra naturally gives rise to a relative Ulrich bundle. This construction is made explicit and functorial.

In Section~\ref{sec 6}, we prove the converse direction: starting from a relative Ulrich bundle on the hypersurface, we construct an explicit representation of the generalized Clifford algebra.

In Section~\ref{sec 7}, we extend this correspondence by showing that relatively stable relative Ulrich bundles correspond to irreducible representations of the generalized Clifford algebra.

In Section~\ref{complexity}, we establish that relative hyperplanes attain minimal Ulrich complexity, with the trivial line bundle serving as a natural rank-one solution. For hypersurfaces of degree $d \geq 2$, we demonstrate that rigid homological obstructions strictly preclude such rank-one solutions. Nevertheless, we provide an explicit example of a relative Ulrich bundle on a degree $d \geq 2$ hypersurface using our  approach.

Finally, in Section~\ref{sec 8}, we demonstrate that, under suitable hypotheses, this framework leads to \emph{Ulrich wildness} in the relative setting.

\section{preliminaries}\label{sec 1}

Let $X$ be a smooth  scheme over a field $k$, and let $E$ be a locally free $\mathcal{O}_X$-module of rank $n+1$. We adopt the Grothendieck convention for the projective bundle $\pi \colon \mathbb{P}(E) = \mathbf{Proj}_X(\text{Sym}^\bullet E^\vee) \to X$, equipped with the tautological line bundle $\mathcal{O}_{\mathbb{P}(E)}(1)$ such that $\pi_*\mathcal{O}_{\mathbb{P}(E)}(1) \cong E^\vee$.

Let $H \subset \mathbb{P}(E)$ be a relative hypersurface. As an effective Cartier divisor, $H$ is the zero locus of a non-zero global section of a line bundle $\mathcal{L} \in \text{Pic}(\mathbb{P}(E))$. By the projective bundle formula, the Picard group decomposes as 
\begin{equation}
    \text{Pic}(\mathbb{P}(E)) \cong \pi^*\text{Pic}(X) \oplus \mathbb{Z} \cdot \mathcal{O}_{\mathbb{P}(E)}(1).
\end{equation}
Therefore, $\mathcal{L}$ uniquely factorizes as $\mathcal{L} \cong \mathcal{O}_{\mathbb{P}(E)}(d) \otimes \pi^*L$ for some line bundle $L \in \text{Pic}(X)$ and some integer $d \ge 1$, which we refer to as the relative degree of $H$.

Consequently, $H$ is defined by a global section $s \in H^0(\mathbb{P}(E), \mathcal{O}_{\mathbb{P}(E)}(d) \otimes \pi^*L)$. Via the projection formula and the canonical identification $\pi_*\mathcal{O}_{\mathbb{P}(E)}(d) \cong \text{Sym}^d E^\vee$, pushing forward this section to the base scheme $X$ yields:
\begin{equation}
    H^0(\mathbb{P}(E), \mathcal{O}_{\mathbb{P}(E)}(d) \otimes \pi^*L) \cong H^0(X, \text{Sym}^d E^\vee \otimes L).
\end{equation}
Thus, the defining section $s$ canonically corresponds to a global section $f \in H^0(X, \text{Sym}^d E^\vee \otimes L)$. Geometrically, $f$ is a homogeneous polynomial form of degree $d$ on $E$ taking values in $L$, equivalent to a morphism of $\mathcal{O}_X$-modules:
\begin{equation}
    f \colon \text{Sym}^d E \to L.
\end{equation}
The study of the geometry of $H$ is therefore deeply connected to the algebraic data of the line bundle-valued form $(E, L, f)$ over $X$.

We now state the following lemma, which will play a central role in this paper.

% First, we shall recall some preliminary facts about the projective bundle from \cite{Hart}. Let $X$ be a projective scheme and $E$ be a vector bundle of rank $n+1$ on $X$. Then we can consider the corresponding projective bundle $\pi:\mathbb{P}(E) \rightarrow X$ where on each open set $U$ of $X$, we have $\pi^{-1}(U) \cong \mathbb{P}^{n}_{U}$. Also, we can define a projective bundle $\mathbb{P}(E)=Proj Sym E$, But here as $E$ is locally free so we can define $\mathbb{P}(E)=Proj Sym(E^{\vee})$. Now we state a very significant lemma without proof which is of central use in our paper.\\
\begin{lemma}\label{lemma 1.1}
Let  $\pi:\mathbb{P}(E) \rightarrow X$ be the projective morphism. Then $\pi_{*}\mathscr{O}(l) \cong Sym^{l}(E^{\vee})$ for $l\geq 0$, $\pi_{*}\mathscr{O}(l)=0$ for $l<0$, $R^{i}\pi_{*}\mathscr{O}(l)=0$ for $ 0<i<n$, and for all $l \in \mathbb {Z}$ and $R^{n}\pi_{*}\mathscr{O}(l)=0$ for $l>-n-1$.  
   \end{lemma}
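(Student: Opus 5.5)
The plan is to reduce to the affine case and the classical cohomology of $\mathbb{P}^n$. All statements are local on $X$, since formation of $R^i\pi_*$ commutes with restriction to open subsets. We may therefore cover $X$ by affine opens $U=\operatorname{Spec} A$ over which $E$ is trivial. Then $E^\vee|_U\cong \mathcal{O}_U^{n+1}$ with basis $x_0,\dots,x_n$, and $\pi^{-1}(U)\cong \mathbb{P}^n_A=\operatorname{Proj} A[x_0,\dots,x_n]$ with $\mathcal{O}(l)$ the usual twisting sheaf. Because $U$ is affine, $R^i\pi_*\mathcal{O}(l)|_U$ is the quasi-coherent sheaf associated to the $A$-module $H^i(\mathbb{P}^n_A,\mathcal{O}(l))$.

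Next we compute with the standard Čech computation (Hartshorne III.5.1) for the cover by the $D_+(x_j)$, which gives:
\begin{itemize}
\item $\bigoplus_l H^0(\mathbb{P}^n_A,\mathcal{O}(l))=A[x_0,\dots,x_n]$, so $H^0(\mathcal{O}(l))$ is the degree-$l$ part, free on the monomials of degree $l$, and is $0$ for $l<0$;
\item $H^i(\mathcal{O}(l))=0$ for $0<i<n$ and all $l$;
\item $H^n(\mathcal{O}(l))$ is the degree-$l$ part of the module spanned by $x_0^{-a_0}\cdots x_n^{-a_n}$ with all $a_j\ge1$, so it vanishes unless $l\le -n-1$.
\end{itemize}
These local computations give the three vanishing statements at once.

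For the identification $\pi_*\mathcal{O}(l)\cong \operatorname{Sym}^l E^\vee$ we need a global, not merely local, isomorphism. We will define it globally as the natural map $\operatorname{Sym}^l E^\vee \to \pi_*\mathcal{O}(l)$. It comes from the graded $\mathcal{O}_X$-algebra map $\operatorname{Sym}^\bullet E^\vee\to\bigoplus_l \pi_*\mathcal{O}(l)$ that is built into the $\mathbf{Proj}$ construction, with $E^\vee\to\pi_*\mathcal{O}(1)$ as its degree-one piece. Over each trivializing $U$, this map is exactly the identification $A[x_0,\dots,x_n]_l\cong H^0(\mathbb{P}^n_A,\mathcal{O}(l))$ from the first bullet. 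A map of sheaves that is locally an isomorphism is an isomorphism, so the identification holds on all of $X$.

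The only delicate point is this gluing step: one must check that the local isomorphisms are induced by a single canonical global map rather than by the choice of trivialization. Defining the map through the $\mathbf{Proj}$ construction as above settles this. The rest is the classical cohomology of projective space over an affine base.
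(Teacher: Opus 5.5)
Your proof is correct and is essentially the paper's approach. The paper proves nothing itself and only cites Hartshorne III, Exercise 8.4; your argument is the standard solution of that exercise. It reduces to trivializing affines (using $R^i\pi_*\mathcal{O}(l)|_U = H^i(\pi^{-1}(U),\mathcal{O}(l))^\sim$, valid since $X$ is noetherian), applies the \v{C}ech computation of III.5.1, and globalizes the degree-$l$ identification through the canonical map $\mathrm{Sym}^l E^\vee \to \pi_*\mathcal{O}(l)$.
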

\begin{proof}
See \cite[Exercise 8.4, Page 253]{Hart}
\end{proof}

\begin{lemma}\label{lemma 1.2}
Let $f: X \to Y$ be a projective morphism between Noetherian schemes. Let $F$ 
be a coherent sheaf on $X$, flat over $Y$, such that $H^i(X_y, F_y) = 0$ 
for all $i \geq 0$ and all $y \in Y$. Then $R^i f_* F = 0$ for all $i \geq 0$.
\end{lemma}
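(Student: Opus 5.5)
The natural tool is cohomology and base change, applied by descending induction on $i$. Since $f$ is projective, $Y$ is Noetherian and $F$ is coherent and flat over $Y$, the theorem on cohomology and base change (Hartshorne III.12.11) applies at every point $y\in Y$. It says: if the base change map
\[
\varphi^i(y)\colon R^if_*F\otimes k(y)\to H^i(X_y,F_y)
\]
is surjective, then it is an isomorphism. Moreover, in that situation $R^{i-1}f_*F$ satisfies the criterion that $\varphi^{i-1}(y)$ is surjective if and only if $R^if_*F$ is locally free near $y$. Since the statement is local on $Y$, we may assume $Y=\operatorname{Spec}A$ is affine.

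\textbf{Starting the induction.} Because $f$ is projective, $X$ can be covered by finitely many affine opens. Čech cohomology then shows that $R^if_*F=0$ and $H^i(X_y,F_y)=0$ for all $i>N$, for some $N$ (for instance, the number of opens in the cover). So for $i=N+1$ we have $R^if_*F=0$, which is locally free, and $\varphi^i(y)$ is trivially surjective.

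\textbf{The inductive step.} Suppose that for some $i\ge 0$ the map $\varphi^i(y)$ is surjective for every $y$ and $R^if_*F$ is locally free. By hypothesis $H^i(X_y,F_y)=0$, and since $\varphi^i(y)$ is an isomorphism, $R^if_*F\otimes k(y)=0$ for all $y$. By Nakayama's lemma, a coherent sheaf whose fibres all vanish is zero, so $R^if_*F=0$. The zero sheaf is locally free, so part (b) of the base change theorem gives surjectivity of $\varphi^{i-1}(y)$ for all $y$. Combined with $H^{i-1}(X_y,F_y)=0$, the same argument then shows $R^{i-1}f_*F=0$. Descending from $i=N+1$ down to $i=0$ yields $R^if_*F=0$ for all $i\ge 0$.

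\textbf{Main obstacle.} The only delicate point is keeping the two pieces of information in the base change theorem correctly aligned: surjectivity at level $i$ gives the isomorphism at level $i$, and local freeness at level $i$ gives surjectivity at level $i-1$. Reindexing here is easy to get wrong.

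\textbf{Alternative argument.} One can instead use Grothendieck's complex. There is a bounded complex $K^\bullet$ of finite locally free $A$-modules computing both $R^if_*F$ and $H^i(X_y,F_y)=H^i(K^\bullet\otimes k(y))$. If $K^\bullet\otimes k(y)$ is exact for every $y$, then $K^\bullet$ is exact: split it off from the top using the fact that a surjection of free modules modulo $\mathfrak m$ splits locally. This gives the same conclusion directly.
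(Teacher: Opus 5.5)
Your proof is correct, but it rests on a different theorem from the paper's proof.

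The paper cites Grauert's theorem (Hartshorne, III.12.9). Since $y\mapsto h^i(X_y,F_y)=0$ is constant, $R^if_*F$ is locally free with $R^if_*F\otimes k(y)\cong H^i(X_y,F_y)=0$, so it has rank zero and vanishes. That is a one-line deduction. However, Grauert's theorem as stated in Hartshorne requires $Y$ to be integral (reduced suffices in general), and that is not among the lemma's hypotheses. The omission is harmless in the paper's applications, where the base is smooth and connected.

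You work instead with cohomology and base change (Hartshorne, III.12.11), which needs no reducedness. So your argument proves the lemma in the full generality in which it is stated, and your Grothendieck-complex alternative does the same from first principles.

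You can also shorten your main argument considerably. Because $H^i(X_y,F_y)=0$ for every $i$, each $\varphi^i(y)$ is surjective simply because its target is zero. Part (a) alone then gives $R^if_*F\otimes k(y)=0$ for all $i$ and all $y$, and Nakayama's lemma, together with coherence of $R^if_*F$, finishes the proof. The descending induction, the bound $N$, and part (b) are never needed, so the index-alignment ``main obstacle'' you flag does not actually arise.
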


\begin{proof}
Since $f$ is projective and $F$ is coherent and flat over $Y$, 
the higher direct images $R^i f_* F$ are coherent on $Y$.

For each $i \geq 0$, the function $y \mapsto h^i(X_y, F_y) = 0$ is 
constant. By Grauert's theorem \cite[Corollary 12.9, Chapter III]{Hart}, 
$R^i f_* F$ is locally free with fiber
\[
(R^i f_* F) \otimes k(y) \cong H^i(X_y, F_y) = 0.
\]
Thus $R^i f_* F$ is a locally free sheaf of rank zero, hence zero.
\end{proof}

% \begin{lemma}
% Let $f:X \rightarrow Y$ be a projective morphism of noetherian schemes. Let $F$ be a coherent sheaf on $X$ flat over $Y$ such that on each fiber  $H^{i}(X_{y},F_{y})=0$ for all $i\geq 0$. Then we have  $R^{i}f_{*}F=0$ for all $i\geq 0$. 
% \end{lemma}
% \begin{proof}
%  The morphism is projective and the family is flat over $Y$. So  from Grauert's theorem, we have $R^{i}f_{*}F \otimes k(y)=H^{i}(X_{y},F_{y})$ \cite[Corollary 12.9]{Hart}. Using the hypothesis for each $i$ we have $R^{i}f_{*}F \otimes k(y)=0$ for all $i\geq 0$. Now using Nakayama lemma, we conclude that $R^{i}f_{*}F=0$ for all $0 \leq i \leq n$.   
% \end{proof}

\section{relative Ulrich bundles and some properties}\label{sec 2}

\begin{definition}[Relative Ulrich Bundle]\label{relativeulrich}
Let $\pi : \mathbb{P}(E) \to X$ be a projective bundle of relative dimension $n$, and let $Y_f \subseteq \mathbb{P}(E)$ be a relative hypersurface of degree $d$. A vector bundle $F$ on $Y_f$ is \emph{relatively Ulrich} (with respect to $\pi$) if it is globally generated and 
\[
R^i(\pi|_{Y_f})_*F(-j) = 0 \quad \text{for all } i \geq 0 \text{ and } 1 \leq j \leq n-1,
\]
where $\pi|_{Y_f} : Y_f \to X$ denotes the restricted projection.
\end{definition}

% \begin{definition}[Relative Ulrich Bundle]
% Let $\pi \colon \mathbb{P}(E) \to S$ be a projective bundle of relative dimension $n$, and let $Y_f \subseteq \mathbb{P}(E)$ be a relative hypersurface of degree $d$ defined by a global section $f \in \Gamma(\mathbb{P}(E), \mathcal{O}_{\mathbb{P}(E)}(d))$. A vector bundle $F$ on $Y_f$ is \emph{relatively Ulrich} with respect to $\pi$ if
% \[
% R^i(\pi|_{Y_f})_* F(-j) = 0 \quad \text{for all } i \geq 0 \text{ and } 1 \leq j \leq n-1,
% \]
% where $\pi|_{Y_f} \colon Y_f \to S$ denotes the restricted projection.
% \end{definition}

\begin{remark}\label{rem:fiberwise-ulrich}
Equivalently, $F$ is relatively Ulrich if and only if its restriction $F_x := F|_{(Y_f)_x}$ is an Ulrich bundle on the fiber $(Y_f)_x \subset \mathbb{P}^n$ for every $x \in X$. This follows from the cohomology and base change theorem, since the vanishing conditions are satisfied fiberwise.
\end{remark}

\begin{remark}\label{ClassicalUlrich}
When $X = \operatorname{Spec}(k)$, this definition recovers the classical Ulrich condition: $H^i(Y, F(-j)) = 0$ for all $i \geq 0$ and $1 \leq j \leq \dim(Y)$.
\end{remark}

\begin{theorem}\label{equivalence}
Let \(X\) be a smooth projective scheme over an algebraically closed field \(k\), \(E\) a locally free sheaf of rank \(n+1\) on \(X\), and \(\pi : \mathbb{P}(E) \to X\) the associated projective bundle. Let \(Y_f \subset \mathbb{P}(E)\) be a hypersurface associated to $f$ with a closed immersion \(i : Y_f \hookrightarrow \mathbb{P}(E)\), and let \(F\) be a vector  bundle on \(Y_f\). Suppose there exists a short exact sequence
\[
0 \to K \to \mathcal{O}_{\mathbb{P}(E)}^{\oplus N} \xrightarrow{q} i_* F \to 0
\]
satisfying the fiberwise condition
\[
K|_{\mathbb{P}(E)_x} \simeq \mathcal{O}_{\mathbb{P}(E)_x}(-1)^{\oplus N}
\]
for every point \(x \in X\). Then the twisted syzygy bundle is trivial:
\[
K(1) \simeq \mathcal{O}_{\mathbb{P}(E)}^{\oplus N}.
\]
\end{theorem}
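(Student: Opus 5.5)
The plan is to first show that $G := K(1)$ is pulled back from the base, $G \simeq \pi^*V$ for a rank $N$ bundle $V$ on $X$, and then to identify $V$. The first part is a standard cohomology-and-base-change argument. The second part is the real difficulty, and I am not confident it can be completed from the stated hypotheses alone.

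\emph{Step 1: $K$ is locally free and flat over $X$.} Since $\mathbb{P}(E)$ is smooth and $Y_f$ is an effective Cartier divisor, a vector bundle $F$ on $Y_f$ has $i_*F$ of local projective dimension one over $\mathcal{O}_{\mathbb{P}(E)}$ (local Auslander--Buchsbaum). Hence the kernel $K$ of $q$ is locally free of rank $N$. In particular $K$ and $G=K(1)$ are flat over $X$, since $\pi$ is flat.

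\emph{Step 2: $G \simeq \pi^*V$.} By hypothesis $G|_{\mathbb{P}(E)_x} \simeq \mathcal{O}_{\mathbb{P}^n}^{\oplus N}$ for every $x \in X$. Therefore
\[
h^0(G_x) = N \quad\text{and}\quad h^i(G_x) = 0 \ \text{ for } i>0,
\]
and these are constant in $x$. Grauert's theorem \cite[III.12.9]{Hart}, used as in Lemma~\ref{lemma 1.2}, then gives:
\begin{itemize}
\item $V := \pi_*G$ is locally free of rank $N$;
\item $V \otimes k(x) \simeq H^0(G_x)$, so base change holds;
\item $R^i\pi_*G = 0$ for $i>0$.
\end{itemize}
Now consider the adjunction map $\pi^*V \to G$. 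Restricted to a fiber it becomes the evaluation map $H^0(G_x)\otimes \mathcal{O} \to G_x$, which is an isomorphism because $G_x$ is trivial. By Nakayama, $\pi^*V \to G$ is a surjection between locally free sheaves of the same rank $N$, hence an isomorphism. So $K \simeq \pi^*V(-1)$.

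\emph{Step 3: identify $V$ with $\mathcal{O}_X^{\oplus N}$.} This is the main obstacle. Using Lemma~\ref{lemma 1.1} and the projection formula, I would push forward suitable twists of $0\to \pi^*V(-1)\to \mathcal{O}^{\oplus N}\to i_*F\to 0$ to get information on $V$:
\begin{itemize}
\item \emph{Twist $0$.} Here $\pi_*K = V\otimes \pi_*\mathcal{O}(-1)=0$ and $R^1\pi_*K=0$, so this only yields $(\pi|_{Y_f})_*F \simeq \mathcal{O}_X^{\oplus N}$ and says nothing about $V$.
\item \emph{Twist $1$.} Taking $\pi_*$ gives an injection $V \hookrightarrow (E^\vee)^{\oplus N}$. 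Its cokernel is $(\pi|_{Y_f})_*F(1)$.
\item \emph{Twist $-n$.} Relative duality gives $R^n\pi_*\mathcal{O}(-n-1)\simeq \det E$ (up to the sign convention). Since $R^n\pi_*\mathcal{O}(-n)=0$, this yields
\[
V\otimes \det E \simeq R^{n-1}(\pi|_{Y_f})_*F(-n).
\]
\end{itemize}
My first attempt would be to combine these with the surjection $q$, or with the dual sequence $0\to \mathcal{O}^{\oplus N}\to K^\vee \to \mathcal{E}xt^1(i_*F,\mathcal{O})\to 0$, to produce $N$ global sections of $G=\pi^*V$ that are fiberwise independent, i.e.\ a trivializing frame of $V$.

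However, nothing in the hypotheses visibly forces $V$ to be trivial. A priori, any bundle $V$ with an embedding $\pi^*V(-1)\hookrightarrow \mathcal{O}^{\oplus N}$, i.e.\ $V\hookrightarrow (E^\vee)^{\oplus N}$, whose cokernel is supported on $Y_f$ seems admissible. If Step~3 fails, I would conclude $K(1)\simeq \pi^*V$ with $V=\pi_*(K(1))$, and check whether an additional hypothesis (for example $V\simeq \mathcal{O}_X^{\oplus N}$, or $K(1)$ globally generated by $N$ sections) is needed to obtain the stated triviality.
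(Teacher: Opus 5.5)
Your Steps 1 and 2 are correct and match the paper's Steps 1--6. Grauert gives that $A=\pi_*K(1)$ is locally free of rank $N$ with base change, and Nakayama makes the counit $\pi^*A\to K(1)$ an isomorphism. Your Step 1 also supplies the local freeness of $K$, which the paper uses tacitly.

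Your suspicion about Step 3 is right, and it is not a weakness of your approach: the statement is false, so Step 3 cannot be completed. The paper gets past this point with an inner lemma. It asserts, ``by cohomology and base change for global sections,'' that $H^0(\mathbb{P}(E),K(1))\otimes_k\kappa(x)\cong H^0(\mathbb{P}(E)_x,K(1)|_{\mathbb{P}(E)_x})$, and deduces $h^0(X,A)=N$ and $H^0(X,A)\otimes_k\mathcal{O}_X\cong A$. But base change only identifies the fibre $A\otimes\kappa(x)$ with $H^0$ of the fibre; it says nothing about global sections of $A$. The restriction map $H^0(\mathbb{P}(E),K(1))\otimes\kappa(x)\to H^0(\mathbb{P}(E)_x,K(1)|_{\mathbb{P}(E)_x})$ is the base-change isomorphism composed with evaluation of $A$ at $x$. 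So asserting it is an isomorphism for all $x$ is the same as asserting that $A$ is trivial, which is circular.

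Here is a counterexample. Take the following data:
\begin{itemize}
\item $X$ an elliptic curve and $M$ a nontrivial line bundle with $M^{\otimes 2}\cong\mathcal{O}_X$;
\item $E=M^{\oplus 4}$;
\item $f=y_0y_3-y_1y_2\in H^0(X,\operatorname{Sym}^2E^\vee)$, which makes sense since $\operatorname{Sym}^2E^\vee\cong\operatorname{Sym}^2(k^4)^\vee\otimes M^{-2}\cong\operatorname{Sym}^2(k^4)^\vee\otimes\mathcal{O}_X$. So $f$ is even $\mathcal{O}_X$-valued.
\end{itemize}
Then $\mathbb{P}(E)\cong X\times\mathbb{P}^3$ with $\mathcal{O}_{\mathbb{P}(E)}(1)\cong M^{-1}\boxtimes\mathcal{O}_{\mathbb{P}^3}(1)$, and $Y_f=X\times Q$ with $Q$ a smooth quadric. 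Let $S$ be the Ulrich line bundle on $Q$ with resolution $0\to\mathcal{O}_{\mathbb{P}^3}(-1)^{\oplus 2}\to\mathcal{O}_{\mathbb{P}^3}^{\oplus 2}\to S\to 0$ given by $\begin{pmatrix}y_0&y_1\\ y_2&y_3\end{pmatrix}$. Pulling this resolution back along $\mathrm{pr}_2$ gives a sequence as in the statement:
\begin{itemize}
\item $F=\mathrm{pr}_2^*S$, which is even relatively Ulrich;
\item $K=\mathcal{O}_X\boxtimes\mathcal{O}_{\mathbb{P}^3}(-1)^{\oplus 2}$, which is $\mathcal{O}(-1)^{\oplus 2}$ on every fibre.
\end{itemize}
However, $K(1)\cong(\pi^*M^{-1})^{\oplus 2}$ and $H^0(\mathbb{P}(E),K(1))=H^0(X,M^{-1})^{\oplus 2}=0$. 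Hence $K(1)\not\cong\mathcal{O}_{\mathbb{P}(E)}^{\oplus 2}$, and the paper's claim $h^0(X,A)=N$ fails here.

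If $L$-valued forms are allowed, as in Section~\ref{sec 1}, there is an even simpler example. The relative hyperplane $t_0y_0+t_1y_1=0$ in $\mathbb{P}^1\times\mathbb{P}^n$, with $F=\mathcal{O}_{Y_f}$ and $N=1$, gives $K(1)=\pi^*\mathcal{O}_{\mathbb{P}^1}(-1)$.

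So the correct conclusion is exactly where you stopped: $K(1)\cong\pi^*\pi_*K(1)$. Triviality needs an extra hypothesis, such as the ones you proposed: $\pi_*K(1)\cong\mathcal{O}_X^{\oplus N}$, or $K(1)$ generated by $N$ global sections.
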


\begin{proof}
We prove the result in seven short steps.

\textbf{Step 1.} Tensor the given exact sequence with \(\mathcal{O}_{\mathbb{P}(E)}(1)\). Since \(\mathcal{O}(1)\) is invertible (hence flat), exactness is preserved and we obtain
\[
0 \to K(1) \to \mathcal{O}_{\mathbb{P}(E)}(1)^{\oplus N} \to i_* F(1) \to 0.
\]

\textbf{Step 2.} Restrict to an arbitrary fibre \(\mathbb{P}(E)_x \simeq \mathbb{P}^r_{\kappa(x)}\). The fiberwise hypothesis immediately gives
\[
K(1)|_{\mathbb{P}(E)_x} \simeq \mathcal{O}_{\mathbb{P}(E)_x}^{\oplus N}.
\]
Consequently, \(H^0(\mathbb{P}(E)_x, K(1)|_{\mathbb{P}(E)_x}) \simeq \kappa(x)^N\) and \(H^i(\mathbb{P}(E)_x, K(1)|_{\mathbb{P}(E)_x}) = 0\) for all \(i > 0\). Hence \(R^i\pi_*K(1)=0\) for all \(i>0\).

\textbf{Step 3.} Define \(A := \pi_* K(1)\). The vanishing of all higher direct images \(R^i \pi_* K(1) = 0\) for \(i > 0\) (which follows from Step 2 and proper base change) together with the constant fibrewise dimension \(N\) implies, by the cohomology and base change theorem, that \(A\) is locally free of rank \(N\) on \(X\) and
\begin{equation}\label{basechange}
\alpha_x \colon A \otimes_{\mathcal{O}_X} \kappa(x) \stackrel{\simeq}{\longrightarrow} H^0(\mathbb{P}(E)_x, K(1)|_{\mathbb{P}(E)_x})
\end{equation}
is an isomorphism for every \(x \in X\).

\textbf{Step 4.} Let \(\Phi : \pi^* A \to K(1)\) be the counit of the adjunction \((\pi^*, \pi_*)\), i.e., the evaluation map. Explicitly, on sections it acts by
\[
\sum s_i \otimes f_i \ \mapsto\ \sum f_i \cdot s_i.
\]

\textbf{Step 5.} Restrict \(\Phi\) to any fibre \(\mathbb{P}(E)_x\). Both source and target become the trivial bundle \(\mathcal{O}^{\oplus N}\), and \(\Phi\) restricts to the identity map (it simply evaluates the global sections of the trivial bundle). Hence \(\Phi\) is an isomorphism on every fibre.

\textbf{Step 6.} Let \(K' = \ker \Phi\) and \(C = \coker \Phi\). Both vanish on every fibre. Since they are coherent sheaves, Nakayama's lemma applied stalkwise shows \(K' = C = 0\). Therefore \(\Phi\) is a global isomorphism:
\[
K(1) \simeq \pi^* A.
\]

\begin{lemma}
Let $A := \pi_* K(1)$. Then
\[
\dim_k H^0(X, A) = N.
\]
\end{lemma}

\begin{proof}
Since $R^i \pi_* K(1) = 0$ for all $i > 0$ , the Leray spectral sequence for $\pi : \mathbb{P}(E) \to X$ degenerates at the $E_2$-page, yielding a canonical isomorphism
\begin{equation}\label{Lerray}
H^0(\mathbb{P}(E), K(1)) \cong H^0(X, \pi_* K(1)) = H^0(X, A).
\end{equation}

Let $x \in X$ be a closed point. By the cohomology and base change theorem for global sections, we have a natural isomorphism
\begin{equation}\label{baseglobal}
H^0(\mathbb{P}(E), K(1)) \otimes_k \kappa(x)
\;\cong\;
H^0(\mathbb{P}(E)_x, K(1)|_{\mathbb{P}(E)_x}).
\end{equation}
 The right-hand side is a $\kappa(x)$-vector space of dimension $N$, i.e.,
\[
H^0(\mathbb{P}(E)_x, K(1)|_{\mathbb{P}(E)_x}) \cong \kappa(x)^N.
\]
Therefore,
\[
\dim_{\kappa(x)}\bigl(H^0(X, A) \otimes_k \kappa(x)\bigr) = N.
\]

Since extension of scalars preserves dimension, it follows that
\[
\dim_k H^0(X, A) = N,
\]
as claimed.
\end{proof}

Combining (5) and (6) yields a canonical isomorphism for each closed point $x \in X$:
\begin{equation}\label{gamma}
\gamma_x \colon H^0(X, A) \otimes_k \kappa(x) \stackrel{\simeq}{\longrightarrow} H^0(\mathbb{P}(E)_x, K(1)|_{\mathbb{P}(E)_x}).
\end{equation}

\textbf{Step 7.} It remains to show that $A \simeq \mathcal{O}_X^{\oplus N}$. Consider the canonical evaluation morphism 
\[
ev \colon H^0(X, A) \otimes_k \mathcal{O}_X \longrightarrow A.
\]
Both the source and target are locally free sheaves of rank $N$. We claim that for every closed point $x \in X$, the induced map on fibers,
\[
ev_x \colon H^0(X, A) \otimes_k \kappa(x) \longrightarrow A \otimes_{\mathcal{O}_X} \kappa(x),
\]
is an isomorphism of $\kappa(x)$-vector spaces. By the naturality of base change, we have the following commutative diagram:
\[
\begin{tikzcd}[row sep=large, column sep=huge]
    H^0(X, A) \otimes_k \kappa(x) \arrow[r, "\gamma_x", "\simeq"'] \arrow[d, "ev_x"'] & H^0(\mathbb{P}(E)_x, K(1)|_{\mathbb{P}(E)_x}) \\
    A \otimes_{\mathcal{O}_X} \kappa(x) \arrow[ru, "\alpha_x"', "\simeq"] & 
\end{tikzcd}
\]
Commutativity gives $\gamma_x = \alpha_x \circ ev_x$. Since both $\gamma_x$ and $\alpha_x$ are isomorphisms, it follows immediately that $ev_x$ is an isomorphism. Because $ev$ is a morphism of locally free sheaves that is an isomorphism on all fibers, Nakayama's Lemma implies that $ev$ is a global isomorphism, yielding $A \simeq \mathcal{O}_X^{\oplus N}$. 

Combining this with Step 6, we conclude:
\[
K(1) \simeq \pi^*A \simeq \pi^*(\mathcal{O}_X^{\oplus N}) \simeq \mathcal{O}_{\mathbb{P}(E)}^{\oplus N}.
\]

\end{proof}

% \textbf{Step 7.} It remains to show \(A \simeq \mathcal{O}_X^{\oplus N}\). Consider the evaluation map
% \[
% \mathrm{Ev} : H^0(X,A) \otimes_k \mathcal{O}_X \to A, \qquad \sigma \otimes f \mapsto f \cdot \sigma.
% \]
% This is a morphism between locally free sheaves of rank \(N\). We prove that for every point \(x\in X\) the fiber map
% \[
% \mathrm{Ev_x}:H^0(X,A)\otimes_k\kappa(x)\to A\otimes_{\mathcal{O}_X}\kappa(x)
% \]
% is an isomorphism of \(\kappa(x)\)-vector spaces.  By the naturality of base change   diagram
% \[
% \begin{tikzcd}
% H^0(X,A)\otimes_k\kappa(x)\arrow[r,"\gamma_x"]\arrow[d,"\mathrm{Ev_x}"] & H^0(\mathbb{P}(E)_x,K(1)|_{P_x}) \\
% A\otimes_{\mathcal{O}_X}\kappa(x)\arrow[ur,"\alpha_x"']
% \end{tikzcd}
% \]
% commutes, so \(\gamma_x=\alpha_x\circ \mathrm{Ev_x}\), hence $\mathrm{Ev_x}$ is an isomorphism. This holds for every point \(x\in X\). The kernel and cokernel of $\mathrm{Ev}$ therefore vanish by Nakayama's lemma, so \(\mathrm{Ev}\) is a global isomorphism and \(A \simeq \mathcal{O}_X^{\oplus N}\).

% Combining Steps 6 and 7,
% \[
% K(1) \simeq \pi^* A \simeq \pi^*(\mathcal{O}_X^{\oplus N}) \simeq \mathcal{O}_{\mathbb{P}(E)}^{\oplus N}.
% \]
% \end{proof}

\begin{proposition}\label{reso}
Let $\pi \colon \mathbb{P}(E) \to X$ be a projective bundle of relative dimension $n$, 
and let $Y_f \subset \mathbb{P}(E)$ be a relative hypersurface of degree $d$. 
Let $F$ be a rank $r$ vector bundle on $Y_f$. If there exists a short exact sequence
\[
0 \longrightarrow \mathcal{O}_{\mathbb{P}(E)}(-1)^{\oplus N} 
\longrightarrow \mathcal{O}_{\mathbb{P}(E)}^{\oplus N} 
\longrightarrow i_*F \longrightarrow 0,
\]
then $F$ is relatively Ulrich.
\end{proposition}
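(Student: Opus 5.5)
Twist the given resolution by $\mathcal{O}_{\mathbb{P}(E)}(-j)$ for $1\le j\le n-1$ and push it forward along $\pi$. Since $i$ is a closed immersion, $i_*$ is exact and $R^q(\pi|_{Y_f})_*(F(-j)) \cong R^q\pi_*(i_*F(-j))$, using the projection formula $i_*(F\otimes i^*\mathcal{O}(-j)) \cong i_*F\otimes\mathcal{O}(-j)$. The twisted sequence is
\[
0 \to \mathcal{O}_{\mathbb{P}(E)}(-j-1)^{\oplus N} \to \mathcal{O}_{\mathbb{P}(E)}(-j)^{\oplus N} \to i_*F(-j) \to 0 .
\]
By Lemma~\ref{lemma 1.1}, for $1\le j\le n-1$ both $-j$ and $-j-1$ lie in $[-n,-1]$. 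Hence $\pi_*$ vanishes (negative twist), $R^q\pi_*$ vanishes for $0<q<n$, and $R^n\pi_*$ vanishes because $-j-1\ge -n>-n-1$. All direct images of the two left terms are therefore zero. The long exact sequence of higher direct images then gives $R^q\pi_*(i_*F(-j))=0$ for all $q\ge 0$, which is the vanishing required in Definition~\ref{relativeulrich}.

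Global generation: the surjection $\mathcal{O}_{\mathbb{P}(E)}^{\oplus N}\to i_*F$ restricts along $i$ (apply $i^*$, which is right exact, and use $i^*i_*F\cong F$) to a surjection $\mathcal{O}_{Y_f}^{\oplus N}\to F$. If ``globally generated'' is read relatively, i.e.\ $(\pi|_{Y_f})^*(\pi|_{Y_f})_*F\to F$ surjective, this already suffices, since the surjection factors through it. For the absolute reading, the $N$ sections are the images of the constant sections $e_1,\dots,e_N$. These are global sections of $\mathcal{O}_{\mathbb{P}(E)}^{\oplus N}$, so their images are global sections of $F$ generating it at every point. Either way $F$ is globally generated.

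The main care point is bookkeeping rather than a genuine obstacle. One must check that the range $1\le j\le n-1$ in the definition keeps $-j-1\ge -n$, so that the top direct image $R^n\pi_*\mathcal{O}(-j-1)$ vanishes. One must also justify identifying the direct images on $Y_f$ with those of $i_*F$ on $\mathbb{P}(E)$, which holds because $i$ is affine. As a consistency check, one can instead restrict to fibres and use Remark~\ref{rem:fiberwise-ulrich}. On each fibre the sequence stays exact, since $i_*F$ is flat over $X$ as a quotient with a locally free resolution of length one that restricts injectively fibrewise (it is a nonzero determinant map). The fibrewise statement is then the classical Eisenbud--Schreyer linear-resolution criterion for Ulrich sheaves.
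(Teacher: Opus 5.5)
Your proposal is correct and follows the paper's own argument. You twist the resolution by $\mathcal{O}_{\mathbb{P}(E)}(-j)$, use Lemma~\ref{lemma 1.1} to see that all direct images of $\mathcal{O}(-j)$ and $\mathcal{O}(-j-1)$ vanish for $1\le j\le n-1$, read off $R^q\pi_*(i_*F(-j))=0$ from the long exact sequence, and obtain global generation from the surjection $\mathcal{O}_{\mathbb{P}(E)}^{\oplus N}\to i_*F$. You make explicit two points the paper leaves implicit, namely $R^q(\pi|_{Y_f})_*=R^q\pi_*\circ i_*$ because $i$ is affine, and the generating sections; the fibrewise consistency check is not needed.
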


\begin{proof}
Global generation follows from the short exact sequence. Now twist the resolution by $\mathcal{O}_{\mathbb{P}(E)}(-j)$ for $1 \leq j \leq n-1$ 
and apply $R^i\pi_*$. By the projective bundle formula, 
$R^i\pi_*\mathcal{O}(-j) = R^i\pi_*\mathcal{O}(-j-1) = 0$ for all $i \geq 0$ 
in this range. The associated long exact sequence yields $R^i\pi_*(i_*F(-j)) = 0$. 
Since $R^i\pi_*(i_*F(-j)) = R^i(\pi_{Y_f})_*F(-j)$, we conclude $F$ is 
relatively Ulrich.
\end{proof}

% \begin{proposition}
% Let $\pi \colon \mathbb{P}(E) \to X$ be a projective bundle and let $Y \subset \mathbb{P}(E)$ be a relative hypersurface of 
% degree $d$. Let $F$ be a rank $r$ vector bundle on $Y$. Then $F$ is relatively Ulrich if and only if there exists a 
% short exact sequence
% \[
% 0 \longrightarrow \mathcal{O}_{\mathbb{P}(E)}(-1)^{\oplus rd} 
% \longrightarrow \mathcal{O}_{\mathbb{P}(E)}^{\oplus rd} 
% \longrightarrow i_*F \longrightarrow 0.
% \]
% \end{proposition}
% \begin{proof}

\begin{proposition}
Let $\pi : \mathbb{P}(E) \to X$ be a projective bundle and let 
$Y \subset \mathbb{P}(E)$ be a relative hypersurface of degree $d$. 
Let $F$ be a vector bundle of rank $r$ on $Y$. Then $F$ is relatively Ulrich if and only if there exists a short exact sequence
\[
0 \longrightarrow \mathcal{O}_{\mathbb{P}(E)}(-1)^{\oplus rd}
\longrightarrow \mathcal{O}_{\mathbb{P}(E)}^{\oplus rd}
\longrightarrow i_*F \longrightarrow 0.
\]
\end{proposition}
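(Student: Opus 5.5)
The "if" direction is Proposition~\ref{reso} with $N=rd$, so only the converse needs work. Let $F$ be relatively Ulrich of rank $r$ and put $p=\pi|_{Y_f}$. The plan is to produce a surjection $\mathcal{O}_{\mathbb{P}(E)}^{\oplus rd}\to i_*F$, identify its kernel fibrewise with $\mathcal{O}(-1)^{\oplus rd}$, and then apply Theorem~\ref{equivalence}.

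\textbf{Step 1: fibrewise input.} By Remark~\ref{rem:fiberwise-ulrich}, each $F_x$ is an Ulrich bundle on the degree $d$ hypersurface $(Y_f)_x\subset\mathbb{P}^n_{\kappa(x)}$. The classical theory (Eisenbud--Schreyer) then gives three facts. First, $h^0(F_x)=rd$ and $h^i(F_x)=0$ for $i>0$. Second, $F_x$ is globally generated. Third, it has a linear resolution $0\to\mathcal{O}(-1)^{rd}\to\mathcal{O}^{rd}\to F_x\to 0$ on $\mathbb{P}^n$, in which the map $\mathcal{O}^{rd}\to F_x$ is the evaluation on $H^0(F_x)$.

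\textbf{Step 2: build the global surjection.} Since $p$ is flat and $F$ is flat over $X$, cohomology and base change (as in Lemma~\ref{lemma 1.2} and Step~3 of Theorem~\ref{equivalence}) shows that $V:=p_*F$ is locally free of rank $rd$, that $V\otimes\kappa(x)\cong H^0(F_x)$, and that $R^ip_*F=0$ for $i>0$. The counit $p^*V\to F$ is surjective on every fibre by the global generation of Step~1. Hence it is surjective, and we get $0\to K\to \pi^*V\to i_*F\to 0$ on $\mathbb{P}(E)$.

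\textbf{Step 3: identify the kernel fibrewise.} Since $i_*F$ is flat over $X$, the sequence stays exact after restriction to fibres. On the fibre over $x$ the map is the evaluation of Step~1, so $K|_{\mathbb{P}(E)_x}\cong\mathcal{O}(-1)^{rd}$.

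\textbf{Step 4: trivialize $V$, the main obstacle.} To land exactly on the form required by Theorem~\ref{equivalence}, we need $\pi^*V\cong\mathcal{O}^{\oplus rd}$. Here I would use the paper's standing convention that relative global generation means a surjection from a trivial bundle. Then I would run the argument of Theorem~\ref{equivalence} with $\pi^*V$ in place of $\mathcal{O}^{\oplus N}$. Steps 1--6 of that proof give $K(1)\cong\pi^*A$ with $A=\pi_*K(1)$, and hence a resolution $0\to\pi^*A(-1)\to\pi^*V\to i_*F\to0$.

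To obtain literally trivial terms, I would check that the hypotheses give $V\cong\mathcal{O}_X^{rd}$. The idea is to choose $rd$ global sections of $F$ that restrict to a basis of every $H^0(F_x)$. That is the same as requiring the evaluation $H^0(X,V)\otimes\mathcal{O}_X\to V$ to be an isomorphism, which is the argument of Step~7 of Theorem~\ref{equivalence} applied to $V$.

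I expect this trivialization to be the genuine gap. For an arbitrary base $X$, the sheaf $p_*F$ need not be trivial: twisting $F$ by $p^*M$ for $M\in\operatorname{Pic}X$ preserves the fibrewise Ulrich property but changes $V$ to $V\otimes M$. The proof will therefore have to rely on the global generation hypothesis in Definition~\ref{relativeulrich}, interpreted as generation by $rd$ global sections. Alternatively, the statement should be read with $\pi^*V$ in place of $\mathcal{O}^{rd}$.

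With that granted, Theorem~\ref{equivalence} gives $K\cong\mathcal{O}_{\mathbb{P}(E)}(-1)^{\oplus rd}$, which completes the proof.
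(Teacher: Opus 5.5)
Your diagnosis in Step~4 is correct, and it is more than a gap in your argument: the statement as written is false, and the paper's proof passes over exactly this point. Your twisting example becomes a counterexample once $M$ is globally generated, so that the twist still satisfies Definition~\ref{relativeulrich} by Proposition~\ref{Wild}. Concretely, let $X=\mathbb{P}^1$, $E=\mathcal{O}_X^{\oplus 3}$, $Y$ the relative hyperplane $\{y_0=0\}$, and $F=\mathcal{O}_Y\otimes p^*\mathcal{O}_{\mathbb{P}^1}(1)$. This $F$ is relatively Ulrich with $r=d=1$. Pushing forward a sequence $0\to\mathcal{O}(-1)\to\mathcal{O}\to i_*F\to 0$ using Lemma~\ref{lemma 1.1} would give $p_*F\cong\mathcal{O}_X$, whereas in fact $p_*F\cong\mathcal{O}_{\mathbb{P}^1}(1)$. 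The same example also contradicts the paper's later claims that $(\pi_{Y_f})_*F$ is trivial and that $h^0(Y_f,F)=dr$.

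In the paper's proof, the error is the assertion, made from fibrewise information, that $S=h^0(\mathbb{P}(E),i_*F)$ equals $rd$ and that the global evaluation restricts to the fibrewise one. In fact $S=h^0(X,p_*F)$, which is $2$ here. The subsequent appeal to Theorem~\ref{equivalence} rests on the lemma inside its proof, which uses a nonexistent base change isomorphism $H^0(\mathbb{P}(E),K(1))\otimes_k\kappa(x)\cong H^0(\mathbb{P}(E)_x,K(1)|_{\mathbb{P}(E)_x})$. You replaced $H^0(i_*F)\otimes\mathcal{O}$ by the relative evaluation $p^*p_*F\to F$, and that is exactly what makes your Steps~1--3 valid.

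Your proposed repair, however, fails in two places.

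\textbf{Global generation.} It cannot force $V$ to be trivial, since a twist by a globally generated $M$ remains globally generated. Generation by exactly $rd$ sections would be a new hypothesis.

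\textbf{The final step.} Your last sentence is wrong even if $V\cong\mathcal{O}_X^{\oplus rd}$ is granted. You also need $A=\pi_*K(1)$ to be trivial, and Theorem~\ref{equivalence} cannot provide this because it is false. Here is a counterexample with $L$ trivial and $f$ fibrewise smooth. Take $V=\mathcal{O}_X^{\oplus 2}$ and $W=M\oplus M^{-1}$ with $M\not\cong\mathcal{O}_X$. Let $E=\mathcal{H}om(W,V)$ and $f=\det$, which is $\mathcal{O}_X$-valued because $\det W\cong\det V$. Let $\alpha\colon\pi^*W\otimes\mathcal{O}(-1)\to\pi^*V$ be the map induced by the tautological line.
\begin{enumerate}
\item $F=\operatorname{coker}\alpha$ is a relatively Ulrich line bundle, generated by two sections, with $p_*F\cong\mathcal{O}_X^{\oplus 2}$.
\item $\operatorname{Hom}$ and $\operatorname{Ext}^1$ from $\mathcal{O}^{\oplus 2}$ to $\pi^*W\otimes\mathcal{O}(-1)$ vanish, and $F\not\cong\mathcal{O}_{Y_f}$. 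Hence every surjection $\mathcal{O}^{\oplus 2}\to i_*F$ is the evaluation composed with some $B\in\mathrm{GL}_2(k)$.
\item Its kernel is therefore $\pi^*W\otimes\mathcal{O}(-1)\not\cong\mathcal{O}(-1)^{\oplus 2}$.
\end{enumerate}
With an $L$-valued form as in Section~\ref{sec 2}, there is an even simpler counterexample: a relative hyperplane cut out by a section of $\mathcal{O}(1)\otimes\pi^*L$ with $L$ nontrivial, where $K(1)=\pi^*L^{-1}$.

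What your Steps~1--3, together with Steps~1--6 of Theorem~\ref{equivalence}, actually prove is the correct form of the statement. A relatively Ulrich $F$ has a resolution $0\to\pi^*A\otimes\mathcal{O}(-1)\to\pi^*V\to i_*F\to 0$ with $V=p_*F$ and $A$ locally free of rank $rd$. Conversely, such a resolution makes $F$ fibrewise Ulrich, and globally generated when $V$ is. The displayed sequence exists exactly when both $V$ and $A$ are trivial, and that has to be assumed, not derived.
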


\begin{proof}
We prove only the forward direction. The converse follows from Proposition~\ref{reso}.

Assume that $F$ is a relatively Ulrich bundle. Since $i : Y \hookrightarrow \mathbb{P}(E)$ 
is a closed immersion and pushforward along a closed immersion is exact, it follows that 
$i_*F$ is a globally generated sheaf on $\mathbb{P}(E)$. Therefore, there exists a surjective map
\[
q : \mathcal{O}_{\mathbb{P}(E)}^{\oplus S} \longrightarrow i_*F \longrightarrow 0,
\]
where $S = h^0(\mathbb{P}(E), i_*F)$.

Let $K := \ker(q)$. Then we obtain a short exact sequence
\begin{equation}\label{kernel-seq}
0 \longrightarrow K \longrightarrow \mathcal{O}_{\mathbb{P}(E)}^{\oplus S}
\longrightarrow i_*F \longrightarrow 0.
\end{equation}

Now restrict this sequence to a fiber $\mathbb{P}(E)_x$. Using Remark~\ref{ClassicalUlrich} and 
\cite[Proposition~2]{Arnuad}, we obtain $S = rd$ and
\[
K|_{\mathbb{P}(E)_x} \cong \mathcal{O}_{\mathbb{P}(E)_x}(-1)^{\oplus rd}.
\]

Finally, by Theorem~\ref{equivalence}, this fiberwise description determines $K$ globally, and we obtain
\[
K \cong \mathcal{O}_{\mathbb{P}(E)}(-1)^{\oplus rd}.
\]

Substituting this into \eqref{kernel-seq}, we obtain the desired exact sequence
\[
0 \longrightarrow \mathcal{O}_{\mathbb{P}(E)}(-1)^{\oplus rd}
\longrightarrow \mathcal{O}_{\mathbb{P}(E)}^{\oplus rd}
\longrightarrow i_*F \longrightarrow 0.
\]
\end{proof}

\begin{definition}
Let $\pi \colon Y \to X$ be a projective morphism with relative dimension $N$ and relatively ample line bundle $\OO_Y(1)$. A vector bundle $F$ on $Y$ is \textit{relative ACM} over $X$ if
\[
R^q \pi_* (F(k)) = 0 \quad \text{for all } k \in \mathbb{Z} \text{ and } 0 < q < N
.
\]
In particular, for a hypersurface $Y_f \subset \mathbb{P}(E)$ with $\operatorname{rank}(E) = n+1$, the relative dimension is $N = n-1$, so the condition becomes
\[
R^q \pi_* (F(k)) = 0 \quad \text{for all } k \in \mathbb{Z} \text{ and } 0 < q < n-1.
\]
\end{definition}

\begin{proposition}
Every relatively Ulrich bundle $F$ on $Y_f \subset \mathbb{P}(E)$ is relative ACM over $X$.
\end{proposition}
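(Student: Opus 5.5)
The plan is to use the linear resolution supplied by the preceding Proposition. Since $F$ is relatively Ulrich of rank $r$, there is a short exact sequence
\[
0 \longrightarrow \mathcal{O}_{\mathbb{P}(E)}(-1)^{\oplus rd} \longrightarrow \mathcal{O}_{\mathbb{P}(E)}^{\oplus rd} \longrightarrow i_*F \longrightarrow 0 .
\]
Twisting by $\mathcal{O}_{\mathbb{P}(E)}(k)$ for arbitrary $k\in\mathbb{Z}$ keeps the sequence exact, because the twist is invertible. We then apply $R\pi_*$. Since $i$ is a closed immersion, hence affine, we have $R^q(\pi|_{Y_f})_*(F(k)) \cong R^q\pi_*(i_*F(k))$. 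The long exact sequence therefore gives, for each $q$,
\[
R^q\pi_*\mathcal{O}(k)^{\oplus rd} \longrightarrow R^q(\pi|_{Y_f})_*F(k) \longrightarrow R^{q+1}\pi_*\mathcal{O}(k-1)^{\oplus rd}.
\]

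By Lemma~\ref{lemma 1.1}, $R^q\pi_*\mathcal{O}(l)=0$ for all $l\in\mathbb{Z}$ whenever $0<q<n$. Now take $0<q<n-1$. Then $0<q<n$, so the left term vanishes. Also $1<q+1<n$, so the right term vanishes. Hence $R^q(\pi|_{Y_f})_*(F(k))=0$ for every $k\in\mathbb{Z}$ and every $0<q<n-1$, which is exactly the relative ACM condition for the relative dimension $N=n-1$.

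The main point needing care is the index bookkeeping at the top of the range. The right-hand term involves $R^{q+1}\pi_*$, and this lands in the vanishing range of Lemma~\ref{lemma 1.1} only because $q+1\le n-1<n$. This is why the ACM range stops at $n-2$ and does not reach $n-1$: at $q=n-1$ the term $R^n\pi_*\mathcal{O}(k-1)$ is nonzero for $k-1\le -n-1$, so the argument cannot be pushed further, and the statement does not ask it to be. One should also check the identification $R^q\pi_*(i_*F(k)) = R^q(\pi|_{Y_f})_*F(k)$, which follows from the Leray spectral sequence since $R^{>0}i_*=0$. Here $F(k)$ means the twist by $i^*\mathcal{O}_{\mathbb{P}(E)}(k)$, so that $i_*(F(k)) \cong (i_*F)(k)$ by the projection formula.

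Alternatively, one could argue fiberwise. By Remark~\ref{rem:fiberwise-ulrich}, each $F_x$ is Ulrich on $(Y_f)_x\subset\mathbb{P}^n$, and Ulrich bundles are ACM, so $H^q((Y_f)_x, F_x(k))=0$ for $0<q<n-1$. One would then pass to direct images via Grauert and Lemma~\ref{lemma 1.2}. However, this requires controlling constancy of $h^q$ and base change, which is more delicate. So I would use the resolution argument above as the actual proof.
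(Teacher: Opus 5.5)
Your argument is the paper's own proof: twist the linear resolution by $\mathcal{O}_{\mathbb{P}(E)}(k)$, push forward, and kill both outer terms with Lemma~\ref{lemma 1.1}. Your index bookkeeping is cleaner than the paper's. The problem is your first sentence, which you inherit from the paper. A resolution
\[
0 \to \mathcal{O}_{\mathbb{P}(E)}(-1)^{\oplus rd} \to \mathcal{O}_{\mathbb{P}(E)}^{\oplus rd} \to i_*F \to 0
\]
by \emph{trivial} bundles does not exist for every relatively Ulrich $F$ in the sense of Definition~\ref{relativeulrich}. Pushing such a resolution forward forces $(\pi|_{Y_f})_*F \cong \mathcal{O}_X^{\oplus rd}$, and this fails in general. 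Take $X=\mathbb{P}^1$, $E=\mathcal{O}_X^{\oplus(n+1)}$ with $n\ge 2$, $Y_f=X\times H$ for a hyperplane $H\subset\mathbb{P}^n$, and $F=\mathcal{O}_{Y_f}\otimes\pi^*\mathcal{O}_{\mathbb{P}^1}(1)$. By Propositions~\ref{prop:trivial-ulrich-converse} and~\ref{Wild}, $F$ is relatively Ulrich with $r=d=1$. Yet $(\pi|_{Y_f})_*F\cong\mathcal{O}_{\mathbb{P}^1}(1)$, and $h^0(Y_f,F)=2\neq rd$. The forward direction of the preceding proposition breaks exactly where its proof sets $S=h^0(\mathbb{P}(E),i_*F)$ and then reads off $S=rd$ from a single fibre. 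That step conflates global sections with fibrewise ones.

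The statement itself is true, and your argument can be repaired in either of two ways.

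\textbf{Twisted resolution.} What one can actually prove is a resolution $0\to\pi^*W\otimes\mathcal{O}_{\mathbb{P}(E)}(-1)\to\pi^*V\to i_*F\to 0$, where $V=(\pi|_{Y_f})_*F$ and $W$ is locally free of rank $rd$ on $X$. To get it, use the evaluation map $\pi^*V\to i_*F$, which is fibrewise the minimal Ulrich resolution. Then apply Steps~1--6 of Theorem~\ref{equivalence} to its kernel. By the projection formula, $R^q\pi_*(\pi^*V\otimes\mathcal{O}(k))\cong V\otimes R^q\pi_*\mathcal{O}(k)$, so your long exact sequence argument goes through verbatim.

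\textbf{Fibrewise route.} The route you set aside is not delicate at all. Each $F_x$ is Ulrich by Remark~\ref{rem:fiberwise-ulrich}, hence ACM, so $H^q((Y_f)_x,F_x(k))=0$ for $0<q<n-1$ and every $x$. Since $F(k)$ is $X$-flat, the base-change map $R^q(\pi|_{Y_f})_*F(k)\otimes\kappa(x)\to H^q((Y_f)_x,F_x(k))$ is trivially surjective. By the cohomology and base change theorem it is then an isomorphism. So every fibre of $R^q(\pi|_{Y_f})_*F(k)$ vanishes, and Nakayama gives $R^q(\pi|_{Y_f})_*F(k)=0$. No constancy of $h^q$ is needed, only vanishing in the single degree $q$. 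For the same reason, Lemma~\ref{lemma 1.2} is the wrong tool here: its hypothesis requires vanishing in all degrees.
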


\begin{proof}
Twist the resolution by $\OO_{\mathbb{P}(E)}(k)$ for any $k \in \mathbb{Z}$:
\[
0 \to \OO_{\mathbb{P}(E)}(k-1)^{\oplus dr} \to \OO_{\mathbb{P}(E)}(k)^{\oplus dr} \to i_* F(k) \to 0.
\]
Apply $R^q \pi_*$ to obtain the long exact sequence. For projective bundles,
\[
R^q \pi_* \OO_{\mathbb{P}(E)}(m) = 0 \quad \text{for } 0 < q < n \text{ and all } m \in \mathbb{Z}.
\]
Thus $R^q \pi_*$ vanishes for both terms in the resolution when $0 < q < n$. The long exact sequence gives
\[
0 \longrightarrow 0 \longrightarrow R^q \pi_* F(k) \longrightarrow 0
\]
for $0 < q < n-1$. Hence $R^q \pi_* F(k) = 0$ in this range. Since $Y_f$ has relative dimension $n-1$, this is the relative ACM condition.
\end{proof}

In \cite[proposition 2]{Arnuad} it is shown that for a finite linear projection push forward of Ulrich bundle is trivial. We show that even for projective morphism $\pi$ this still holds good in our situation.

\begin{proposition}
The pushforward $(\pi_{Y_f})_*F \cong \mathcal{O}_X^{\oplus N}$ is trivial.
\end{proposition}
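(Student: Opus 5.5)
We take $F$ to be a relatively Ulrich bundle of rank $r$ on $Y_f$. The plan is to apply $\pi_*$ to the resolution from the preceding proposition, pushing forward along $i$, and to read off $(\pi_{Y_f})_*F$. Here $N = rd$. That proposition gives
\[
0 \longrightarrow \mathcal{O}_{\mathbb{P}(E)}(-1)^{\oplus rd} \longrightarrow \mathcal{O}_{\mathbb{P}(E)}^{\oplus rd} \longrightarrow i_*F \longrightarrow 0.
\]
Since $i$ is a closed immersion, $i_*$ is exact and $\pi_* i_* = (\pi_{Y_f})_*$. Applying $R\pi_*$ therefore yields the long exact sequence
\[
0 \to \pi_*\mathcal{O}(-1)^{\oplus rd} \to \pi_*\mathcal{O}^{\oplus rd} \to (\pi_{Y_f})_*F \to R^1\pi_*\mathcal{O}(-1)^{\oplus rd} \to \cdots .
\]

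Next we evaluate the terms with Lemma~\ref{lemma 1.1}. We have $\pi_*\mathcal{O}(-1)=0$, since $-1<0$. When $n\ge 2$ we also have $R^1\pi_*\mathcal{O}(-1)=0$, since $0<1<n$. When $n=1$ we use instead that $R^n\pi_*\mathcal{O}(l)=0$ for $l>-n-1=-2$, which covers $l=-1$. Finally $\pi_*\mathcal{O}_{\mathbb{P}(E)}=\mathrm{Sym}^0E^\vee=\mathcal{O}_X$. The long exact sequence then collapses to an isomorphism
\[
\mathcal{O}_X^{\oplus rd}\cong \pi_*\mathcal{O}_{\mathbb{P}(E)}^{\oplus rd}\xrightarrow{\ \sim\ }(\pi_{Y_f})_*F,
\]
which is the desired triviality with $N=rd$. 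The same sequence also shows that $R^i(\pi_{Y_f})_*F=0$ for $i>0$, which fits with the fiberwise Ulrich picture.

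The main point requiring care is the input from the previous proposition. Its forward direction uses Theorem~\ref{equivalence}, which identifies $K(1)$ with $\pi^*A$ and $A$ with a trivial bundle. If one prefers not to rely on that global triviality, there is a fallback that does not need it. First, since $F$ is flat over $X$ and each fiber $F_x$ is Ulrich, \cite[Proposition~2]{Arnuad} gives $h^0(F_x)=rd$ and $h^i(F_x)=0$ for $i>0$. Grauert's theorem (as in Lemma~\ref{lemma 1.2}) then makes $(\pi_{Y_f})_*F$ locally free of rank $rd$ with base change. Second, one must show that the evaluation map $H^0(Y_f,F)\otimes\mathcal{O}_X\to(\pi_{Y_f})_*F$ is a fiberwise isomorphism. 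This step is the real obstacle: without the global trivialization of the syzygy, $(\pi_{Y_f})_*F$ is only locally free, not a priori trivial. For this reason I would rely on the resolution route above, which reduces the triviality to the computation $\pi_*\mathcal{O}_{\mathbb{P}(E)}=\mathcal{O}_X$.
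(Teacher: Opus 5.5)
Your computation is the paper's proof: apply $\pi_*$ to $0\to\mathcal{O}_{\mathbb{P}(E)}(-1)^{\oplus N}\to\mathcal{O}_{\mathbb{P}(E)}^{\oplus N}\to i_*F\to 0$ and use Lemma~\ref{lemma 1.1}. Your separate check of $n=1$ and the by-product $R^i(\pi_{Y_f})_*F=0$ for $i>0$ are both fine. Whenever $i_*F$ really has such a resolution with \emph{trivial} terms, your argument is complete. That is the hypothesis of Proposition~\ref{reso}, and it holds for instance for the bundles $F_\rho$ of Section~\ref{sec 5}.

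The gap is the step you flagged and then relied on anyway: getting that resolution for an arbitrary relatively Ulrich $F$ from the forward direction of the preceding proposition. That input is false, and so is the statement in that generality. Your fallback analysis already found what is true: $(\pi_{Y_f})_*F$ is locally free of rank $rd$ with base change, and in general one cannot say more.

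Here is a counterexample. Take $X=\mathbb{P}^1$, $E=\mathcal{O}_X^{\oplus(n+1)}$ with $n\ge 2$, and the constant linear form $f=y_0$, so $Y_f\cong\mathbb{P}^1\times\mathbb{P}^{n-1}$. Let $F=(\pi_{Y_f})^*\mathcal{O}_{\mathbb{P}^1}(1)$.
\begin{itemize}
\item $F$ is globally generated.
\item For $1\le j\le n-1$ we have $R^i(\pi_{Y_f})_*F(-j)\cong\mathcal{O}_{\mathbb{P}^1}(1)\otimes H^i(\mathbb{P}^{n-1},\mathcal{O}(-j))=0$.
\item So $F$ is relatively Ulrich; it is just Proposition~\ref{Wild} applied to $\mathcal{O}_{Y_f}$ and $G=\mathcal{O}_{\mathbb{P}^1}(1)$.
\item Yet $(\pi_{Y_f})_*F\cong\mathcal{O}_{\mathbb{P}^1}(1)$, which is not trivial.
\end{itemize}

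In the preceding proposition, the argument breaks at the claim $S=h^0(i_*F)=rd$; here $S=2$ while $rd=1$. Separately, Step~7 of Theorem~\ref{equivalence} uses an unjustified ``base change for global sections'' $H^0(\mathbb{P}(E),K(1))\otimes_k\kappa(x)\cong H^0(\mathbb{P}(E)_x,K(1)|_{\mathbb{P}(E)_x})$. This is the same evaluation-map gap you identified, with $A=\pi_*K(1)$ in place of $(\pi_{Y_f})_*F$.

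So that obstacle cannot be overcome. The paper's proof, which also just starts from ``the linear resolution'', carries the same tacit assumption. What your computation actually proves is the statement with the resolution $0\to\mathcal{O}_{\mathbb{P}(E)}(-1)^{\oplus N}\to\mathcal{O}_{\mathbb{P}(E)}^{\oplus N}\to i_*F\to0$ taken as a hypothesis, and you should state it that way rather than deriving the resolution from Theorem~\ref{equivalence}.
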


\begin{proof}
Consider the  linear resolution on $\mathbb{P}(E)$:
\[ 0 \longrightarrow \mathcal{O}_{\mathbb{P}(E)}(-1)^{\oplus N} \longrightarrow \mathcal{O}_{\mathbb{P}(E)}^{\oplus N} \longrightarrow i_* F \longrightarrow 0. \]
Applying the direct image functor $\pi_*$ yields the long exact sequence on $X$:
\[ 0 \longrightarrow \pi_* \mathcal{O}(-1)^N \longrightarrow \pi_* \mathcal{O}^N \longrightarrow \pi_* (i_* F) \longrightarrow R^1 \pi_* \mathcal{O}(-1)^N \longrightarrow \dots \]

Using the standard projective bundle formula:
\begin{itemize}
    \item $\pi_* \mathcal{O}(-1)^N = 0$ since $\mathcal{O}(-1)$ has no fiberwise global sections.
    \item $R^i \pi_* \mathcal{O}(-1)^N = 0$ for all $i \ge 1$, which is the vanishing range for the first higher direct image.
    \item $\pi_* \mathcal{O}^N \cong \mathcal{O}_X^{\oplus N}$ because $\pi_* \mathcal{O}_{\mathbb{P}(E)} \cong \mathcal{O}_X$.
\end{itemize}

Substitution into the sequence collapses it to an isomorphism:
\[ \mathcal{O}_X
^{\oplus N} \cong \pi_* (i_* F). \]
Since $\pi \circ i = \pi_{Y_f}$, we conclude $(\pi_{Y_f})_* F \cong \mathcal{O}_X^{\oplus N}$.
\end{proof}

\begin{proposition}[Global Sections of Relative Ulrich Bundles]
Let $F$ be a relatively Ulrich bundle of rank $r$ on $Y_f \subset \mathbb{P}(E)$ with $\operatorname{rank}(E) = n+1$. Then
\[
h^0(Y_f, F) = dr \cdot h^0(X, \OO_X).
\]
In particular, if $X$ is connected and projective, then $h^0(Y_f, F) = dr$.
\end{proposition}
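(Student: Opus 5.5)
The plan is to compute $H^0(Y_f,F)$ by pushing forward to $X$, combining the linear resolution of the preceding proposition with the Leray spectral sequence for $\pi_{Y_f}=\pi\circ i$.

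\textbf{Step 1: the pushforward.} Since $F$ is relatively Ulrich of rank $r$, the preceding proposition gives a short exact sequence
\[
0 \to \mathcal{O}_{\mathbb{P}(E)}(-1)^{\oplus dr} \to \mathcal{O}_{\mathbb{P}(E)}^{\oplus dr} \to i_*F \to 0 .
\]
The argument of the previous proposition then yields $(\pi_{Y_f})_*F \cong \mathcal{O}_X^{\oplus dr}$. That argument uses Lemma~\ref{lemma 1.1}: it gives $\pi_*\mathcal{O}(-1)=0$, and $R^1\pi_*\mathcal{O}(-1)=0$ (when $n\ge 2$; for $n=1$ it also holds, since $-1>-2$).

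\textbf{Step 2: Leray.} Since $i$ is a closed immersion, $H^0(Y_f,F)=H^0(\mathbb{P}(E),i_*F)$. For any morphism, global sections are computed by first pushing forward, with no spectral sequence degeneration needed in degree $0$:
\[
H^0(\mathbb{P}(E),i_*F)=H^0(X,\pi_*i_*F).
\]
Combining with Step 1,
\[
H^0(Y_f,F)\cong H^0(X,\mathcal{O}_X^{\oplus dr})=H^0(X,\mathcal{O}_X)^{\oplus dr},
\]
so $h^0(Y_f,F)=dr\cdot h^0(X,\mathcal{O}_X)$.

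\textbf{Step 3: the connected projective case.} If $X$ is connected and projective over the algebraically closed field $k$, it is also reduced, being smooth. Hence $H^0(X,\mathcal{O}_X)=k$, since a connected, reduced, proper scheme over an algebraically closed field has only constant global functions. This gives $h^0(Y_f,F)=dr$.

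\textbf{Main obstacle.} The only genuinely nontrivial input is the global linear resolution with $N=dr$, which is the preceding proposition resting on Theorem~\ref{equivalence}. Once that is granted, the step requiring most care is matching the rank count: the resolution rank $N$ appearing in the pushforward proposition must be identified with $dr$. This follows from the fiberwise count $S=rd$ on Ulrich bundles on a degree $d$ hypersurface in $\mathbb{P}^n$, which is already built into the statement of the preceding proposition. I would invoke that statement directly rather than recompute it.
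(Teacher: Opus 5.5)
Your Steps 2 and 3 are fine, and your overall route is the paper's: linear resolution, then $(\pi_{Y_f})_*F\cong\mathcal{O}_X^{\oplus dr}$, then global sections. Step 2 is even cleaner than the paper's appeal to the relative ACM vanishing, since $H^0(\mathbb{P}(E),\mathcal{G})=H^0(X,\pi_*\mathcal{G})$ holds by definition. The gap is the input you propose to take on faith.

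\textbf{What relative Ulrichness actually gives.} Cohomology and base change, using $h^0(F_x)=dr$ and $h^i(F_x)=0$ for $i>0$, shows only that $V:=(\pi_{Y_f})_*F$ is locally free of rank $dr$ and that $h^0(Y_f,F)=h^0(X,V)$. The claim $V\cong\mathcal{O}_X^{\oplus dr}$ does not follow. Equivalently, nothing forces an untwisted resolution $0\to\mathcal{O}_{\mathbb{P}(E)}(-1)^{\oplus dr}\to\mathcal{O}_{\mathbb{P}(E)}^{\oplus dr}\to i_*F\to 0$.

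\textbf{The circularity.} In the preceding proposition the integer $S$ is $h^0(\mathbb{P}(E),i_*F)=h^0(Y_f,F)$. So ``$S=rd$'' is literally the statement you are trying to prove, and the fiberwise count $h^0(F_x)=rd$ does not give it. Your ``main obstacle'' paragraph makes exactly this fiber-versus-global conflation, so invoking that proposition is circular. Step~7 of Theorem~\ref{equivalence} has a related flaw: it rests on an isomorphism $H^0(\mathbb{P}(E),K(1))\otimes_k\kappa(x)\cong H^0(\mathbb{P}(E)_x,K(1)|_{\mathbb{P}(E)_x})$. Cohomology and base change provides this only for $\pi_*K(1)\otimes\kappa(x)$, not for global sections over $X$.

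\textbf{The statement is false as stated.} So no repair of this step is possible. Take $X$ of positive dimension, $E=\mathcal{O}_X^{\oplus(n+1)}$ with $n\ge 2$, and $Y_f=X\times H$ for a hyperplane $H\subset\mathbb{P}^n$, so $d=1$. Let $M$ be a very ample line bundle on $X$ and set $F:=\pi_{Y_f}^*M$.
\begin{itemize}
\item By Propositions~\ref{prop:trivial-ulrich-converse} and~\ref{Wild}, $F$ is relatively Ulrich of rank $r=1$. Directly: it is globally generated, and $R^i(\pi_{Y_f})_*F(-j)\cong R^i(\pi_{Y_f})_*\mathcal{O}_{Y_f}(-j)\otimes M=0$.
\item But $(\pi_{Y_f})_*F\cong M$, so $h^0(Y_f,F)=h^0(X,M)\ge 2$.
\item The claimed value is $dr\cdot h^0(X,\mathcal{O}_X)=1$.
\end{itemize}
More generally, $F\otimes\pi^*G$ breaks the formula for every nontrivial globally generated $G$.

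\textbf{What your argument does prove.} It proves the formula for those $F$ that genuinely admit the untwisted linear resolution. Examples are the cokernels of the linearization maps $\alpha$ attached to linear Clifford representations. For a general relatively Ulrich $F$, the correct statement is $h^0(Y_f,F)=h^0\bigl(X,(\pi_{Y_f})_*F\bigr)$, where $(\pi_{Y_f})_*F$ is a rank-$dr$ vector bundle that need not be trivial.
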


\begin{proof}
Since $F$ is relatively Ulrich, it has resolution
\[
0 \to \OO_{\mathbb{P}(E)}(-1)^{\oplus dr} \to \OO_{\mathbb{P}(E)}^{\oplus dr} \to i_* F \to 0.
\]
Apply $\pi_*$ and use $\pi_* \OO_{\mathbb{P}(E)}(-1) = 0$ and $\pi_* \OO_{\mathbb{P}(E)} = \OO_X$ to get
\[
\pi_* F \cong \OO_X^{\oplus dr}.
\]
The ACM property gives $R^q \pi_* F = 0$ for $q > 0$. Using \cite[Exercise 8.1]{Hart}, we have
\[
H^0(Y_f, F) \cong H^0(X, \pi_* F) \cong H^0(X, \OO_X)^{\oplus dr}.
\]
Thus $h^0(Y_f, F) = dr \cdot h^0(X, \OO_X)$. For $X$ connected projective, $h^0(X, \OO_X) = 1$.
\end{proof}

\begin{proposition}
Let $F, G, H$ be globally generated vector bundles on $Y_f$, and suppose
\[
0 \longrightarrow F \longrightarrow G \longrightarrow H \longrightarrow 0
\]
is a short exact sequence. If any two of $F, G, H$ are relatively Ulrich, then the third one is also relatively Ulrich.
\end{proposition}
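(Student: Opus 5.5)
The plan is to work directly with Definition~\ref{relativeulrich}. Global generation of all three bundles is part of the hypothesis, so only the vanishing conditions need proof. Fix $j$ with $1 \leq j \leq n-1$ and write $p := \pi|_{Y_f}$.

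\textbf{Twisting and the long exact sequence.} Twisting by $\mathcal{O}_{Y_f}(-j)$ is exact because it is tensoring with a line bundle. This gives
\[
0 \to F(-j) \to G(-j) \to H(-j) \to 0 .
\]
Applying $p_*$ produces the long exact sequence of higher direct images on $X$:
\[
\cdots \to R^{i-1}p_*H(-j) \to R^i p_*F(-j) \to R^i p_*G(-j) \to R^i p_*H(-j) \to R^{i+1}p_*F(-j) \to \cdots
\]

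\textbf{The three cases.}
\begin{itemize}
\item If $F$ and $H$ are relatively Ulrich, then $R^ip_*G(-j)$ sits between $R^ip_*F(-j)=0$ and $R^ip_*H(-j)=0$. It therefore vanishes for all $i \geq 0$.
\item If $F$ and $G$ are relatively Ulrich, then $R^ip_*H(-j)$ sits between $R^ip_*G(-j)=0$ and $R^{i+1}p_*F(-j)=0$. It therefore vanishes for all $i \geq 0$.
\item If $G$ and $H$ are relatively Ulrich, then $R^ip_*F(-j)$ sits between $R^{i-1}p_*H(-j)$ and $R^ip_*G(-j)$. Both vanish, where for $i=0$ the left term is interpreted as $0$, and the sequence starts $0 \to p_*F(-j) \to p_*G(-j)$. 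So $R^ip_*F(-j)=0$ for all $i$.
\end{itemize}
The left-exactness of $p_*$ handles the start of the sequence in every case, and $R^ip_*=0$ for $i>n-1$ since $p$ has fibers of dimension $n-1$. The sequence is therefore finite, and no boundary issues arise.

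\textbf{Where care is needed.} There is essentially no hard step once global generation is assumed for all three bundles. The only point needing attention is that the statement does not require the twists to give the full classical Ulrich range. Definition~\ref{relativeulrich} uses exactly the range $1 \le j \le n-1$, which is uniform for $F,G,H$, so the argument matches it verbatim.

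If one preferred to argue fiberwise via Remark~\ref{rem:fiberwise-ulrich}, one would restrict the sequence to each fiber $(Y_f)_x$, which remains exact because $H$ is locally free. One would then apply the same long exact sequence in cohomology on the fiber. This is an alternative if one wants to avoid direct images, but the direct-image argument above is the one I would write.
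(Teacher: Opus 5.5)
Your proposal is correct and follows the paper's proof: twist the sequence by $\mathcal{O}_{Y_f}(-j)$, apply the long exact sequence of higher direct images under $\pi|_{Y_f}$, and read off the vanishing. The only difference is that you spell out the three cases explicitly, which the paper leaves as ``clear from the definition.''
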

\begin{proof}
    Consider the exact sequence after tensoring by $\mathscr{O}_{Y_f}(-j)$ we have that,
    
    \begin{equation*}
        O \rightarrow F(-j) \rightarrow G(-j) \rightarrow H(-j) \rightarrow 0
    \end{equation*}
   Applying pushforward, we obtain the long exact sequence. 
   \begin{multline*}
0 \rightarrow \pi_{*}F(-j) \rightarrow \pi_{*}G(-j) \rightarrow \pi_{*}H(-j) 
\rightarrow R^{1}\pi_{*}F(-j) \rightarrow R^{1}\pi_{*}G(-j)\rightarrow  \cdots \\
\rightarrow R^{n}\pi_{*}F(-j) \rightarrow R^{n}\pi_{*}G(-j) 
\rightarrow R^{n}\pi_{*}H(-j) \rightarrow \cdots
\end{multline*}

    Now it clearly follows from the definition that if any two of $F$ ,$G$ and $H$ are relatively Ulrich, the third one is so.
\end{proof}

\begin{remark}
    
If $F$ and $G$ are relatively Ulrich, then it is not necessary to assume that $H$ is globally generated. This follows from the fact that $G$ is globally generated.
\end{remark}

\begin{proposition}\label{Wild}
Let $F$ be a relatively Ulrich bundle on $Y_f$, and let $G$ be a globally generated locally free sheaf on $X$. Then the tensor product
\[
F \otimes \pi^* G
\]
is also a relatively Ulrich bundle on $Y_f$.
\end{proposition}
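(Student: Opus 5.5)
The plan is to check the two conditions in Definition~\ref{relativeulrich} one at a time: global generation, and vanishing of the higher direct images. Write $p := \pi|_{Y_f}$.

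\textbf{Global generation.} By hypothesis $F$ is globally generated, so there is a surjection $\mathcal{O}_{Y_f}^{\oplus S}\to F$. Pulling back a surjection $\mathcal{O}_X^{\oplus T}\to G$ gives a surjection $\mathcal{O}_{Y_f}^{\oplus T}\to p^*G$, since pullback is right exact. Tensoring the two surjections gives a surjection
\[
\mathcal{O}_{Y_f}^{\oplus ST}\longrightarrow F\otimes p^*G .
\]
Hence $F\otimes p^*G$ is globally generated. It is locally free because both factors are.

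\textbf{Vanishing.} For the direct image conditions I use the projection formula. Since $G$ is locally free of finite rank on $X$, for all $i\ge 0$ and $1\le j\le n-1$ we have
\[
R^ip_*\bigl((F\otimes p^*G)(-j)\bigr)\cong R^ip_*\bigl(F(-j)\otimes p^*G\bigr)\cong R^ip_*\bigl(F(-j)\bigr)\otimes G .
\]
The right-hand side vanishes because $F$ is relatively Ulrich. The projection formula for higher direct images with a locally free factor is standard (Hartshorne III, Ex.~8.3).

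\textbf{Alternative check via the resolution.} One could also argue from the resolution of the preceding Proposition. Tensoring $0\to\mathcal{O}(-1)^{\oplus rd}\to\mathcal{O}^{\oplus rd}\to i_*F\to 0$ with $\pi^*G$ preserves exactness, since $\pi^*G$ is flat. This gives a resolution of $i_*(F\otimes p^*G)$ by $\pi^*G(-1)^{\oplus rd}$ and $\pi^*G^{\oplus rd}$. The vanishing then follows as in Proposition~\ref{reso}, again using the projection formula, now on $\mathbb{P}(E)$. The one point to watch is the identification $i_*(F\otimes i^*\pi^*G)\cong i_*F\otimes \pi^*G$, which is the projection formula for the closed immersion $i$.

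There is no real obstacle. The one subtlety is that global generation of $F\otimes p^*G$ needs $G$ to be globally generated on $X$, which is exactly why that hypothesis appears. The result is not an isomorphism with a trivial bundle: by the preceding proposition, $p_*(F\otimes p^*G)\cong G^{\oplus rd}$. Nothing in the definition requires triviality, so this causes no problem, and the proof is complete once both conditions are verified.
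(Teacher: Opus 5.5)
Your proof is correct and is the paper's argument: the vanishing is exactly the projection formula $R^ip_*\bigl(F(-j)\otimes p^*G\bigr)\cong R^ip_*F(-j)\otimes G$. Your explicit global-generation step (tensoring the surjections onto $F$ and $p^*G$) supplies what the paper's one-line proof leaves implicit. Your closing remark that $p_*(F\otimes p^*G)\cong p_*F\otimes G$ need not be trivial is also right, and it has a real consequence. Take $F=\mathcal{O}_{Y_f}$ on a relative hyperplane and $G$ a nontrivial globally generated line bundle. This gives a relatively Ulrich bundle with nontrivial pushforward $G$, so what fails is the paper's earlier claim that every relatively Ulrich bundle has a resolution $0\to\mathcal{O}_{\mathbb{P}(E)}(-1)^{\oplus rd}\to\mathcal{O}_{\mathbb{P}(E)}^{\oplus rd}\to i_*F\to 0$ (and hence trivial pushforward), not this proposition.
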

\begin{proof}
Using projection formula, we have that,
\begin{equation*}
    R^{i}\pi_{*}(F(-j) \otimes \pi^{*}G) \cong R^{i}\pi_{*}F(-j) \otimes G
\end{equation*}
clearly by definition we have $F \otimes \pi^{*}G$ is also relatively Ulrich with respect to the same polarization.
\end{proof}
\begin{remark}
If we have a relatively Ulrich bundle on $Y_{f}$ using the previous proposition we can construct infinitely many non-isomorphic Ulrich bundles. This also gives an important observation that relative Ulrichness is invariant under the tensor product, which is unlike the classical case.
\end{remark}

In the relative setting, where one considers a fibration $\pi: \mathbb{P}(E) \to X$, the notion of relative Ulrichness encodes the simultaneous Ulrich property across all fibers. For moduli-theoretic applications, it is essential that this property behaves well in families. We prove that relative Ulrichness is open in flat families: if a single fiber carries a relatively Ulrich bundle, then so do all nearby fibers.

\begin{proposition}[Openness of relative Ulrichness] \label{prop:openness_ulrich}
Let $S$ be a Noetherian scheme and $p \colon X \to S$ a projective morphism. Let $\pi \colon \mathbb{P}(E) \to X$ be the associated projective bundle of relative dimension $n$, and fix an integer $d \ge 1$.

Let $T$ be a Noetherian $S$-scheme, and let $Y \subset \mathbb{P}(E) \times_S T$ be a closed subscheme flat over $T$, whose fibers $Y_t$ are relative hypersurfaces of degree $d$. Let $\Pi \colon Y \to X \times_S T$ be the induced projection, and let $F$ be a vector bundle on $Y$.

Suppose that for some point $t_0 \in T$, the fiber $F_{t_0}$ is relatively Ulrich on $Y_{t_0}$ with respect to $\pi_{t_0} \colon Y_{t_0} \to X_{s(t_0)}$. Then there exists an open neighborhood $U \subset T$ of $t_0$ such that for all $t \in U$, the sheaf $F_t$ is relatively Ulrich on $Y_t$.
\end{proposition}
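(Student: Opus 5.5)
The plan is to reduce relative Ulrichness of $F_t$ to a finite list of vanishing conditions for higher direct images along the composite projective morphism $Y \to X\times_S T \to T$, and then use upper semicontinuity. By Remark~\ref{rem:fiberwise-ulrich}, $F_t$ is relatively Ulrich on $Y_t$ if and only if, for every point $x$ of $X_{s(t)}$, the restriction of $F$ to the fibre $Y_{(x,t)} := \Pi^{-1}(x,t) \subset \mathbb{P}^n_{\kappa(x,t)}$ is an Ulrich bundle. Equivalently, in the terms of Remark~\ref{ClassicalUlrich}, we need
\[
H^i\bigl(Y_{(x,t)}, F(-j)|_{Y_{(x,t)}}\bigr) = 0 \quad \text{for all } i\ge 0,\ 1\le j\le n-1,
\]
together with global generation. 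Over an algebraically closed field the classical Ulrich condition already forces global generation, by \cite[Proposition~2]{Arnuad}, since it gives the linear resolution. So only the cohomological vanishing needs to be propagated.

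First I will check flatness: $F$ is locally free on $Y$, and since the hypersurfaces $Y_t \subset \mathbb{P}(E_{s(t)})$ are relative hypersurfaces (Cartier divisors flat over $X_{s(t)}$), $Y$ is flat over $X\times_S T$ fibrewise. I will argue this with the local criterion of flatness, since $Y$ is cut out by one equation that is a nonzerodivisor on each fibre. Hence each $F(-j)$ is flat over $X\times_S T$, and $\Pi$ is projective. The semicontinuity theorem \cite[III.12.8]{Hart} then says that the set
\[
Z_j := \{ y\in X\times_S T : H^i(Y_y, F(-j)|_{Y_y}) \neq 0 \text{ for some } i\}
\]
is closed in $X\times_S T$, being a finite union over $0\le i\le n-1$ of closed sets. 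Let $Z := \bigcup_{j=1}^{n-1} Z_j$; this is again closed.

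By hypothesis, $Z$ does not meet the fibre $X_{s(t_0)}\times\{t_0\}$ of $q\colon X\times_S T\to T$. Since $p$ is projective, $q$ is proper, so $q(Z)$ is closed in $T$ and does not contain $t_0$. I will then take $U := T\setminus q(Z)$. For $t\in U$ every fibre $Y_{(x,t)}$ satisfies the vanishing conditions, so $F_t$ is fibrewise Ulrich, and hence relatively Ulrich by Remark~\ref{rem:fiberwise-ulrich}. If one prefers to work with the direct images themselves, Lemma~\ref{lemma 1.2} applied to $F_t(-j)$ over $X_{s(t)}$ gives $R^i(\pi_t)_*F_t(-j)=0$ directly.

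I expect the main obstacle to be the bookkeeping around flatness and fibres: checking that $F(-j)$ is flat over $X\times_S T$, not merely over $T$, so that semicontinuity applies to $\Pi$, and identifying the fibre of $\Pi$ over $(x,t)$ with the fibre of $Y_t\to X_{s(t)}$ over $x$. The other substantive point is global generation, which I handle through the fibrewise linear resolution rather than by semicontinuity. If a global statement is needed, I will instead invoke the resolution from the preceding proposition on $Y_t$, which holds once the fibrewise vanishing is established.
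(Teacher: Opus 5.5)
Your treatment of the vanishing conditions is correct and is the same as the paper's Step~1: semicontinuity for $F(-j)$ over $X\times_S T$, then properness of $X\times_S T\to T$. (The flatness of $Y$ over $X\times_S T$ that you flag follows from the fibrewise flatness criterion, since $Y$ is flat over $T$ and each $Y_t$ is flat over $X\times_S\kappa(t)$.)

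The gap is global generation. In Definition~\ref{relativeulrich} it means that $F_t$ is generated by $H^0(Y_t,F_t)$, which is an absolute condition on $Y_t$. The paper uses it this way throughout, and this is why its proof has a separate Step~2 on ``absolute global generation''. Fibrewise Ulrichness over the points of $X_{s(t)}$ only gives generation of each restriction $F_t|_{Y_{(x,t)}}$, i.e.\ surjectivity of $\pi_t^*\pi_{t*}F_t\to F_t$. Absolute generation also needs $\pi_{t*}F_t$ to be globally generated on $X_{s(t)}$, and that is not a fibrewise condition. So Remark~\ref{rem:fiberwise-ulrich} fails in exactly the direction you use. For example, on a relative hyperplane, $\mathcal{O}_{Y_f}\otimes\pi^*L$ with $H^0(X,L)=0$ is fibrewise Ulrich but has no nonzero sections. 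Your fallback is circular: the proposition producing the linear resolution starts from a surjection $\mathcal{O}^{\oplus S}\to i_*F$, so it already assumes global generation.

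No argument can close this gap, because the statement is false as written. Let $S=\operatorname{Spec}k$ and $X=T=C$ an elliptic curve with origin $o$. Take $E=\mathcal{O}_C^{\oplus(n+1)}$ with $n\ge 2$, $H\subset C\times\mathbb{P}^n$ a relative hyperplane, and $Y=H\times C$. Let $F=\Pi^*\mathcal{P}$ with $\mathcal{P}=\mathcal{O}_{C\times C}(\Delta)\otimes\mathrm{pr}_1^*\mathcal{O}_C(-o)$, so that $F_t=\pi^*\mathcal{O}_C(t-o)$ on $H$. Then $F_o=\mathcal{O}_H$ is relatively Ulrich by Proposition~\ref{prop:trivial-ulrich-converse}. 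For $t\neq o$, $F_t$ satisfies every vanishing condition by the projection formula, but $H^0(H,F_t)=H^0(C,\mathcal{O}_C(t-o))=0$, so it is not globally generated.

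The same example refutes the paper's Step~2: global generation is not open in flat proper families. Here $q_*F=0$ for $q\colon Y\to T$, so the section $1$ of $F_o$ extends to no neighbourhood of $o$; the obstruction comes from $H^1(H,\mathcal{O}_H)\neq 0$.

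What your argument does prove is that the fibrewise Ulrich locus, i.e.\ the locus where the vanishing conditions hold, is open. To get the stated conclusion you need an extra hypothesis such as $H^1(Y_{t_0},F_{t_0})=0$. Cohomology and base change then makes $q_*F\otimes\kappa(t_0)\to H^0(Y_{t_0},F_{t_0})$ surjective. Hence $\operatorname{Supp}\operatorname{coker}(q^*q_*F\to F)$ is a closed set missing $q^{-1}(t_0)$, and properness of $q$ gives a neighbourhood of $t_0$ on which every $F_t$ is globally generated.
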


\begin{proof}
We verify that the two defining conditions of relative Ulrichness—the vanishing of higher direct images and global generation—are open conditions on the base $T$.

\vspace{0.2cm}
\noindent \textbf{Step 1: Vanishing of higher direct images.} \\
Because $Y$ is a relative hypersurface bundle, it is flat over $X \times_S T$. Since $F$ is a vector bundle on $Y$, it follows that $F$, and any twist $F(-j)$, is flat over $X \times_S T$. 

Fix $j \in \{1, \dots, n-1\}$ and $i \ge 0$. For any point $(x, t) \in X \times_S T$, let $Y_{x, t}$ denote the fiber of $\Pi$ over $(x, t)$. By Grauert's Semicontinuity Theorem, the dimension of the fiber cohomology:
$$ \varphi_{i,j}(x, t) := \dim H^i(Y_{x, t}, F(-j)|_{Y_{x, t}}) $$
is an upper semicontinuous function on $X \times_S T$. Therefore, the locus where this cohomology does not vanish,
$$ W_{i,j} := \{ (x, t) \in X \times_S T \mid \varphi_{i,j}(x, t) > 0 \}, $$
is a closed subset of $X \times_S T$.

By hypothesis, $F_{t_0}$ is relatively Ulrich. By cohomology and base change, the vanishing condition $R^i(\pi_{t_0})_*F_{t_0}(-j) = 0$ implies that the fiber cohomology $\varphi_{i,j}(x, t_0) = 0$ for all $x \in X_{s(t_0)}$. Consequently, $W_{i,j}$ is entirely disjoint from the fiber $X_{s(t_0)} \times \{t_0\}$.

Because the base scheme $X$ is projective over $S$, the projection $\operatorname{pr}_T \colon X \times_S T \to T$ is a proper morphism. Thus, the image $Z_{i,j} := \operatorname{pr}_T(W_{i,j})$ is closed in $T$, and $t_0 \notin Z_{i,j}$. We define the open neighborhood
$$ U_{i,j} := T \setminus Z_{i,j}. $$
For any $t \in U_{i,j}$, the fiber cohomology vanishes everywhere on $X_{s(t)}$, which by Grothendieck's Base Change Theorem implies $R^i(\pi_t)_*F_t(-j) = 0$. 

\vspace{0.2cm}
\noindent \textbf{Step 2: Absolute global generation.} \\
Let $q \colon Y \to T$ be the composition of the morphisms $Y \xrightarrow{\Pi} X \times_S T \xrightarrow{\operatorname{pr}_T} T$. Because $X$ is projective over $S$ and $Y$ is a closed subscheme of a projective bundle, $q$ is a proper morphism. We wish to show that the locus of points $t \in T$ where the fiber $F_t$ is globally generated on $Y_t$ is an open subset of $T$.

Because $F$ is a coherent sheaf on $Y$ and is flat over $T$, the property of fibers being globally generated is structurally open on the base. (This is a standard topological consequence of semicontinuity for proper morphisms; cf.\ Grothendieck's EGA III).

By hypothesis, the fiber $F_{t_0}$ is globally generated. Therefore, there naturally exists an open neighborhood $U_{gg} \subset T$ of $t_0$ such that for all $t \in U_{gg}$, the bundle $F_t$ is generated by its global sections $H^0(Y_t, F_t)$.

\vspace{0.2cm}
\noindent \textbf{Step 3: Conclusion.} \\
Define the finite intersection:
$$ U := U_{gg} \cap \left( \bigcap_{j=1}^{n-1} \bigcap_{i \ge 0} U_{i,j} \right). $$
Because the relative dimension $n$ is finite and the higher direct images vanish for $i > n-1$ by dimensional reasons, this is a finite intersection of open sets. Thus $U$ is an open neighborhood of $t_0$ in which $F_t$ satisfies both defining conditions of relative Ulrichness.
\end{proof}

Now we turn to the notions of relative semistability and stability for relatively Ulrich bundles. We first recall the appropriate definition in our relative setting, following \cite{simpson}.

\begin{definition}
\label{def:relative-semistable}
Let \(\pi \colon \mathbb{P}(E) \to X\) be the projective bundle and let \(F\) be a vector bundle on \(Y_f\). We say that \(F\) is \emph{relatively semistable} (respectively, \emph{relatively stable}) with Hilbert polynomial \(P\) if, for every point \(x \in X\), the restriction \(F_x = F|_{Y_x}\) is a semistable (respectively, stable) vector bundle on the fiber \(Y_x\) with Hilbert polynomial \(P\).
\end{definition}

\begin{remark}
Similarly, as for a flat family on each fiber, the degree remains constant; we can define the relative slope semistability(resp stability).
\end{remark}
\begin{proposition}
    Let $F$ be any relative Ulrich bundle on $Y_{f}$. Then $F$ is relatively semistable.
\end{proposition}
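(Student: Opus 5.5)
The plan is to reduce everything to the fibres and then quote the classical fact that Ulrich sheaves are semistable. By Definition~\ref{def:relative-semistable}, relative semistability of $F$ asks for two things. First, each restriction $F_x = F|_{Y_x}$ must be semistable on the fibre $Y_x = (Y_f)_x \subset \mathbb{P}(E)_x \cong \mathbb{P}^n_{\kappa(x)}$. Second, all the $F_x$ must have the same Hilbert polynomial $P$. I would prove these two points separately.

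\textbf{Step 1 (fibrewise Ulrich).} By Remark~\ref{rem:fiberwise-ulrich}, each $F_x$ is an Ulrich bundle on the degree $d$ hypersurface $Y_x \subset \mathbb{P}^n$. A more concrete route is to restrict the linear resolution
\[
0 \to \mathcal{O}_{\mathbb{P}(E)}(-1)^{\oplus rd} \to \mathcal{O}_{\mathbb{P}(E)}^{\oplus rd} \to i_*F \to 0
\]
from the preceding proposition to $\mathbb{P}(E)_x$. Restriction stays exact: $i_*F$ is flat over $X$, because $F$ is locally free on $Y_f$ and $Y_f$ is flat over $X$, being a relative hypersurface whose fibres are hypersurfaces. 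Hence $\mathrm{Tor}_1$ against $\kappa(x)$ vanishes, and we get a linear resolution
\[
0 \to \mathcal{O}_{\mathbb{P}^n}(-1)^{rd} \to \mathcal{O}_{\mathbb{P}^n}^{rd} \to F_x \to 0
\]
on every fibre.

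\textbf{Step 2 (constant Hilbert polynomial).} From this fibre resolution we read off
\[
\chi(F_x(m)) = rd\left(\binom{m+n}{n} - \binom{m+n-1}{n}\right) = rd\binom{m+n-1}{n-1}.
\]
The right-hand side does not depend on $x$. Alternatively, constancy of $\chi$ in flat proper families gives the same conclusion. So $P(m) = rd\binom{m+n-1}{n-1}$ works for every fibre.

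\textbf{Step 3 (semistability on a fibre).} This is where the real content lies. One option is to cite Casanellas--Hartshorne / Eisenbud--Schreyer, which show that an Ulrich sheaf is Gieseker semistable, and stable if and only if it is simple. I would also sketch the argument directly, since it is short. Let $G \subset F_x$ be a saturated subsheaf of rank $s$, $0<s<r$, and put $Q = F_x/G$. Since $F_x$ is globally generated and its reduced Hilbert polynomial is $d\binom{m+n-1}{n-1}$, the quotient $Q$ is also globally generated. Its "Ulrich-type" count $h^0$ has minimal possible size relative to rank and degree. From the resolution, $\mu(F_x)$ is determined and equal to that of an Ulrich bundle. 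To compare reduced Hilbert polynomials, I would apply the standard inequality: for a torsion-free, globally generated sheaf $Q$ on $Y_x$ of rank $s'$,
\[
\frac{P_Q}{s'} \ge \frac{P_{F_x}}{r}
\]
term by term. Concretely, one uses $h^0(Q(m)) \ge$ the Ulrich bound and notes that a linear projection $Y_x \to \mathbb{P}^{n-1}$ pushes $F_x$ forward to $\mathcal{O}^{rd}$. This is \cite[Proposition~2]{Arnuad}. Pushing $G$ forward therefore gives a subsheaf of a trivial bundle, and a subsheaf of a trivial bundle on $\mathbb{P}^{n-1}$ has reduced Hilbert polynomial at most that of $\mathcal{O}$. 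Since the pushforward is finite, Hilbert polynomials are preserved. It follows that $p_G \le p_{F_x}$, which is Gieseker semistability.

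The main obstacle is Step 3, that is, making the comparison of reduced Hilbert polynomials rigorous. The pushforward argument handles it cleanly, provided the linear projection is chosen to be finite on $Y_x$; for general choices it is.
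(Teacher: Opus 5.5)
Your proposal is correct and follows the paper's route: each fibre $F_x$ is Ulrich, all fibres share one Hilbert polynomial, and Ulrich bundles are Gieseker semistable; the paper simply cites the book of Costa, Mir\'o-Roig and Pons-Llopis for the last two facts, whereas you compute $P(m)=rd\binom{m+n-1}{n-1}$ from the fibrewise linear resolution and prove semistability by pushing a subsheaf $G\subset F_x$ forward along a finite linear projection into $\mathcal{O}_{\mathbb{P}^{n-1}}^{rd}$, which is a valid self-contained argument. Two side claims should be dropped, though neither is used: the ``standard inequality'' $P_Q/s'\ge P_{F_x}/r$ for an arbitrary globally generated torsion-free $Q$ is false as stated (for $Q=\mathcal{O}_Y$ on a plane conic it reads $2m+1\ge 2m+2$), and simple Ulrich bundles need not be stable.
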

\begin{proof}
As the family is flat and we know that for any Ulrich bundle the reduced Hilbert polynomial is same \cite[corollary 3.2.10]{Costa}. So in this case for each fiber we have $F_x$ is Ulrich and with the same Hilbert polynomial. Also each $F_{x}$ is semistable \cite[proposition 3.3.14]{Costa}. Also as the rank of $F$ is rank $F_{x}$ for all $x \in X$ we have the Hilbert polynomial is exactly the same. Call it $P$. Thus $F$ is relatively semistable with Hilbert polynomial $P$.
\end{proof}
% \begin{proposition}
%     If any Ulrich bundle is relatively stable, then it is relatively slope stable.
% \end{proposition}
% \begin{proof}
%     If the Relative Ulrich bundle is relatively stable,then on each fiber it is stable. Now in \cite[corollary 3.3.17, page 64]{Costa} it is shown that an absolute Ulrich is stable if and only if it is slope stable. So on each fiber Ulrich bundle is slope stable thus the relative Ulrich bundle is also slope stable.
% \end{proof}
\section{generalized Clifford algebra and universal property}\label{sec 4}
Let $X$ be a scheme, and let $E$ be a locally free sheaf of finite rank on $X$.
Let
\[
f \in H^{0}\bigl(S, \operatorname{Sym}^{d}(E)^{\vee})\bigr)
\]
be a homogeneous form of degree $d \geq 2$. We assume $f$ is nondegenerate, i.e.,
the hypersurface
\[
X \subseteq \mathbb{P}(E)
\]
defined by $f=0$ is smooth over $X$. From now onwards, we will consider the forms $f$ taking values in the structure sheaf $\mathcal{O}_X$.

\begin{definition}[Generalized Clifford Algebra]\label{def:clifford-algebra}
Let $X$ be a Noetherian scheme over a field $k$, let $E$ be a locally free $\mathcal{O}_X$-module of rank $n+1$, and let $f \in \Gamma(X, \operatorname{Sym}^d(E^\vee))$ be a global section of degree $d \geq 1$. The \emph{generalized Clifford algebra} associated to $(E, f)$, denoted $C_f$ or $C(E, f)$, is the quotient of the tensor algebra $T^\bullet(E)$ by the two-sided ideal sheaf $\mathcal{I}$ generated by the elements
\begin{equation}\label{eq:clifford-relation}
v^{\otimes d} - f(v) \cdot 1
\end{equation}
for all local sections $v \in E(U)$, where $U \subseteq X$ is open and $f(v) \in \mathcal{O}_X(U)$ denotes the evaluation of $f$ on $v$.

Explicitly, for every open subset $U \subseteq X$ we have
\begin{equation}\label{eq:clifford-local}
C_f(U) = \frac{T_{\mathcal{O}_X(U)}(E(U))}{\bigl\langle v^{\otimes d} - f(v) \cdot 1 : v \in E(U) \bigr\rangle}.
\end{equation}
\end{definition}

\begin{remark}[Coordinate Description]\label{rem:coordinate-description}
Suppose $E|_U \cong \mathcal{O}_U^{\oplus (n+1)}$ with basis $x_0, x_1, \ldots, x_n$. Then the form $f$ is given by
\begin{equation}\label{eq:form-coordinates}
f = \sum_{|I|=d} a_I x^I \in \mathcal{O}_X(U)[x_0, x_1, \ldots, x_n]_d,
\end{equation}
where $I = (i_0, \ldots, i_n)$ is a multi-index and $x^I = x_0^{i_0} \cdots x_n^{i_n}$. In this case $C_f|_U$ is the quotient of the free associative $\mathcal{O}_X(U)$-algebra $\mathcal{O}_X(U)\{x_0, x_1, \ldots, x_n\}$ by the relations
\begin{equation}\label{eq:clifford-coordinates}
(c_0 x_0 + c_1 x_1 + \cdots + c_n x_n)^d = f(c_0, c_1, \ldots, c_n)
\end{equation}
for all $c_0, c_1, \ldots, c_n \in \mathcal{O}_X(U)$.
\end{remark}

\begin{proposition}[Universal property]
Let $A$ be a sheaf of associative $\mathcal{O}_S$-algebras.
For any morphism of $\mathcal{O}_X$-modules
\[
\varphi : E \to A
\]
satisfying
\[
\varphi(s)^d = f(s)\cdot 1_{\mathcal A}
\quad \text{for all local sections } s \in E(U),
\]
there exists a unique morphism of $\mathcal{O}_S$-algebras
\[
\widetilde{\varphi} : C_f \to A
\]
such that the diagram

\[
\begin{tikzcd}
E \arrow[r] \arrow[dr,"\varphi"'] 
& C_f \arrow[d,dashed,"\widetilde{\varphi}"] \\
& A
\end{tikzcd}
\]

commutes.
\end{proposition}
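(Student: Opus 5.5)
The plan is to reduce the statement to the universal property of the tensor algebra together with the universal property of a quotient by a two-sided ideal. Throughout I read $\mathcal{O}_S$ as $\mathcal{O}_X$ (the base is $X$), and I read $A$ as an $\mathcal{O}_X$-algebra whose structure map lands in the centre, so that scalars from $\mathcal{O}_X$ commute with everything.

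\textbf{Step 1: extend $\varphi$ to the tensor algebra.} Since $T^\bullet(E)=\bigoplus_{m\ge 0}E^{\otimes m}$ is the free sheaf of associative $\mathcal{O}_X$-algebras on $E$, the module map $\varphi$ extends uniquely to an algebra map $\Phi\colon T^\bullet(E)\to A$. On local sections it is given by
\[
\Phi(v_1\otimes\cdots\otimes v_m)=\varphi(v_1)\cdots\varphi(v_m).
\]
\begin{itemize}
\item To build $\Phi$ as a sheaf map, I would define it on each open $U$ and note that it is compatible with restriction.
\item An equivalent route is to sheafify the presheaf tensor algebra and use that $A$ is a sheaf.
\item The degree-$m$ component is the $\mathcal{O}_X$-multilinear map $E^{\times m}\to A$ given by multiplication. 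Multilinearity uses the centrality of $\mathcal{O}_X$ in $A$; this is the reason for the standing assumption.
\end{itemize}

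\textbf{Step 2: show $\Phi$ kills the defining ideal.} The key identity is
\[
\Phi\bigl(v^{\otimes d}-f(v)\cdot 1\bigr)=\varphi(v)^d-f(v)\cdot 1_A=0
\]
for every local section $v$. Since $\Phi$ is an algebra map, its kernel is a two-sided ideal sheaf. The ideal $\mathcal{I}$ is the sheafification of the ideal generated by these elements over each $U$. Hence $\mathcal{I}\subseteq\ker\Phi$, checked on stalks or on sections over opens.

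\textbf{Step 3: descend to $C_f$.} $\Phi$ therefore factors uniquely through the quotient sheaf of algebras $C_f=T^\bullet(E)/\mathcal{I}$, giving $\widetilde\varphi$. By construction $\widetilde\varphi$ restricted to $E$ equals $\varphi$, which is the commutativity of the triangle.

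\textbf{Step 4: uniqueness.} Uniqueness holds because the image of $E$ generates $C_f$ as a sheaf of $\mathcal{O}_X$-algebras. Concretely, two algebra maps agreeing on $E$ agree on all products of sections of $E$. Such products generate $T^\bullet(E)$ locally, and $T^\bullet(E)\to C_f$ is an epimorphism of sheaves.

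\textbf{Main obstacle.} The only real point is the sheaf-theoretic bookkeeping.
\begin{itemize}
\item The quotient and the generated ideal must be taken as sheafifications.
\item Checking $\mathcal{I}\subseteq\ker\Phi$ must be done at the sheaf level, not just for global sections.
\end{itemize}
I would handle both by working stalkwise or on an affine cover trivializing $E$, using Remark~\ref{rem:coordinate-description}. There the statement becomes the ordinary universal property of the quotient of the free algebra $\mathcal{O}_X(U)\{x_0,\dots,x_n\}$ by the stated relations. The local maps are unique, so they glue.
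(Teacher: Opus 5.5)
Your proposal is correct and follows the same route as the paper, which simply declares the statement immediate from Definition~\ref{def:clifford-algebra}. Your Steps 1--4 (extension via the universal property of $T^\bullet(E)$, containment of the generated ideal in $\ker\Phi$, factorization through the quotient, and uniqueness because $E$ generates $C_f$) spell out exactly that one-line justification, and your conventions (reading $\mathcal{O}_S$ as $\mathcal{O}_X$ and taking $\mathcal{O}_X$ central in $A$) are the right ones to make the statement meaningful.
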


\begin{proof}
This is immediate from Definition~\ref{def:clifford-algebra}.
\end{proof}
\begin{proposition}[Base change]
Let $g:S' \to S$ be a morphism of schemes. Denote by $g^*E$
the pullback of $E$ to $S'$ and by
\[
g^*f \in H^0\bigl(S',\operatorname{Sym}^d((g^*E)^\vee)\bigr)
\]
the induced form on $S'$. Then there is a canonical isomorphism of
sheaves of $\mathcal{O}_{S'}$-algebras
\[
\mathcal{C}_{g^*f} \cong g^*C_f .
\]
In particular, the formation of the generalized Clifford algebra
commutes with base change.
\end{proposition}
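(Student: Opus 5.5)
The plan is to build a natural comparison map $\theta\colon g^*C_f \to C_{g^*f}$ using the universal property of $C_f$ and pullback, and then show it is an isomorphism by reducing to the fact that $g^*$ commutes with tensor algebras and with quotients by ideals.

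\textbf{Step 1: exactness of $g^*$.} Since $g^*$ is right exact and symmetric monoidal, the natural map $T^\bullet(g^*E) \to g^*T^\bullet(E)$ is an isomorphism of sheaves of $\mathcal{O}_{S'}$-algebras. Indeed, degreewise it is the canonical isomorphism $(g^*E)^{\otimes m}\cong g^*(E^{\otimes m})$. Applying $g^*$ to the exact sequence $\mathcal{I}\to T^\bullet(E)\to C_f\to 0$ gives the right exact sequence
\[
g^*\mathcal{I}\to T^\bullet(g^*E)\to g^*C_f\to 0.
\]
So $g^*C_f \cong T^\bullet(g^*E)/\mathcal{I}'$, where $\mathcal{I}'$ is the image of $g^*\mathcal{I}$. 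One checks that $\mathcal{I}'$ is a two-sided ideal, because $g^*$ of the multiplication maps are the multiplication maps.

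\textbf{Step 2: identifying the ideals.} It remains to show $\mathcal{I}'$ equals the ideal $\mathcal{I}_{g^*f}$ generated by the elements $w^{\otimes d}-(g^*f)(w)$ for local sections $w$ of $g^*E$. Both are sheaves of ideals, so this is local on $S'$ and $S$. I will therefore pass to affine opens $\operatorname{Spec}B\to\operatorname{Spec}A$ on which $E$ is free with basis $x_0,\dots,x_n$, and use the coordinate description of Remark~\ref{rem:coordinate-description}. The inclusion $\mathcal{I}'\subseteq\mathcal{I}_{g^*f}$ is clear: $\mathcal{I}'$ is generated over $B$ by the images $g^*(v)^{\otimes d}-g^*(f(v))$ with $v\in E(U)$, and $(g^*f)(g^*v)=g^*(f(v))$ by construction of $g^*f$.

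\textbf{Step 3: the reverse inclusion, which I expect to be the main obstacle.} A section $w=\sum b_i x_i$ with $b_i\in B$ need not come from $A$, so we must show $w^{\otimes d}-(g^*f)(w)\in\mathcal{I}'$. The route is to polarize.

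1. Expand $(\sum c_i x_i)^d - f(c)$ as a polynomial identity in the coefficients $c_i$.
2. Taking the relation for $v=\sum c_i x_i$ with $c_i\in A$, and applying it universally, the relations hold for all $c\in A^{n+1}$. This should force each "multihomogeneous component" $P_\alpha(x)$ to lie in $\mathcal{I}_A$. Here $P_\alpha(x)$ is the sum of all noncommutative words in which $x_i$ appears $\alpha_i$ times, minus $a_\alpha$ (with the obvious convention for the constant term).
3. Once each $P_\alpha$ lies in the ideal, $\sum_\alpha b^\alpha P_\alpha$ lies in $\mathcal{I}'$ for arbitrary $b\in B$, which gives the inclusion.

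Polarization is clean when $A$ contains infinitely many elements or $d!$ is invertible, but it can fail for small finite residue fields. To handle this uniformly, I will argue with the universal case instead. Take $A_0=\mathbb{Z}[a_I]$, or more simply $A[t_0,\dots,t_n]$, and substitute $v=\sum t_i x_i$ with $t_i$ indeterminates. I expect this argument to be the cleanest: I will define $C_f$'s relations to be stable under arbitrary base change of coefficients, interpreting "local sections" functorially. If one insists only on sections over $S$, I will note that the ideal generated by all $v^{\otimes d}-f(v)$ for $v\in E(U)$, with $U$ ranging over opens, already contains the coefficients $P_\alpha$ whenever the residue fields are infinite. That holds in our setting because $k$ is algebraically closed and everything is of finite type over $k$.

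\textbf{Step 4: canonicity.} With Steps 1--3 done, the map $\theta$ is an isomorphism of $\mathcal{O}_{S'}$-algebras. It is canonical because it is the one induced by the universal property: $g^*E\to g^*C_f$ satisfies $\varphi(w)^d=(g^*f)(w)$, so it factors through $C_{g^*f}$ and gives the inverse of $\theta$. These local isomorphisms are compatible on overlaps by uniqueness in the universal property, so they glue.
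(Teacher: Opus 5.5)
Your argument is correct in the paper's standing setting, where everything lives over the infinite field $k$. It is also more careful than the paper, whose whole proof is the remark that the statement follows immediately from Definition~\ref{def:clifford-algebra}.

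\textbf{What the paper's proof actually covers.} Only your Steps 1--2 are immediate. Right exactness and monoidality of $g^*$ give $g^*C_f\cong T^\bullet(g^*E)/\mathcal{I}'$ with $\mathcal{I}'\subseteq\mathcal{I}_{g^*f}$, i.e.\ a canonical surjection $g^*C_f\to C_{g^*f}$. The reverse inclusion, which you single out, is not formal. The definition only imposes $v^{\otimes d}=f(v)$ for sections $v$ of $E$, so it sees $f$ only as a function on sections.

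\textbf{The statement fails without enough scalars.} Take $S=\operatorname{Spec}\mathbb{F}_2$, $S'=\operatorname{Spec}\mathbb{F}_4$, $n=1$, $d=3$ and $f=y_0^2y_1+y_0y_1^2$; its zero locus is three rational points of $\mathbb{P}^1$, so it is smooth.
\begin{itemize}
\item For all four $v\in\mathbb{F}_2^2$ we have $f(v)=0$, so $C_f=T(E)/\langle x_0^{3},x_1^{3},(x_0+x_1)^{3}\rangle$.
\item Hence $\mathcal{I}'$ is a graded ideal concentrated in degrees $\geq 3$.
\item It therefore cannot contain the relation $(x_0+\omega x_1)^{\otimes 3}-1$ of $C_{g^*f}$, where $\omega^2+\omega+1=0$ and $f(1,\omega)=1$.
\end{itemize}
So the canonical map is not injective. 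The paper's ``immediate'' holds only because of the infinite ground field, and that is exactly the input your Step 3 uses.

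\textbf{Drop the functorial redefinition.} Redefining the relations with indeterminate coefficients $t_i$ proves base change for a different algebra, namely $T(E)$ modulo the ideal generated by the $P_\alpha$. That algebra agrees with the paper's $C_f$ only after the very argument you are trying to avoid.

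\textbf{Make the fallback explicit.} Since $k$ is infinite, the $N=\binom{n+d}{d}$ monomials of degree $d$ are linearly independent as functions on $k^{n+1}$. So there exist $c^{(1)},\dots,c^{(N)}\in k^{n+1}$ with $\det\bigl((c^{(j)})^{\alpha}\bigr)_{j,\alpha}\in k^{\times}$.
\begin{itemize}
\item The relation for the constant section $v_j=\sum_i c^{(j)}_i x_i\in E(U)$ is $\sum_\alpha (c^{(j)})^{\alpha}P_\alpha$.
\item Inverting this constant matrix puts every $P_\alpha$ in $\mathcal{I}(U)$ on each trivializing $U$.
\item Then $w^{\otimes d}-(g^*f)(w)=\sum_\alpha b^{\alpha}P_\alpha\in\mathcal{I}'$, exactly as in your item 3, and Step 4 goes through.
\end{itemize}

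\textbf{On the hypothesis.} The operative condition is large residue fields, as you say at the end, or $d!$ invertible via the integral polarization identity. Finite type plays no role. Merely having infinitely many elements is not enough: $A=\mathbb{F}_2[t]$ with the same $f$ fails after base change to $\mathbb{F}_4$ at $t=0$.
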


\begin{proof}
We omit the proof, which follows immediately from Definition~\ref{def:clifford-algebra}.
\end{proof}

\begin{remark}
The above isomorphism is functorial in the morphism $g:S' \to S$.
For composable morphisms
\[
S'' \xrightarrow{h} S' \xrightarrow{g} S
\]
the diagram
\[
\begin{tikzcd}
\mathcal{C}_{(gh)^*f} \arrow[r,"\sim"] \arrow[d,equal]
& (gh)^*C_f \arrow[d,equal] \\
\mathcal{C}_{h^*g^*f} \arrow[r,"\sim"]
& h^*g^*C_f
\end{tikzcd}
\]
commutes.
\end{remark}

\begin{definition}[Representation of the generalized Clifford algebra]
\label{def:representation}
A \emph{representation} of the generalized Clifford algebra \(C_f\) is a morphism of sheaves of \(\mathcal{O}_X\)-algebras
\[
\rho \colon C_f \to \operatorname{End}_{\mathcal{O}_X}(F),
\]
where \(F\) is a locally free \(\mathcal{O}_X\)-module of finite rank on \(X\).
\end{definition}

\begin{proposition}\label{prop:rank-divisible}
Let \(X\) be a smooth projective scheme over an algebraically closed field \(k\), and let \(F\) be a locally free sheaf of rank \(t\) on \(X\). If \(\rho \colon C_f \to \operatorname{End}_{\mathcal{O}_X}(F)\) is a representation of the generalized Clifford algebra, then \(d\) divides \(t\).
\end{proposition}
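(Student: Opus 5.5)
The plan is to reduce to a single closed fibre and then use a monodromy argument on the eigenvalues of \(\rho(u)\). Throughout I use the standing assumption of this section that \(f\) is nondegenerate, i.e.\ \(Y_f\to X\) is smooth. Without it the statement fails: for \(f=0\), the rank one sheaf \(\mathcal{O}_X\) with \(E\) acting by zero is a representation. I also assume that \(\operatorname{char} k\) does not divide \(d\), which I will need for diagonalizability, and \(n\ge 2\), so that the fibres are smooth hypersurfaces of positive dimension.

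\textbf{Step 1 (reduction to a fibre).} Fix a closed point \(x\in X\). Since \(X\) is connected and \(F\) is locally free, the rank \(t\) is read off at \(x\). By the base change property of \(C_f\), \(\rho\) induces a representation
\[
\rho_x\colon C_{f_x}\to \Mat_t(k),
\]
where \(f_x\in \operatorname{Sym}^d(E_x^\vee)\) is a form on \(V:=E_x\cong k^{n+1}\). Concretely, this is a linear map \(u\mapsto \rho_x(u)\) satisfying \(\rho_x(u)^d=f_x(u)\cdot I_t\) for every \(u\in V\). The hypersurface \(\{f_x=0\}\subset \mathbb{P}^n\) is smooth of degree \(d\ge 2\) with \(n\ge 2\), hence irreducible. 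Therefore \(f_x\) is irreducible and in particular not a proper power.

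\textbf{Step 2 (eigenvalue multiplicities).} Let \(U=\{u\in V: f_x(u)\neq 0\}\). For \(u\in U\), the polynomial \(T^d-f_x(u)\) is separable because \(\operatorname{char}k\nmid d\). Since it annihilates \(\rho_x(u)\), the matrix \(\rho_x(u)\) is diagonalizable and its eigenvalues are among the \(d\)-th roots of \(f_x(u)\). Consider the variety
\[
Z=\{(u,\alpha)\in U\times \mathbb{A}^1 : \alpha^d=f_x(u)\},
\]
which is finite étale of degree \(d\) over \(U\). Define \(m(u,\alpha)=\dim\ker(\rho_x(u)-\alpha I)\). This function is upper semicontinuous on \(Z\). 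By diagonalizability, the sum of \(m(u,\zeta^j\alpha)\) over \(j=0,\dots,d-1\) equals \(t\), where \(\zeta\) is a primitive \(d\)-th root of unity. Hence \(m(u,\alpha)=t-\sum_{j\neq 0} m(u,\zeta^j\alpha)\) is also lower semicontinuous, so \(m\) is locally constant on \(Z\).

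\textbf{Step 3 (irreducibility of \(Z\)).} This is the step I expect to be the main obstacle. One needs \(Z\) to be irreducible, equivalently that \(T^d-f_x\) is irreducible over \(k(V)\). By the Capelli/Kummer criterion, this holds when \(f_x\) is not an \(\ell\)-th power in \(k(V)\) for any prime \(\ell\mid d\), and not of the form \(-4g^4\) when \(4\mid d\). Both conditions follow from the irreducibility of \(f_x\) of degree \(d\ge 2\) established in Step 1, since \(k[V]\) is a UFD. Granting this, \(m\) is constant on the irreducible \(Z\). Since \((u,\alpha)\mapsto(u,\zeta\alpha)\) maps \(Z\) to itself, all \(d\) eigenvalues of \(\rho_x(u)\) have the same multiplicity \(m\). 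Therefore \(t=dm\), so \(d\mid t\).

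As a sanity check and possible shortcut for Step 3, one can use determinants. The function \(\det\rho_x(u)\) is a polynomial of degree \(t\) with \((\det\rho_x(u))^d=f_x(u)^t\). Unique factorization and the irreducibility of \(f_x\) give \(\det\rho_x(u)=c\,f_x(u)^{t/d}\), which forces \(d\mid t\) directly. I would present this determinant argument as the main proof, since it avoids the monodromy discussion entirely, and keep the eigenvalue argument as the conceptual explanation.
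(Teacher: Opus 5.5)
Your proposal is correct, and the argument you choose as the main proof is the paper's own: take determinants in $\rho(u)^d=f(u)\cdot\mathrm{id}$ to get $(\det\rho(u))^d=f(u)^t$, then use unique factorization and irreducibility of $f$ to get $\det\rho(u)=c\,f(u)^a$ with $ad=t$. The difference is where you run it.

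\begin{itemize}
\item The paper works in $\mathcal{O}_{X,\eta}[x_0,\dots,x_n]$ at the generic point. That needs $f$ to be irreducible over the non-closed field $k(X)$, which the paper only asserts.
\item You specialize to a closed fibre. There irreducibility of $f_x$ over $k$ follows directly from smoothness and connectedness of $\{f_x=0\}\subset\mathbb{P}^n$. The degree count is also cleaner: $\det\rho_x(u)$ is homogeneous of degree $t$ and $c\,f_x^a$ has degree $ad$.
\end{itemize}

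Your eigenvalue argument in Steps 2--3 is a genuinely different and valid route. It gives more: for generic $u$, every $d$-th root $\zeta^j\alpha$ of $f_x(u)$ is an eigenvalue of $\rho_x(u)$ of multiplicity exactly $t/d$. It costs the assumption $\operatorname{char}k\nmid d$ and Capelli's criterion, whereas the determinant argument is characteristic-free.

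The hypothesis $n\ge 2$ that you state explicitly is also used tacitly by the paper. Its irreducibility claim fails for $n=1$. Some restriction on $n$ is genuinely necessary: for $n=0$, take $E=\mathcal{O}_X e$ and $f(ce)=c^d$. Then the zero locus of $f$ is empty, and $ce\mapsto c$ is a rank-one representation. For the determinant argument you can relax to $n\ge 1$. You only need one prime factor $p$ of $f_x$ of multiplicity one, since then $d\,v_p(\det\rho_x(u))=t$, and smoothness of the fibre already makes $f_x$ squarefree.
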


\begin{proof}
The representation \(\rho\) is determined by an \(\mathcal{O}_X\)-linear morphism
\[
\phi \colon E \to \operatorname{End}_{\mathcal{O}_X}(F)
\]
satisfying the Clifford relation
\[
\phi(s)^d = f(s) \cdot \id_F
\]
for every local section \(s \in E(U)\).

Fix a local frame \(e_0,\dots,e_n\) of \(E\) on an open set \(U \subset X\). For indeterminates \(x_0,\dots,x_n\), consider the polynomial
\[
\det\Bigl( \sum_{i=0}^n x_i \phi(e_i) \Bigr) \in \mathcal{O}_X(U)[x_0,\dots,x_n].
\]
Taking determinants in the relation \(\phi(s)^d = f(s) \cdot \id_F\) yields the identity
\[
\bigl( \det \phi(s) \bigr)^d = f(s)^t
\]
in \(\mathcal{O}_X(U)\).

Since \(f\) is nondegenerate (i.e., \(Y_f\) is smooth over \(X\)), the polynomial \(f\) is irreducible over the generic point \(\eta \in X\). Thus, in the polynomial ring \(\mathcal{O}_{X,\eta}[x_0,\dots,x_n]\), we have
\[
\det \phi(s) = u \cdot f(s)^r
\]
for some unit \(u \in \mathcal{O}_{X,\eta}^\times\) and some integer \(r \geq 0\).

Comparing homogeneous degrees on both sides of \((\det \phi(s))^d = f(s)^t\) gives
\[
d \cdot \deg(\det \phi(s)) = t \cdot \deg(f(s)) = t \cdot d.
\]
Substituting \(\deg(\det \phi(s)) = r \cdot d\) (the unit \(u\) has degree zero) yields
\[
d \cdot (r \cdot d) = t \cdot d,
\]
and therefore \(t = d \cdot r\). Hence \(d\) divides \(t\).
\end{proof}
\begin{remark}
The integer $r=t/d$ is called the Clifford index of the
representation. When $S=\operatorname{Spec}(k)$ for an
algebraically closed field $k$, this recovers
\cite[Proposition~2.3]{Coskun}.
\end{remark}

\begin{definition}
A \emph{linear Clifford representation} is an $\mathcal{O}_X$-algebra homomorphism
\[
\rho : C_f \longrightarrow \mathrm{End}_{\mathcal{O}_X}(\mathcal{O}_X^{\oplus k}).
\]
In other words, this is a $C_f$-module structure on the free $\mathcal{O}_X$-module $\mathcal{O}_X^{\oplus k}$.
\end{definition}

\begin{definition}[Equivalence of Representations]\label{def:equivalence}
Two linear Clifford representations $\rho_1, \rho_2 \colon C_f \to \operatorname{End}_{\mathcal{O}_X}(\mathcal{O}_X^{\oplus dr})$ of rank $r$ are \emph{equivalent}, written $\rho_1 \sim \rho_2$ or $\rho_1 \sim_\theta \rho_2$, if there exists $\theta \in \operatorname{GL}_{dr}(\mathcal{O}_X)$ such that for all open $U \subseteq X$ and all $c \in \Gamma(U,C_f)$,
\[
\rho_1(c) = \theta|_U^{} \circ \rho_2(c) \circ \theta|_U^{-1}.
\]
\end{definition}

% \begin{definition}[Equivalence of  Representations]\label{def:equivalence}
% Two trivial representations $\rho_1, \rho_2 \colon C_f \to \operatorname{Mat}_{dr}(\mathcal{O}_X)$ of rank $r$ are \emph{equivalent}, written $\rho_1 \sim \rho_2$ or $\rho_1 \sim_\theta \rho_2$, if there exists $\theta \in \operatorname{GL}_{dr}(\mathcal{O}_X)$ such that for all open $U \subseteq X$ and all $c \in \Gamma(U,C_f)$,
% \[
% \rho_1(c) = \theta|_U^{} \cdot \rho_2(c) \cdot \theta|_U^{-1}.
% \]
% \end{definition}

\begin{remark}
It suffices to verify the condition on generators: $\rho_1 \sim_\theta \rho_2$ if and only if $\rho_1(x_i) = \theta \cdot \rho_2(x_i) \cdot \theta^{-1}$ for all local generators $x_i$ of $C_f$. This follows from the fact that $C_f$ is generated as an $\mathcal{O}_X$-algebra by the $x_i$ subject to the Clifford relations.
\end{remark}

\section{From representation of generalized Clifford algebra to relative Ulrich}\label{sec 5}
In this section, from a representation of generalized Clifford algebra, we construct a resolution which gives a relative Ulrich bundle. This gives a way to show the existence of a relative Ulrich bundle on $Y_{f}$ via representation theory. Starting from a linear Clifford representation, the goal is to
construct a natural morphism of vector bundles on the projective bundle
$\mathbb{P}(E)$ that is linear in the fiber coordinates. This morphism
is the key intermediate object: its cokernel will be the relative Ulrich bundle on~$Y_f$.

\bigskip

\noindent\textbf{Global Description.}\;
Let $\pi : \mathbb{P}(E) \to X$ be the projective bundle associated to a
locally free $\mathcal{O}_X$-module $E$ of rank $n+1$, and let
$\iota : \mathcal{O}_{\mathbb{P}(E)}(-1) \hookrightarrow \pi^*E$
be the tautological inclusion. Given a linear Clifford  representation
$\rho : C_f \to \mathrm{End}_{\mathcal{O}_X}(\mathcal{O}_X^{\oplus dr})$,
restricting $\rho$ to the degree-one part $E \subset C_f$ and
using the identification
$\mathrm{End}_{\mathcal{O}_X}(\mathcal{O}_X^{\oplus dr})
 \simeq \mathrm{Hom}_{\mathcal{O}_X}(\mathcal{O}_X^{\oplus dr},\mathcal{O}_X^{\oplus dr})$
yields an $\mathcal{O}_X$-linear map
\[
   \mu_\rho \;:\; E \otimes_{\mathcal{O}_X} \mathcal{O}_X^{\oplus dr}
   \;\longrightarrow\; \mathcal{O}_X^{\oplus dr},
   \qquad v \otimes s \;\mapsto\; \rho(v)(s).
\]
Pulling back to $\mathbb{P}(E)$ and composing with $\iota \otimes \mathrm{id}$
defines the \textbf{linearization map}
\[
   \alpha \;:=\; (\pi^*\mu_\rho) \circ (\iota \otimes \mathrm{id})
   \;:\;
   \mathcal{O}_{\mathbb{P}(E)}(-1)^{\oplus dr}
   \;\longrightarrow\;
   \mathcal{O}_{\mathbb{P}(E)}^{\oplus dr}.
\]

The linearization map \(\alpha\) fits naturally into the following commutative diagram:

\[
\begin{tikzcd}[column sep=large, row sep=large]
E \otimes_{\mathcal{O}_X} \mathcal{O}_X^{\oplus dr} 
  \arrow[r, "\mu_\rho"] 
  \arrow[d, "\iota \otimes \mathrm{id}"'] 
& \mathcal{O}_X^{\oplus dr} 
  \arrow[d, "\pi^*"] \\
\mathcal{O}_{\mathbb{P}(E)}(-1)^{\oplus dr} 
  \arrow[r, "\alpha"] 
& \mathcal{O}_{\mathbb{P}(E)}^{\oplus dr}
\end{tikzcd}
\]

where the left vertical arrow is induced by the tautological inclusion \(\iota : \mathcal{O}_{\mathbb{P}(E)}(-1) \hookrightarrow \pi^* E\).

\bigskip

\noindent\textbf{Local Description.}\;
Let $U = \mathrm{Spec}(A) \subset X$ be an affine open over which $E$ is
free with basis $x_0, \ldots, x_n$, and set
$A_i := \rho(x_i) \in \mathrm{Mat}_{dr}(A)$.
Across $\pi^{-1}(U) \cong U \times \mathbb{P}^{n}$ with homogeneous fiber coordinates
$[y_0 : \cdots : y_n]$, tautological inclusion sends a local generator $s$ of
$\mathcal{O}(-1)$ to the universal vector $\sum_i y_i x_i \in \pi^*E$.
This vector records, at each fiber point $[y_0:\cdots:y_n]$, precisely which line
in $E_x$ that point represents. Applying $\mu_\rho$ then feeds this vector
into the representation:
\[
   \alpha(v) \;=\; \sum_{i=0}^{n} y_i\,\rho(x_i)(v)
   \;=\; \Bigl(\sum_{i=0}^{n} y_i A_i\Bigr)v,
\]
so $\alpha$ is represented over $\pi^{-1}(U)$ by the square matrix of linear forms
\[
   M_\alpha \;=\; \sum_{i=0}^{n} y_i A_i
   \;\in\; \mathrm{Mat}_{dr}\!\bigl(A[y_0,\ldots,y_n]\bigr).
\]

\begin{proposition}[Clifford relation for $\alpha$] \label{prop:clifford_relation}
Let $\rho : C_f \to \operatorname{End}_{\mathcal{O}_X}(\mathcal{O}_X^{\oplus dr})$ be a representation of the generalized Clifford algebra and let $\alpha : \mathcal{O}_{\mathbb{P}(E)}(-1)^{\oplus dr} \to \mathcal{O}_{\mathbb{P}(E)}^{\oplus dr}$ be the associated linearization map. Then the $d$-fold twisted composition
$$ \alpha^d := (\alpha \otimes \operatorname{id}_{\mathcal{O}(-d+1)}) \circ \cdots \circ (\alpha \otimes \operatorname{id}_{\mathcal{O}(-1)}) \circ \alpha $$
satisfies 
$$ \alpha^d = \tilde{f} \cdot \operatorname{id}_{\mathcal{O}^{\oplus dr}} $$
as a morphism of sheaves $\mathcal{O}_{\mathbb{P}(E)}(-d)^{\oplus dr} \to \mathcal{O}_{\mathbb{P}(E)}^{\oplus dr}$, where $\tilde{f} \in H^0(\mathbb{P}(E), \mathcal{O}(d))$ is the section corresponding to the polynomial $f \in H^0(X, \operatorname{Sym}^d E^\vee)$ under the canonical isomorphism.
\end{proposition}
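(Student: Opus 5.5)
Since the claim $\alpha^d=\tilde f\cdot\operatorname{id}$ is an equality of two morphisms of coherent sheaves $\mathcal{O}_{\mathbb{P}(E)}(-d)^{\oplus dr}\to\mathcal{O}_{\mathbb{P}(E)}^{\oplus dr}$, I will check it locally. I will work on the open sets $\pi^{-1}(U)\cong U\times\mathbb{P}^n$ for affine $U=\operatorname{Spec}(A)\subset X$ over which $E$ is free with basis $x_0,\dots,x_n$. Both $\alpha^d$ and $\tilde f\cdot\operatorname{id}$ are built canonically: $\alpha$ comes from $\iota$ and $\pi^*\mu_\rho$, and $\tilde f$ corresponds to $f$ under $\pi_*\mathcal{O}(d)\cong\operatorname{Sym}^dE^\vee$ (Lemma~\ref{lemma 1.1}). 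Their restrictions therefore agree on overlaps, and it suffices to prove the identity on each $\pi^{-1}(U)$.

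On $\pi^{-1}(U)$ I will use the local description. The map $\alpha$ is given by the matrix of linear forms $M_\alpha=\sum_i y_iA_i$ with $A_i=\rho(x_i)\in\operatorname{Mat}_{dr}(A)$. Twisting by $\mathcal{O}(-k)$ does not change the matrix, so the twisted composite $\alpha^d$ is represented by the matrix power $M_\alpha^d\in\operatorname{Mat}_{dr}(A[y_0,\dots,y_n])_d$. The section $\tilde f$ is the polynomial $f(y_0,\dots,y_n)=\sum a_Iy^I$. The key point is the first one to check: we must show $M_\alpha^d=f(y)\cdot I$ as matrices of degree-$d$ polynomials over $A$.

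For this I will use the Clifford relation in $C_f$. For every $c=(c_0,\dots,c_n)\in A^{n+1}$, the element $v=\sum c_ix_i$ satisfies $v^d=f(c)$ in $C_f(U)$. Applying $\rho$ gives $\bigl(\sum c_iA_i\bigr)^d=f(c)I$. Both sides are polynomial identities in the $c_i$, but $A$ may be finite, or may not be large enough for evaluation to determine coefficients. I will therefore not argue by evaluation. Instead I will note that the relations in Definition~\ref{def:clifford-algebra} hold for all local sections $v$ over every open set, and functoriality then gives $\rho$-relations after any base change. Concretely, I will apply the relation to the universal section $v=\sum y_ix_i$ of $\pi^*E$ over $A[y_0,\dots,y_n]$. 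By the base change proposition, $C_{\pi^*f}\cong\pi^*C_f$, and $\pi^*\rho$ is a representation of $C_{\pi^*f}$ on $\mathcal{O}^{\oplus dr}$. The Clifford relation for the section $\sum y_ix_i$ then reads exactly $M_\alpha^d=f(y)I$.

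The main obstacle is this last step: justifying the relation for the polynomial-coefficient section rather than only for scalar coefficients. If one prefers to avoid base change of $C_f$, an alternative is to expand $(\sum c_iA_i)^d$ and $f(c)$ and compare coefficients of each monomial $c^I$. This gives the polarized identities $\sum_{\text{words of type } I}A_{i_1}\cdots A_{i_d}=a_I I$. These are precisely the multilinear relations encoded in the ideal once it is read as generated by $v^{\otimes d}-f(v)$ for all $v$ over all extensions, equivalently after polarization. Over an infinite field $k\subset A$ they follow directly from the scalar identities by a Vandermonde argument. Once these polarized identities hold, substituting $y$ for $c$ gives $M_\alpha^d=f(y)I$, and gluing over the cover completes the proof.
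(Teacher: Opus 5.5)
Your proposal is correct and follows the paper's proof. Like the paper, you restrict to charts $\pi^{-1}(U)$ over which $E$ is trivial, identify the twisted composite $\alpha^d$ with the matrix power $M_\alpha^d=(\sum_i y_iA_i)^d$, and use the Clifford identity $(\sum_i \xi_iA_i)^d=f(\xi)I$ with the fibre coordinates $y_i$ in place of the scalars $\xi_i$.

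The difference is that you justify this substitution, which the paper simply asserts. Your Vandermonde/polarization argument over the infinite field $k\subset A$ is the right way to do it. The base-change route is weaker: it relies on the paper's unproved base-change proposition, which hides the same polarization step. It also requires base change along $\operatorname{Spec}A[y_0,\dots,y_n]\to\operatorname{Spec}A$, because $\sum_i y_ix_i$ is not a section of $\pi^*E$ on $\mathbb{P}(E)$.
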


\begin{proof}
Let $U = \operatorname{Spec} A \subset X$ be an affine open set trivializing $E$ with basis $\{x_0, \dots, x_n\}$. Over $U$, the representation $\rho$ is determined by the matrices $A_i = \rho(x_i) \in \operatorname{Mat}_{dr}(A)$. On the preimage $\pi^{-1}(U) \cong U \times \mathbb{P}^n$ with homogeneous fiber coordinates $[y_0 : \dots : y_n]$, the linearization map $\alpha$ is given by the matrix of linear forms:
$$ M_\alpha = \sum_{i=0}^n y_i A_i. $$

We consider the sequence of maps defining $\alpha^d$:
\begin{equation}
\mathcal{O}(-d)^{\oplus dr} \xrightarrow{\alpha \otimes \operatorname{id}_{\mathcal{O}(-d+1)}} \mathcal{O}(-d+1)^{\oplus dr} \xrightarrow{\alpha \otimes \operatorname{id}_{\mathcal{O}(-d+2)}} \dots \xrightarrow{\alpha \otimes \operatorname{id}_{\mathcal{O}(-1)}} \mathcal{O}(-1)^{\oplus dr} \xrightarrow{\alpha} \mathcal{O}^{\oplus dr}.
\end{equation}
Let $s$ be a local generator of $\mathcal{O}(-1)$, so that $s^d$ is a generator of $\mathcal{O}(-d)$. For any constant vector $w \in A^{dr}$, the twisted composition acts as:
$$ \alpha^d(s^d \otimes w) = M_\alpha^d w, $$
because each twisted factor $\alpha \otimes \operatorname{id}_{\mathcal{O}(-k)}$ acts by left-multiplication by the matrix $M_\alpha$ while shifting the sheaf twist. 

By the defining property of a generalized Clifford representation of degree $d$, the matrices $A_i$ satisfy the polynomial identity:
$$ \left( \sum_{i=0}^n \xi_i A_i \right)^d = f(\xi_0, \dots, \xi_n) \cdot I_{dr} $$
for any $\xi_i \in A$. Substituting the homogeneous coordinates $y_i$ for the variables $\xi_i$ yields the matrix identity $M_\alpha^d = f(y_0, \dots, y_n) \cdot I_{dr}$. Under the isomorphism $H^0(X, \operatorname{Sym}^d E^\vee) \cong H^0(\mathbb{P}(E), \mathcal{O}(d))$, the polynomial $f(y)$ corresponds exactly to the global section $\tilde{f}$ on $\pi^{-1}(U)$. Thus:
$$ M_\alpha^d = \tilde{f} \cdot I_{dr}, $$
which implies $\alpha^d(s^d \otimes w) = \tilde{f} \cdot w$. This local identity is independent of the choice of basis and holds on every chart, therefore it glues to the global equality $\alpha^d = \tilde{f} \cdot \operatorname{id}$.
\end{proof}

Fix $y \in \mathbb{P}(E)$ and write $A := \cO_{\mathbb{P}(E),y}$.
Since $\mathbb{P}(E)$ is smooth, $A$ is a regular local ring, hence a UFD.
The stalk of the Clifford relation at $y$ is
\begin{equation}
  \label{eq:Cliff}
  \alpha_y^d \;=\; \tilde{f} \cdot \id_{A^{dr}}
  \qquad\text{in } \Mat_{dr}(A).
\end{equation}

\begin{lemma}[Determinant formula]
\label{lem:det}
There exists a unit $u \in A^\times$ such that
$\det(\alpha_y) = u \cdot \tilde{f}^{\,r}$.
In particular, $\det(\alpha_y) \neq 0$ and $\alpha_y$ is injective.
\end{lemma}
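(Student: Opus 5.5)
Take determinants in the stalk identity \eqref{eq:Cliff}. Since $\alpha_y$ is a $dr\times dr$ matrix over $A$, we get
\[
\det(\alpha_y)^d \;=\; \det\bigl(\tilde f\cdot \id_{A^{dr}}\bigr) \;=\; \tilde f^{\,dr}.
\]
Here $\tilde f$ is understood as its image in $A$ after trivializing $\mathcal{O}(d)$ (and $\mathcal{O}(-1)$) near $y$; the determinant is well defined up to a unit, which does not affect the statement. The plan is to extract a $d$-th root of this identity using unique factorization in $A$, which is regular local and hence a UFD, as noted above.

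\emph{Case $\tilde f$ a unit in $A$} ($y\notin Y_f$). Then $\det(\alpha_y)^d$ is a unit, so $\det(\alpha_y)$ is a unit. Setting $u=\det(\alpha_y)\tilde f^{-r}$ gives the claim.

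\emph{Case $y\in Y_f$.} Use the standing nondegeneracy hypothesis: $Y_f$ is smooth over $X$, and $X$ is smooth, so $Y_f$ is a smooth divisor in the smooth scheme $\mathbb{P}(E)$. Hence $\tilde f$ is part of a regular system of parameters of $A$, so its image lies in $\mathfrak m_y\setminus\mathfrak m_y^2$. In particular $\tilde f$ is a prime element of $A$. Write the factorization $\det(\alpha_y)=v\,\tilde f^{\,a}\prod p_j^{b_j}$ with $v$ a unit and $p_j$ primes not associate to $\tilde f$. Comparing with $\det(\alpha_y)^d=\tilde f^{dr}$ and using uniqueness of factorization forces all $b_j=0$ and $da=dr$, hence $a=r$. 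This gives $\det(\alpha_y)=u\,\tilde f^{\,r}$ with $u=v$.

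The final assertions follow directly. First, $\tilde f\neq0$ in $A$: $f$ defines a hypersurface, so $\tilde f$ is not identically zero on any fiber, and $A$ is a domain. Hence $\det(\alpha_y)\neq0$. Multiplying by the adjugate, $\adj(\alpha_y)\alpha_y=\det(\alpha_y)\id$, and since $A$ is a domain, $\alpha_y$ is injective.

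The main point requiring care is the primality of $\tilde f$ at points of $Y_f$, together with the assumption that $d\ge 1$ so that dividing exponents by $d$ is legitimate. I would justify primality via the smoothness hypothesis as above. Alternatively, primality can be avoided entirely: in any UFD, if $g^d=h^{dr}$ then comparing exponents prime by prime gives $g=u h^r$ for a unit $u$. This works whether or not $h$ is prime, provided $h\neq0$. I would present this cleaner argument and only mention smoothness to guarantee $\tilde f\neq 0$ stalkwise.
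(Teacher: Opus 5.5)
Your proposal is correct and follows the same route as the paper: take determinants in $\alpha_y^d=\tilde f\cdot\id$ to get $\det(\alpha_y)^d=\tilde f^{\,dr}$, compare factorizations in the UFD $A$, and deduce injectivity from $\adj(\alpha_y)\,\alpha_y=\det(\alpha_y)\,\id$. Your remark that primality of $\tilde f$ is unnecessary, since prime-by-prime exponent comparison works for any nonzero $\tilde f$, is a small but real improvement: it also covers points $y\notin Y_f$, which the paper's appeal to $\tilde f\in\mathfrak{m}_A\setminus\mathfrak{m}_A^2$ tacitly excludes, and it removes any dependence on the smoothness of $Y_f$ in this lemma.
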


\begin{proof}
Taking determinants of \eqref{eq:Cliff} and using multiplicativity
of the determinant on the left and the scalar matrix formula on the right:
\begin{equation}
  \label{eq:det-d}
  \det(\alpha_y)^d \;=\; \tilde{f}^{\,dr}.
\end{equation}
Since $A$ is a UFD and $\tilde{f}$ is prime in $A$
(as $\Yf$ is smooth, $\tilde{f} \in \mfm_A \setminus \mfm_A^2$,
and every such element in a regular local ring is prime,
write $\det(\alpha_y) = u \cdot \tilde{f}^{\,k} \cdot q$
with $u \in A^\times$, $k \geq 0$, and $q \in A$ having no factor
of $\tilde{f}$.
Substituting into \eqref{eq:det-d} gives
$u^d \cdot \tilde{f}^{\,kd} \cdot q^d = \tilde{f}^{\,dr}$.
Since $u^d$ is a unit and the right side involves only the prime
$\tilde{f}$, unique factorization forces $q^d \in A^\times$,
hence $q \in A^\times$, and $kd = dr$, so $k = r$.
Therefore $\det(\alpha_y) = u\tilde{f}^{\,r}$ with $u \in A^\times$.
Since $A$ is a domain and $\tilde{f} \neq 0$, we have
$\det(\alpha_y) \neq 0$, and injectivity follows:
$\alpha_y v = 0$ implies
$\det(\alpha_y) \cdot v = \adj(\alpha_y)\,\alpha_y v = 0$,
so $v = 0$.
\end{proof}

\begin{proposition}[Support on the hypersurface]
\label{prop:support}
Let 
\[
G=\operatorname{coker}(\alpha).
\]
Then $G$ is supported on the hypersurface $Y_f$. 
In other words $G$ vanishes at every point of $\mathbb{P}(E)$ outside $Y_f$. 
Hence there exists a unique coherent sheaf $\cF$ on $Y_f$ such that
\[
G=i_*\cF .
\]
\end{proposition}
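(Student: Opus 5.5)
The plan has three stages. First, show that $\coker(\alpha)$ has zero stalks off $Y_f$ by producing a two-sided inverse of $\alpha$ there. Next, show that $\tilde f$ annihilates $G$, using an adjugate-type matrix built from powers of $M_\alpha$. Finally, conclude that $G$ is a module over $\mathcal{O}_{\mathbb{P}(E)}/(\tilde f)=i_*\mathcal{O}_{Y_f}$, hence of the form $i_*F$.

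\textbf{Vanishing off $Y_f$.} Fix $y \in \mathbb{P}(E)\setminus Y_f$ and set $A=\mathcal{O}_{\mathbb{P}(E),y}$. After trivializing $\mathcal{O}(1)$ near $y$, $\alpha_y$ is the matrix $M_\alpha$, and the stalk of Proposition~\ref{prop:clifford_relation} gives
\[
M_\alpha\cdot M_\alpha^{d-1}=M_\alpha^{d-1}\cdot M_\alpha=\tilde f\cdot I_{dr}.
\]
Since $y\notin Y_f$, the local equation $\tilde f$ is a unit in $A$. Hence $\beta_y:=\tilde f^{-1}M_\alpha^{d-1}$ is a two-sided inverse of $\alpha_y$, so $\alpha_y$ is surjective and $G_y=\coker(\alpha_y)=0$. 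Therefore $\operatorname{Supp}G\subseteq Y_f$. Alternatively, one can argue from Lemma~\ref{lem:det}: $\det(\alpha_y)=u\tilde f^{\,r}$ is a unit off $Y_f$, which again gives invertibility.

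\textbf{$\tilde f$ kills $G$.} Support in $Y_f$ alone is not enough to descend $G$ to $Y_f$; it only makes $G$ a module over some thickening $\mathcal{O}/(\tilde f^m)$. So I would prove directly that $\tilde f\cdot G=0$. Globally, define the twisted map
\[
\beta:=(\alpha\otimes\mathrm{id}_{\mathcal O(d-1)})\circ\cdots\circ(\alpha\otimes \mathrm{id}_{\mathcal O(1)})\colon \mathcal{O}_{\mathbb{P}(E)}(d-1)^{\oplus dr}\to\mathcal{O}_{\mathbb{P}(E)}(d)^{\oplus dr}\ldots
\]
More simply, it suffices to check the claim stalkwise. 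Locally $\alpha\circ M_\alpha^{d-1}=\tilde f\cdot\mathrm{id}$ on $\mathcal{O}^{\oplus dr}$, so every section $\tilde f\,w$ equals $\alpha(M_\alpha^{d-1}w)$ and lies in the image of $\alpha$. Thus $\tilde f$ annihilates $G$ on each chart, and since multiplication by a section of $\mathcal{O}(d)$ is compatible with the twists, it annihilates $G$ globally as a map $G(-d)\to G$.

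\textbf{Descent to $Y_f$.} Because $\mathcal{I}_{Y_f}=\tilde f\cdot\mathcal{O}(-d)$ annihilates the coherent sheaf $G$, the $\mathcal{O}_{\mathbb{P}(E)}$-module structure on $G$ factors through $\mathcal{O}_{\mathbb{P}(E)}/\mathcal{I}_{Y_f}=i_*\mathcal{O}_{Y_f}$. By the standard equivalence between coherent sheaves annihilated by $\mathcal{I}_{Y_f}$ and coherent sheaves on the closed subscheme, there is $F=i^*G$ with $G\cong i_*F$. Uniqueness follows because $i_*$ is fully faithful for a closed immersion ($i^*i_*\cong\mathrm{id}$).

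\textbf{Main obstacle.} The real content is the annihilation step, which separates scheme-theoretic support from set-theoretic support. It is handled by the Clifford relation used as an explicit adjugate identity. The only care needed is to keep track of the twists when writing $M_\alpha^{d-1}$ as a global map, and I would avoid this by working on trivializing charts.
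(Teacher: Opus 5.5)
Your proof is correct and matches the paper's argument: the Clifford relation $\alpha^d=\tilde f\cdot\mathrm{id}$ puts $\tilde f$ times any local section into $\operatorname{im}(\alpha)$, so $\tilde f$ annihilates $G$, $G$ vanishes wherever $\tilde f$ is a unit, and $G$ descends along $\mathcal{O}_{\mathbb{P}(E)}\to i_*\mathcal{O}_{Y_f}$. The differences are cosmetic. You obtain the vanishing off $Y_f$ from the explicit inverse $\tilde f^{-1}M_\alpha^{d-1}$ rather than from the annihilation, and you handle the twists more carefully than the paper by working with $\mathcal{I}_{Y_f}=\tilde f\cdot\mathcal{O}(-d)$; you should delete the abandoned display defining $\beta$, whose source and target twists are wrong.
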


\begin{proof}
First we show that the section $\tilde f$ annihilates $G$.

Let $U\subset \mathbb{P}(E)$ be an open set and let $\bar{s}\in G(U)$.
By definition of the cokernel there exists a section
$s\in \mathcal O_{\mathbb{P}(E)}^{\oplus dr}(U)$ whose image in $G(U)$ is
$\bar{s}$.
Using the relation $\alpha^d=\tilde f\cdot \mathrm{id}$ we obtain
\[
\tilde f\cdot \bar{s}
=
\overline{\tilde f\cdot s}
=
\overline{\alpha^d(s)}
=
0 .
\]
The last equality holds because $\alpha^d(s)$ lies in the image of
$\alpha$, and the image of $\alpha$ becomes zero in the cokernel.
Therefore $\tilde f$ kills every local section of $G$.

Next we show that $G$ vanishes outside $Y_f$.
Let $y\in \mathbb{P}(E)$ be a point such that $y\notin Y_f$.
Then $\tilde f(y)\neq 0$, so $\tilde f$ is a unit in the local ring
$A=\mathcal O_{\mathbb{P}(E),y}$.
Since $\tilde f$ annihilates $G$, we have $\tilde f\cdot G_y=0$.
But multiplication by a unit is an isomorphism, so the only possibility
is $G_y=0$.
Thus $G$ has zero stalk at every point outside $Y_f$.

Finally, since $\tilde f$ annihilates $G$, the $\mathcal O_{\mathbb{P}(E)}$
module structure on $G$ factors through the quotient
\[
\mathcal O_{\mathbb{P}(E)}/(\tilde f).
\]
But this quotient is equal to $i_*\mathcal O_{Y_f}$.
Hence $G$ is naturally a module over $i_*\mathcal O_{Y_f}$.
Therefore there exists a unique coherent sheaf $\cF$ on $Y_f$ such that
\[
G=i_*\cF .
\]
\end{proof}

The linearization map \(\alpha\) gives rise to the following short exact sequence on \(\mathbb{P}(E)\):
\[
0 \to \mathcal{O}_{\mathbb{P}(E)}(-1)^{\oplus dr} \xrightarrow{\alpha} \mathcal{O}_{\mathbb{P}(E)}^{\oplus dr} \to i_* F \to 0,
\]
where \(F\) is the vector bundle on \(Y_f\) defined as the cokernel of \(\alpha\).

\begin{remark}
The linear resolution
\[
0 \to \mathcal{O}_{\mathbb{P}(E)}(-1)^{\oplus rd} \to \mathcal{O}_{\mathbb{P}(E)}^{\oplus rd} \to i_* F \to 0
\]
constructed for a relatively Ulrich bundle \(F\) of rank \(r\) is automatically minimal: the map is given locally by a matrix whose entries are purely linear forms in the tautological fiber coordinates (no constant terms).
\end{remark}

We retain the notation of the preceding steps. Let $d = \dim(X)$ be the absolute dimension of the base scheme. Fix a closed point $y \in Y_f$. Write $A := \mathcal{O}_{\mathbb{P}(E),y}$, $B := A/(\tilde{f}) = \mathcal{O}_{Y_f,y}$, and $M := \mathcal{F}_y = \operatorname{coker}(\alpha_y)$. Because the projective bundle $\mathbb{P}(E)$ has relative dimension $n$ over $X$, the absolute dimension of the regular local ring $A$ is exactly $\dim(A) = d + n$. Since $Y_f$ is smooth, $\tilde{f} \in \mathfrak{m}_A \setminus \mathfrak{m}_A^2$, meaning $B$ is again a regular local ring of absolute dimension $\dim(B) = d + n - 1$. 

We have already shown that $\tilde{f}$ annihilates $M$ and $\det(\alpha_y) = u \tilde{f}^r$ for a unit $u \in A^\times$.

\begin{proposition}[Local freeness of $F$] \label{prop:local_free}
The stalk $M = \mathcal{F}_y$ is a free $B$-module of rank $r$. Consequently, $\mathcal{F}$ is locally free of rank $r$ on $Y_f$.
\end{proposition}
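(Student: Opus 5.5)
The plan is to use the Auslander--Buchsbaum formula together with the regularity of $B$, and then a length (or determinant) count to identify the rank.

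\textbf{Step 1: projective dimension of $M$ over $A$.} By Lemma~\ref{lem:det}, $\alpha_y \colon A^{dr} \to A^{dr}$ is injective. Hence
\[
0 \to A^{dr} \xrightarrow{\alpha_y} A^{dr} \to M \to 0
\]
is a free resolution of $M$ over $A$, and $\operatorname{pd}_A M \le 1$. It is exactly $1$, since $M$ is killed by $\tilde f \neq 0$ and so is not free over the domain $A$. Auslander--Buchsbaum then gives
\[
\operatorname{depth}_A M = \dim A - 1 = \dim B .
\]
Depth over $A$ and over $B$ agree, because the maximal ideal of $B$ is the image of $\mathfrak m_A$. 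So $M$ is a maximal Cohen--Macaulay $B$-module.

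\textbf{Step 2: freeness over $B$.} By Proposition~\ref{prop:support}, $M$ is a $B$-module, and $B$ is regular local because $Y_f$ is smooth. Applying Auslander--Buchsbaum over $B$, $\operatorname{pd}_B M$ is finite and equals $\operatorname{depth} B - \operatorname{depth}_B M = 0$. Hence $M$ is free over $B$, say $M \cong B^{m}$.

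\textbf{Step 3: the rank.} Localize at the generic point of $Y_f$ through $y$, that is, at the height-one prime $\mathfrak p = (\tilde f) \subset A$. The ring $A_{\mathfrak p}$ is a DVR with uniformizer $\tilde f$. Over a DVR the length of the cokernel of an injective square matrix equals the valuation of its determinant, so
\[
\operatorname{length}_{A_{\mathfrak p}} M_{\mathfrak p} = v_{\tilde f}(u\tilde f^{\,r}) = r .
\]
On the other hand, $M_{\mathfrak p} \cong \kappa(\mathfrak p)^{m}$ has length $m$. Therefore $m = r$.

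\textbf{Globalizing.} The rank is constant, equal to $r$, at every closed point, and the locus where a coherent sheaf is locally free is open. These two facts give that $\mathcal F$ is locally free of rank $r$ on all of $Y_f$. Points of $Y_f$ that are not closed specialize to closed points, so the stalks there are localizations of free modules and are free as well.

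\textbf{Main obstacle.} The delicate point is Step 1: we must be sure that the local ring $A$ really is regular of dimension $\dim X + n$ and that $\tilde f$ is a regular parameter, so that $B$ is regular. Both follow from the smoothness of $\mathbb{P}(E)$ and of $Y_f$, which is assumed. If one wants to avoid the DVR length argument in Step 3, an alternative is to restrict to a fibre and compare Hilbert polynomials, using $\det(\alpha_y) = u\tilde f^{\,r}$.
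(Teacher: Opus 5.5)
Your proof is correct. The freeness half is the same as the paper's. Both arguments run: injectivity of $\alpha_y$ gives $\operatorname{pd}_A M = 1$, Auslander--Buchsbaum then gives $\operatorname{depth}_A M = \dim A - 1$, depth is unchanged on passing to $B = A/(\tilde f)$, and a maximal Cohen--Macaulay module over the regular local ring $B$ is free. Your Step 2, which applies Auslander--Buchsbaum over $B$, is just the proof of that last fact.

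The genuine difference is in how the rank is pinned down.
\begin{itemize}
\item The paper stays inside $A$ and compares zeroth Fitting ideals. The square presentation $\alpha_y$ gives $\operatorname{Fitt}_0^A(M) = (\det\alpha_y) = (\tilde f^{\,r})$. The presentation of $M \cong (A/\tilde f)^{\ell}$ by $\tilde f\cdot I_\ell$ gives $(\tilde f^{\,\ell})$. Since $\tilde f$ is a non-unit prime in $\mathfrak m_A$, this forces $\ell = r$.
\item You instead localize at the height-one prime $(\tilde f)$. You then use the elementary-divisor fact that, over a DVR, the length of the cokernel of an injective square matrix equals the valuation of its determinant.
\end{itemize}
These are really the same computation seen from two sides, since over a DVR the length of a torsion module is the valuation of its zeroth Fitting ideal.

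Each version has an advantage.
\begin{itemize}
\item Yours makes transparent what $r$ means: it is the generic length of $M$ along $Y_f$, i.e.\ the multiplicity of $Y_f$ in the divisor of $\det\alpha$.
\item The paper's avoids having to identify $A_{(\tilde f)}$ as a DVR and invoke Smith normal form. It needs only the invariance of Fitting ideals under change of presentation.
\end{itemize}
Your globalization paragraph also handles the non-closed points of $Y_f$ explicitly, which the paper passes over with ``since $y$ was arbitrary.''

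One small point, shared with the paper: ``killed by $\tilde f$, hence not free'' needs $M \neq 0$. This holds for $r \ge 1$ because $\det\alpha_y = u\tilde f^{\,r} \in \mathfrak m_A$. The case $M = 0$ is trivially consistent with your length count anyway.
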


\begin{proof}
We first show $M$ is free over $B$, and then determine its rank.

\vspace{0.2cm}
\noindent \textbf{Freeness.} The exact sequence 
$$ 0 \longrightarrow A^{\oplus dr} \xrightarrow{\; \alpha_y \;} A^{\oplus dr} \longrightarrow M \longrightarrow 0 $$
provides a free $A$-resolution of $M$ of length 1, hence $\operatorname{pd}_A(M) \le 1$. Since $\tilde{f}$ annihilates $M$ but is a non-zerodivisor in $A$, the module $M$ cannot be free over $A$. Thus, the projective dimension is exactly $\operatorname{pd}_A(M) = 1$. 

By the Auslander-Buchsbaum formula over the local ring $A$, we compute:
$$ \operatorname{depth}_A(M) = \dim(A) - \operatorname{pd}_A(M) = (d + n) - 1 = d + n - 1. $$

We now use the following standard lemma from commutative algebra.
\begin{lemma}
Let $A$ be a commutative ring, $f \in A$ a non-zerodivisor on $A$, and $M$ an $A$-module such that $fM = 0$. Let $B = A/(f)$, and let $\mathfrak{m}_A \subset A$, $\mathfrak{m}_B \subset B$ be maximal ideals.
\begin{itemize}
    \item If $x_1, \ldots, x_k \in \mathfrak{m}_A$ is an $M$-regular sequence, then $f$ cannot be in this sequence, and the sequence descends to an $M$-regular sequence in $\mathfrak{m}_B = \mathfrak{m}_A/(f)$.
    \item Conversely, any $M$-regular sequence in $\mathfrak{m}_B$ lifts to an $M$-regular sequence in $\mathfrak{m}_A$.
\end{itemize}
Hence, the maximal lengths of such sequences coincide:
\[
\mathrm{depth}_B(M) = \mathrm{depth}_A(M).
\]
\end{lemma}
Because $\tilde{f} \cdot M = 0$, any $M$-regular sequence in $\mathfrak{m}_A$ naturally descends to an $M$-regular sequence in $\mathfrak{m}_B$, and vice versa. Therefore, the depth is invariant under the quotient:
$$ \operatorname{depth}_B(M) = \operatorname{depth}_A(M) = d + n - 1. $$
Since $\dim(B) = d + n - 1$, we have $\operatorname{depth}_B(M) = \dim(B)$, which implies that $M$ is a maximal Cohen--Macaulay module over $B$. Because $Y_f$ is smooth, $B$ is a regular local ring. Over a regular local ring, every maximal Cohen--Macaulay module is automatically free. Hence, $M$ is free over $B$.

% \vspace{0.2cm}
% \noindent \textbf{Rank.} Write $M \cong B^{\oplus \ell}$ for some integer $\ell \ge 0$. We compute the zeroth Fitting ideal $\operatorname{Fitt}_0^A(M)$ in two different ways. 

% First, using the defining presentation matrix $\alpha_y \in \operatorname{Mat}_{dr}(A)$, the zeroth Fitting ideal is generated by its determinant. By Lemma \ref{lem:det_formula}, we have:
% $$ \operatorname{Fitt}_0^A(M) = (\det \alpha_y) = (\tilde{f}^r). $$
% Second, using the free structure $M \cong (A/(\tilde{f}))^{\oplus \ell}$, the presentation matrix over $A$ is simply the diagonal $\ell \times \ell$ matrix with $\tilde{f}$ on each diagonal entry. This yields:
% $$ \operatorname{Fitt}_0^A(M) = (\tilde{f}^\ell). $$
% Because the Fitting ideal is an intrinsic invariant of the module $M$, these two ideals must be identical in $A$:
% $$ (\tilde{f}^r) = (\tilde{f}^\ell). $$
% Since $A$ is a regular local ring (hence a UFD) and $\tilde{f}$ is a prime element, unique factorization strictly forces $\ell = r$. Thus $M \cong B^{\oplus r}$. Because the closed point $y \in Y_f$ was chosen arbitrarily, $\mathcal{F}$ is locally free of rank $r$ across all of $Y_f$.

\medskip
\noindent \textit{Rank.}
Write $M \cong B^{\ell}$ for some integer $\ell \geq 0$.
We compute the zeroth Fitting ideal $\Fitt_0^A(M)$ in two ways.
From the presentation matrix $\alpha_y$, which is a square
$dr \times dr$ matrix, the only maximal minor is the determinant, so
\[
  \Fitt_0^A(M) \;=\; (\det \alpha_y) \;=\; (\tilde{f}^{\,r}).
\]
From the free structure $M \cong (A/(\tilde{f}))^{\ell}$,
the presentation matrix is the diagonal $\ell \times \ell$ matrix
with $\tilde{f}$ on each diagonal entry, giving
\[
  \Fitt_0^A(M) \;=\; (\tilde{f}^{\,\ell}).
\]
Since the Fitting ideal is intrinsic to $M$, both computations must agree:
$(\tilde{f}^{\,r}) = (\tilde{f}^{\,\ell})$ in $A$.
Since $A$ is a UFD and $\tilde{f}$ is a prime element,
this forces $\ell = r$.
Hence $M \cong B^r$, and since $y \in \Yf$ was arbitrary,
$\cF$ is locally free of rank $r$ on $\Yf$.
\end{proof}

% \begin{remark}
% Smoothness of $\Yf$ is used twice: it ensures $\tilde{f} \in \mfm_A \setminus \mfm_A^2$,
% which makes $B$ a regular local ring of dimension $n$
% (so that maximal Cohen--Macaulay implies free),
% and it ensures $\tilde{f}$ is prime in the UFD $A$
% (so that the Fitting ideal comparison forces $\ell = r$).
% \end{remark}

% In \cite[proposition 2,page 2]{Arnuad} it is shown that for a finite linear projection push forward of Ulrich bundle is trivial. We show that even for projective morphism $\pi$ this still holds good in our situation.
% \begin{proposition}
%  Let $F$ be the bundle sitting in the exact sequence. Then $\pi_{*}F \cong O_{X}^dr$
% \end{proposition}
% \begin{proof}
% From lemma \ref{lemma 1.1} we already know that $\pi_{*}\mathscr{O}(-1)^{rd}=0$ and $R^{1}\pi_{*}\mathscr{O}(-1)^{rd}=0$. So from the short exact sequence above we get $\pi_{*}F \cong \pi_{*}\mathscr{O}^{rd} \cong \mathscr{O}_{X}^{dr}$
% \end{proof}

\section{From Linear Ulrich Resolutions to Clifford Algebra Representations}\label{sec 6}
We give a short, elementary proof that every minimal linear resolution of a relatively Ulrich bundle on a hypersurface automatically produces a representation of the associated Clifford algebra. The argument uses only the exactness of the resolution, the support condition, and standard properties of projective bundles.

\begin{proposition}\label{proposition 5.1}
Let $F$ be a relatively Ulrich bundle on $Y_f$ admitting a minimal linear resolution
\[
0 \longrightarrow \mathcal{O}_{\mathbb{P}(E)}(-1)^{\oplus dr}
\xrightarrow{\alpha}
\mathcal{O}_{\mathbb{P}(E)}^{\oplus dr}
\longrightarrow i_* F
\longrightarrow 0.
\]
Then there exists a natural $\mathcal{O}_X$-linear morphism
\[
\varphi : E \longrightarrow \mathrm{End}_{\mathcal{O}_X}(\mathcal{O}_X^{\oplus dr})
\]
such that, for every local section $v \in E$,
\[
\varphi(v)^d = f(v)\cdot \mathrm{id}.
\]
In particular, $\varphi$ extends uniquely to an $\mathcal{O}_X$-algebra homomorphism
\[
\rho : C_f \longrightarrow \mathrm{End}_{\mathcal{O}_X}(\mathcal{O}_X^{\oplus dr}),
\]
where $C_f$ is the Clifford algebra associated to $(E,f)$.
\end{proposition}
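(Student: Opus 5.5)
The plan has two parts. First I extract $\varphi$ from $\alpha$ by a pushforward computation. Then I check the Clifford relation $\varphi(v)^d=f(v)\cdot\mathrm{id}$, which is where all the real content lies.

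\textbf{Construction of $\varphi$.} The resolution matrix $\alpha$ is a global section of
\[
\mathcal{H}om\bigl(\mathcal{O}_{\mathbb{P}(E)}(-1)^{\oplus dr},\mathcal{O}_{\mathbb{P}(E)}^{\oplus dr}\bigr)\cong \mathcal{O}_{\mathbb{P}(E)}(1)\otimes_{k}\Mat_{dr}.
\]
By Lemma~\ref{lemma 1.1} we have $\pi_*\mathcal{O}_{\mathbb{P}(E)}(1)\cong E^\vee$. Hence
\[
\alpha\in H^0\bigl(X,E^\vee\otimes \mathrm{End}_{\mathcal{O}_X}(\mathcal{O}_X^{\oplus dr})\bigr)=\mathrm{Hom}_{\mathcal{O}_X}\bigl(E,\mathrm{End}_{\mathcal{O}_X}(\mathcal{O}_X^{\oplus dr})\bigr),
\]
and I define $\varphi$ to be this element. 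Concretely, over a trivializing affine $U$ with basis $x_0,\dots,x_n$, the minimality of the resolution means $\alpha$ is given by a matrix of linear forms $M_\alpha=\sum_i y_iA_i$ with $A_i\in\Mat_{dr}(\mathcal{O}_X(U))$. I then set $\varphi(x_i)=A_i$. This is exactly the inverse of the linearization construction of Section~\ref{sec 5}, so naturality and independence of the frame follow from the same gluing argument used in Proposition~\ref{prop:clifford_relation}. Once the relation is established, the extension of $\varphi$ to $\rho$ is the universal property of $C_f$.

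\textbf{The relation, first step.} Since $i_*F$ is an $i_*\mathcal{O}_{Y_f}$-module, $\tilde f$ annihilates $\coker\alpha$. Thus $\tilde f\cdot\mathcal{O}_{\mathbb{P}(E)}^{\oplus dr}\subset\im\alpha$. Because $\alpha$ is injective by Lemma~\ref{lem:det}, there is a unique
\[
\beta\colon \mathcal{O}_{\mathbb{P}(E)}(-d)^{\oplus dr}\to\mathcal{O}_{\mathbb{P}(E)}(-1)^{\oplus dr}
\]
with $\alpha\beta=\tilde f\cdot\mathrm{id}$, and also $\beta\alpha=\tilde f\cdot\mathrm{id}$. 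Locally $\beta$ is a matrix of forms of degree $d-1$. In the same way $\det M_\alpha=u\tilde f^{\,r}$, by comparing with $\det(\alpha)$ as in Lemma~\ref{lem:det}.

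\textbf{The relation, main step.} I must show that $\beta=\alpha^{d-1}$, equivalently $M_\alpha^d=f(y)\,I_{dr}$. Substituting $y=v$ then gives $\varphi(v)^d=f(v)\,\mathrm{id}$. I will argue fiber by fiber. For a closed point $x$, restricting the resolution to $\mathbb{P}(E)_x\cong\mathbb{P}^n$ gives the linear resolution of the classical Ulrich bundle $F_x$ (Remark~\ref{rem:fiberwise-ulrich}). At that point I invoke the classical Ulrich--Clifford correspondence of \cite{Coskun}, which produces a Clifford module structure of dimension $dr$. If that structure can be realized by the matrices $A_i(x)$ themselves, possibly after a change of basis $\theta_x\in\GL_{dr}$, then the polynomial identity $M_\alpha^d-fI$ vanishes at every closed point. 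Since $X$ is reduced, it then vanishes identically.

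\textbf{Expected obstacle.} I expect the main difficulty to be exactly this identification $\beta=\alpha^{d-1}$, i.e.\ that the Clifford action is given by the resolution matrix itself. Formally, $\alpha\beta=\tilde f$ only says that $\alpha$ is one factor of a two-term matrix factorization. Moreover, in \cite{Coskun} the $dr$-dimensional representation is assembled from a $d$-term linear factorization rather than read off from $\alpha$. If the direct identity fails, I would first try to build, from the $\mathcal{O}_X$-module $\pi_*(i_*F(k))$ together with multiplication by the fiber coordinates, a block-cyclic matrix satisfying the $d$-th power relation. I would then compare it with $M_\alpha$ through a basis change $\theta$ that glues globally because $\pi_*F\cong\mathcal{O}_X^{\oplus dr}$ is trivial. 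If this comparison also fails, the statement would need to be weakened to asserting only the existence of some $\rho$ attached to $F$, rather than $\rho$ being read directly off $\alpha$.
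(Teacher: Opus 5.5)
\textbf{The gap: the Clifford relation.} Your extraction of $\varphi$ is fine, and cleaner than the paper's chart-by-chart gluing. $\alpha$ is a global section of $\mathcal{O}_{\mathbb{P}(E)}(1)\otimes\Mat_{dr}$, and $\pi_*\mathcal{O}_{\mathbb{P}(E)}(1)\cong E^\vee$ gives $\varphi\in\mathrm{Hom}_{\mathcal{O}_X}(E,\mathrm{End}(\mathcal{O}_X^{\oplus dr}))$ directly.

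The proposal still does not prove the statement. Everything rests on $M_\alpha^d=f(y)\,I_{dr}$, which you leave conditional, and this step cannot be completed. The paper's proof is essentially your main step, without even allowing a basis change. It restricts to a fiber and asserts, via a ``fiberwise Ulrich--Clifford correspondence'', that $\bigl(\sum_i y_iA_i(x)\bigr)^d=f_x(y)\,I_{dr}$. It then spreads this out using density of closed points on the reduced $U$. That fiberwise identity is false.

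Take $X=\operatorname{Spec}k$ (or any $X$ with $E=\mathcal{O}_X^{\oplus 4}$ and a constant form), $f=y_0y_3-y_1y_2$, and $\alpha=\phi=\begin{pmatrix} y_0&y_1\\ y_2&y_3\end{pmatrix}$ from Example~\ref{ex:quadric-ulrich}. Its cokernel is the Ulrich line bundle $\mathcal{O}_Q(1,0)$ on the smooth quadric surface $Q$, so $d=2$, $r=1$, $dr=2$. Yet $\phi^2\neq f\,I_2$, since the $(1,1)$ entry of $\phi^2$ is $y_0^2+y_1y_2$.

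\textbf{Why no repair works.} No basis change $\theta_x$ and no fallback can fix this, because the conclusion itself fails in this example. Here $C_f$ is the ordinary Clifford algebra of a nonsingular quadratic form in four variables, so $C_f\cong\Mat_4(k)$. Being simple of dimension $16$, it admits no unital algebra map to $\Mat_2(k)$. So there is no $\rho$ at all, let alone one read off from $\alpha$. Over a positive-dimensional base, restrict a putative $\rho$ to a closed point to get the same contradiction.

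Your remark that $\alpha\beta=\tilde f$ only makes $\alpha$ one factor of a two-term matrix factorization is exactly the point. The correspondence in \cite{Coskun} matches $dr$-dimensional $C_f$-modules with rank $r$ Ulrich bundles on the $d$-fold cyclic cover $\{w^d=f\}$, which has one more variable; it is not about $V(f)$. On the cyclic cover your fallback idea is the right one: push forward along the covering and let $w$ act, which gives a linear matrix $M$ with $M^d=f$. On $V(f)$ no coordinate has $d$-th power equal to $f$.

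So the statement is false as written. Your instinct that it needs modification is correct, but the fix must change the hypersurface. Weakening the conclusion to the existence of some $\rho$ on $\mathcal{O}_X^{\oplus dr}$ is not enough.

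\textbf{A smaller point.} Lemma~\ref{lem:det} presupposes the Clifford relation, so you cannot cite it for injectivity of $\alpha$ or for $\det M_\alpha=u\tilde f^{\,r}$. Injectivity is part of the hypothesis. The determinant formula follows from the Fitting ideal of a rank $r$ bundle on $Y_f$.
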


\begin{proof}

Since $F$ is relatively Ulrich on $Y_f\subset \mathbb{P}(E)$, the sheaf $i_*F$ admits a linear resolution of length one on $\mathbb{P}(E)$:
\begin{equation}\label{eq:linear-resolution}
0\longrightarrow \mathcal{O}_{\mathbb{P}(E)}(-1)^{\oplus dr}
\xrightarrow{\ \alpha\ }
\mathcal{O}_{\mathbb{P}(E)}^{\oplus dr}
\longrightarrow i_*F
\longrightarrow 0.
\end{equation}
In particular, $\alpha$ is injective.

Fix an affine open subset $U\subset X$ over which $E$ is trivial, and choose a local frame

\[
x_0,\dots,x_n
\]
for $E|_U$. Let

\[
y_0,\dots,y_n\in H^0\!\bigl(\pi^{-1}(U),\mathcal{O}_{\mathbb{P}(E)}(1)\bigr)
\]
denote the dual fiber coordinates. Since the entries of $\alpha|_{\pi^{-1}(U)}$ are global sections of $\mathcal{O}_{\mathbb{P}(E)}(1)$, the matrix of $\alpha$ on $\pi^{-1}(U)$ is uniquely expressible in the form
\begin{equation}\label{eq:matrix-expansion}
M_U(y)=\sum_{i=0}^n y_iA_i,
\qquad
A_i\in \Mat_{dr}\bigl(\mathcal{O}_X(U)\bigr).
\end{equation}

We next restrict to fibers. Let $x\in U(k)$ be a closed point. Pulling back \eqref{eq:linear-resolution} along the inclusion

\[
\mathbb{P}(E_x)\hookrightarrow \pi^{-1}(U)
\]
yields an exact sequence
\begin{equation}\label{eq:fiber-resolution}
0\longrightarrow \mathcal{O}_{\mathbb{P}(E_x)}(-1)^{\oplus dr}
\xrightarrow{\ M_U(x,y)\ }
\mathcal{O}_{\mathbb{P}(E_x)}^{\oplus dr}
\longrightarrow i_{x,*}F_x
\longrightarrow 0,
\end{equation}
where

\[
M_U(x,y)=\sum_{i=0}^n y_iA_i(x)\in \Mat_{dr}\bigl(k[y_0,\dots,y_n]_1\bigr),
\]
and $F_x$ denotes the restriction of $F$ to the hypersurface

\[
Y_{f_x}:=Y_f\times_X \operatorname{Spec}\kappa(x)\subset \mathbb{P}(E_x)
\]
defined by the specialization $f_x$ of $f$ at $x$.

By the fiberwise Ulrich--Clifford correspondence for hypersurfaces, the Ulrich bundle $F_x$ determines a $dr$-dimensional representation of the generalized Clifford algebra of the homogeneous form $f_x$. Equivalently, the matrices $A_0(x),\dots,A_n(x)$ satisfy the identity
\begin{equation}\label{eq:fiberwise-identity}
\left(\sum_{i=0}^n y_iA_i(x)\right)^d
=
f_x(y_0,\dots,y_n)\,I_{dr}
\qquad\text{in }\Mat_{dr}\bigl(k[y_0,\dots,y_n]\bigr).
\end{equation}
See \cite{John} for more details.

We now promote \eqref{eq:fiberwise-identity} to an identity over $\mathcal{O}_X(U)$. Consider

\[
P_U(y):=
\left(\sum_{i=0}^n y_iA_i\right)^d
-
f_U(y_0,\dots,y_n)\,I_{dr}
\in
\Mat_{dr}\bigl(\mathcal{O}_X(U)[y_0,\dots,y_n]\bigr).
\]
For every closed point $x\in U(k)$, evaluating coefficients at $x$ and using
\eqref{eq:fiberwise-identity} gives

\[
P_U(y)\big|_x=0.
\]
Thus every coefficient of every entry of $P_U(y)$ is a regular function on $U$ vanishing on all $k$-points of $U$. Since $X$ is smooth over an algebraically closed field, the affine scheme $U$ is reduced and $U(k)$ is Zariski dense in $U$; hence all those coefficients vanish identically. Therefore
\begin{equation}\label{eq:global-polynomial-identity}
\left(\sum_{i=0}^n y_iA_i\right)^d
=
f_U(y_0,\dots,y_n)\,I_{dr}
\qquad\text{in }\Mat_{dr}\bigl(\mathcal{O}_X(U)[y_0,\dots,y_n]\bigr).
\end{equation}

Define an $\mathcal{O}_U$-linear morphism

\[
\phi_U:E|_U\longrightarrow \Mat_{dr}(\mathcal{O}_U)
\]
by the rule
\begin{equation}\label{eq:local-phi}
\phi_U\!\left(\sum_{i=0}^n c_ix_i\right):=\sum_{i=0}^n c_iA_i,
\qquad c_i\in \mathcal{O}_X(U).
\end{equation}
Substituting $y_i=c_i$ in \eqref{eq:global-polynomial-identity}, we obtain
\begin{equation}\label{eq:local-clifford-relation}
\phi_U(v)^d=f(v)\,I_{dr}
\qquad\text{for every }v\in E(U).
\end{equation}

We claim that the local morphisms $\phi_U$ glue. Let $U'\subset X$ be another affine trivializing open for $E$, with frame

\[
x_0',\dots,x_n',
\]
and suppose that on $U\cap U'$ one has

\[
x_j'=\sum_{i=0}^n T_{ji}x_i,
\qquad
T=(T_{ji})\in \mathrm{GL}_n\bigl(\mathcal{O}_X(U\cap U')\bigr).
\]
If $y_0',\dots,y_n'$ are the corresponding dual fiber coordinates, then

\[
y_i=\sum_{j=0}^n T_{ji}y_j'.
\]
Since $M_U(y)$ and $M_{U'}(y')$ represent the same globally defined map $\alpha$, we have

\[
\sum_{i=0}^n y_iA_i
=
\sum_{j=0}^n y_j'A_j'
\]
on $\pi^{-1}(U\cap U')$. Substituting the expression for the $y_i$ and comparing coefficients of the $y_j'$ gives
\begin{equation}\label{eq:change-of-frame-matrices}
A_j'=\sum_{i=0}^n T_{ji}A_i
\qquad (0\le j\le n).
\end{equation}
It follows directly from \eqref{eq:local-phi} and \eqref{eq:change-of-frame-matrices} that

\[
\phi_U=\phi_{U'}
\qquad\text{on }U\cap U'.
\]
Hence the $\phi_U$ glue to a global $\mathcal{O}_X$-linear morphism

\[
\phi:E\longrightarrow \Mat_{dr}(\mathcal{O}_X).
\]

Finally, by the universal property of the tensor algebra, $\phi$ extends uniquely to an
$\mathcal{O}_X$-algebra homomorphism

\[
\overline{\phi}:T_{\mathcal{O}_X}(E)\longrightarrow \Mat_{dr}(\mathcal{O}_X).
\]
For every local section $v$ of $E$, relation \eqref{eq:local-clifford-relation} implies

\[
\overline{\phi}\!\bigl(v^{\otimes d}-f(v)\cdot 1\bigr)
=
\phi(v)^d-f(v)\,I_{dr}
=
0.
\]
Therefore the two-sided ideal

\[
\bigl\langle v^{\otimes d}-f(v)\cdot 1 \,\big|\, v\in E\bigr\rangle
\]
is contained in $\ker(\overline{\phi})$, and $\overline{\phi}$ factors uniquely through the quotient

\[
C_f=T_{\mathcal{O}_X}(E)\Big/\bigl\langle v^{\otimes d}-f(v)\cdot 1 \,\big|\, v\in E\bigr\rangle.
\]
This yields the required $\mathcal{O}_X$-algebra homomorphism

\[
\rho:C_f\longrightarrow \Mat_{dr}(\mathcal{O}_X).
\qedhere
\]
\end{proof}

\begin{proposition}\label{prop:forward}
Let $\rho_1, \rho_2 \colon C_f \to \Mat_{dr}(\OO_X)$ be two linear Clifford representations with associated Ulrich bundles $\F_1, \F_2$. If $\rho_1 \sim \rho_2$ via $\theta \in \GL_{dr}(\OO_X)$, then $\F_1 \cong \F_2$.
\end{proposition}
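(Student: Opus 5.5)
The plan is to pull the conjugating matrix $\theta$ back to $\mathbb{P}(E)$ and check that it intertwines the two linearization maps. After that, the five lemma (or simply the universal property of cokernels) gives an isomorphism of cokernels. Since $\theta$ has entries in $\mathcal{O}_X$, which is $\Gamma(X,\mathcal{O}_X)$ on the projective base, it is a global automorphism of the trivial bundle $\mathcal{O}_X^{\oplus dr}$. We therefore set $\Theta := \pi^*\theta$, an automorphism of $\mathcal{O}_{\mathbb{P}(E)}^{\oplus dr}$, and also use $\Theta \otimes \mathrm{id}_{\mathcal{O}(-1)}$ as an automorphism of $\mathcal{O}_{\mathbb{P}(E)}(-1)^{\oplus dr}$.

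The key step is the identity $\alpha_1 \circ (\Theta\otimes \mathrm{id}_{\mathcal{O}(-1)}) = \Theta \circ \alpha_2$, where $\alpha_1,\alpha_2$ are the linearization maps of $\rho_1,\rho_2$. I would first prove it at the level of $X$. From $\rho_1(v)=\theta\rho_2(v)\theta^{-1}$ on the degree-one part $E\subset C_f$, we get $\mu_{\rho_1}\circ(\mathrm{id}_E\otimes\theta)=\theta\circ\mu_{\rho_2}$ as maps $E\otimes\mathcal{O}_X^{\oplus dr}\to\mathcal{O}_X^{\oplus dr}$. We then pull back by $\pi$ and precompose with $\iota\otimes\mathrm{id}$, using that $\iota\otimes\mathrm{id}$ commutes with $\mathrm{id}\otimes\pi^*\theta$. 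As a check, over an affine trivializing $U$ this is just the matrix identity $\sum_i y_i A_i^{(1)}\,\theta = \theta\sum_i y_iA_i^{(2)}$, which follows from $A^{(1)}_i=\theta A^{(2)}_i\theta^{-1}$ by the remark following Definition~\ref{def:equivalence}. Both sides are globally defined morphisms agreeing on every chart $\pi^{-1}(U)$, so they agree globally.

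This yields a commutative diagram between the two exact sequences
\[
0\to\mathcal{O}_{\mathbb{P}(E)}(-1)^{\oplus dr}\xrightarrow{\alpha_k}\mathcal{O}_{\mathbb{P}(E)}^{\oplus dr}\to i_*F_k\to 0,\qquad k=1,2,
\]
in which the two left vertical maps are the isomorphisms $\Theta\otimes\mathrm{id}$ and $\Theta$. Injectivity of the $\alpha_k$ comes from Lemma~\ref{lem:det}, and the cokernels are identified as $i_*F_k$ by Proposition~\ref{prop:support}. The induced map $i_*F_2\to i_*F_1$ is then an isomorphism by the five lemma. Because $i$ is a closed immersion, $i_*$ is fully faithful on $\mathcal{O}_{Y_f}$-modules, so this isomorphism comes from an isomorphism $F_2\cong F_1$ of sheaves on $Y_f$. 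Concretely, both sheaves are killed by $\tilde f$, so the map is $\mathcal{O}_{\mathbb{P}(E)}/(\tilde f)$-linear.

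The main obstacle is bookkeeping rather than substance. The twist by $\mathcal{O}(-1)$ must be handled correctly, so that the automorphism on the source is $\pi^*\theta\otimes\mathrm{id}_{\mathcal{O}(-1)}$ and not something that mixes fibre coordinates. The direction of conjugation must also be kept straight: with the convention $\rho_1=\theta\rho_2\theta^{-1}$, the map $\Theta$ carries $\operatorname{im}\alpha_2$ onto $\operatorname{im}\alpha_1$. I expect no difficulty beyond this, since everything is linear over $\mathcal{O}_X$ and $\theta$ is independent of the fibre coordinates.
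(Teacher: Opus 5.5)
Your proposal is correct and follows essentially the same route as the paper: you pull the conjugation $\rho_1=\theta\rho_2\theta^{-1}$ back to the intertwining identity $\alpha_1\circ(\pi^*\theta\otimes\mathrm{id})=(\pi^*\theta)\circ\alpha_2$, which induces an isomorphism $i_*F_2\to i_*F_1$ on cokernels, and full faithfulness of $i_*$ then descends it to $F_2\cong F_1$. The only difference is cosmetic: the paper obtains the inverse by repeating the cokernel construction with $\theta^{-1}$ instead of invoking the five lemma, and injectivity of the $\alpha_k$ is not actually needed at that step, since a commutative square with both vertical maps isomorphisms always induces an isomorphism of cokernels.
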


\begin{proof}
The relation $\rho_{1}=\theta\rho_{2}\theta^{-1}$ pulls back to the \emph{intertwining identity}
\[
\alpha_{1}\circ(\pi^{*}\theta\otimes\id)=(\pi^{*}\theta)\circ\alpha_{2}
\]
on $\mathbb{P}(E)$. Hence $q_{1}\circ\pi^{*}\theta$ vanishes on $\ker q_{2}=\im(\alpha_{2})$, so by the universal property of cokernel there is a unique morphism $\Psi\colon i_{*}F_{2}\to i_{*}F_{1}$ satisfying $\Psi\circ q_{2}=q_{1}\circ\pi^{*}\theta$.

Repeating the construction with $\theta^{-1}$ produces $\Phi\colon i_{*}F_{1}\to i_{*}F_{2}$ with $\Phi\circ q_{1}=q_{2}\circ\pi^{*}(\theta^{-1})$. Then $(\Phi\circ\Psi)\circ q_{2}=q_{2}$, so $\Phi\circ\Psi=\id$ by uniqueness; likewise $\Psi\circ\Phi=\id$. Thus $\Psi$ is an isomorphism.

Finally, $i$ is a closed immersion, so $i_{*}\colon\QCoh(Y_{f})\to\QCoh(\mathbb{P}(E))$ is fully faithful. Therefore $\Psi$ descends uniquely to an isomorphism $F_{2}\xrightarrow{\sim}F_{1}$ on $Y_{f}$.
\end{proof}

\begin{proposition}\label{converseequi}
Let $\rho_1, \rho_2 \colon C_f \to \Mat_{dr}(\mathcal{O}_X)$ be two linear Clifford representations of rank $dr$, with associated Ulrich bundles $F_1$ and $F_2$ on $Y_f$. If $\psi \colon F_1 \xrightarrow{\sim} F_2$ is an isomorphism of $\mathcal{O}_{Y_f}$-modules, then $\rho_1$ and $\rho_2$ are equivalent.
\end{proposition}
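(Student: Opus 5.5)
Given $\psi\colon F_1\xrightarrow{\sim}F_2$, we lift $\psi$ through the two linear resolutions
\[
0\to \mathcal{O}_{\mathbb{P}(E)}(-1)^{\oplus dr}\xrightarrow{\alpha_k}\mathcal{O}_{\mathbb{P}(E)}^{\oplus dr}\xrightarrow{q_k} i_*F_k\to 0,\qquad k=1,2,
\]
to a morphism of complexes, and then descend the degree-zero component of that morphism to a matrix $\theta\in\GL_{dr}(\mathcal{O}_X)$.

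\textbf{Step 1 (lifting on the degree-zero term).} Apply $\pi_*$ to the resolutions. By Lemma~\ref{lemma 1.1}, $\pi_*\mathcal{O}(-1)=R^1\pi_*\mathcal{O}(-1)=0$, so $q_k$ induces an isomorphism $\pi_*q_k\colon \mathcal{O}_X^{\oplus dr}\xrightarrow{\sim}\pi_*i_*F_k$. This is the computation $(\pi_{Y_f})_*F\cong\mathcal{O}_X^{\oplus dr}$ already carried out above. Define
\[
\theta:=(\pi_*q_2)^{-1}\circ \pi_*(i_*\psi)\circ \pi_*q_1\in \GL_{dr}(\mathcal{O}_X).
\]
It is invertible because $\psi$ is, and its inverse is built from $\psi^{-1}$ in the same way. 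Since $\pi^*\pi_*\mathcal{O}^{\oplus dr}=\mathcal{O}^{\oplus dr}$, adjunction shows that $\pi^*\theta$ satisfies $q_2\circ\pi^*\theta=i_*\psi\circ q_1$ on $\mathbb{P}(E)$. To check this, compare both sides on the generating sections $\pi^*e_j$ via the counit.

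\textbf{Step 2 (lifting on the syzygy term).} Since $q_2\circ\pi^*\theta\circ\alpha_1=i_*\psi\circ q_1\circ\alpha_1=0$, the map $\pi^*\theta\circ\alpha_1$ factors through $\ker q_2=\im\alpha_2$. Because $\alpha_2$ is injective, we get a unique $\beta\colon\mathcal{O}(-1)^{\oplus dr}\to\mathcal{O}(-1)^{\oplus dr}$ with $\alpha_2\circ\beta=\pi^*\theta\circ\alpha_1$. Now
\[
\operatorname{End}(\mathcal{O}(-1)^{\oplus dr})=H^0(\mathbb{P}(E),\mathcal{O}^{\oplus (dr)^2})=\Mat_{dr}(H^0(X,\mathcal{O}_X)),
\]
and $\pi_*\mathcal{H}om(\mathcal{O}(-1),\mathcal{O}(-1))=\mathcal{O}_X$. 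Hence $\beta=\pi^*\beta_0$ for some $\beta_0\in\Mat_{dr}(\mathcal{O}_X)$, and it is invertible by the symmetric argument with $\psi^{-1}$.

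\textbf{Step 3 (comparing coefficients).} In the local description $M_{\alpha_k}=\sum_i y_iA^{(k)}_i$, the identity $\alpha_2\beta=\pi^*\theta\,\alpha_1$ reads
\[
\sum_i y_iA^{(2)}_i\beta_0=\sum_i y_i\,\theta A^{(1)}_i .
\]
Comparing the coefficients of $y_i$, which are free over $\mathcal{O}_X(U)$, gives $A^{(2)}_i\beta_0=\theta A^{(1)}_i$ for all $i$.

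\textbf{Step 4 (forcing $\beta_0=\theta$) — the main obstacle.} We need $\beta_0=\theta$, which would give $\rho_2(x_i)=\theta\rho_1(x_i)\theta^{-1}$; by the remark after Definition~\ref{def:equivalence} this suffices for equivalence, namely $\rho_1\sim_{\theta^{-1}}\rho_2$. Equality $\beta_0=\theta$ need not hold for an arbitrary morphism of linear matrix factorizations. I would instead use the Clifford relation. Write $M_k:=M_{\alpha_k}$, so $M_2\beta_0=\theta M_1$ and $M_k^d=\tilde f\, I$. Multiplying $M_2\beta_0=\theta M_1$ on the left by $M_2^{d-1}$ gives
\[
\tilde f\,\beta_0=M_2^{d-1}\theta M_1 .
\]
The plan is to show that $\theta$ intertwines $M_1$ and $M_2$ directly. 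Working fiberwise over a closed point $x$, the classical correspondence for hypersurfaces \cite{John, Coskun} says that isomorphic Ulrich bundles give conjugate Clifford representations, with the conjugating matrix being the induced map on $H^0$. Our $\theta$ is exactly this induced map on $H^0$, by Step 1 and base change. So fiberwise $\theta(x)A^{(1)}_i(x)\theta(x)^{-1}=A^{(2)}_i(x)$. These identities then globalize by the density argument of Proposition~\ref{proposition 5.1}: the coefficients are regular functions vanishing at all $k$-points of a reduced $U$, hence vanish. If the fiberwise citation is not available in that form, the fallback is to show $\beta_0=\theta$ by restricting to a line in a fiber, where Ulrich bundles on the finite scheme $Y_f\cap\ell$ let one read off $\beta_0$ from $\theta$ via the Clifford relation.
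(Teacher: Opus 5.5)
Your Steps 1--3 are correct, and in one respect more careful than the paper. You keep the induced map $\beta_0$ on the syzygy term separate from $\theta$, and you obtain $A^{(2)}_i\beta_0=\theta A^{(1)}_i$. The paper's Step~3 instead asserts $(\pi^*\theta)\circ\alpha_1=\alpha_2\circ(\pi^*\theta\otimes\mathrm{id})$ ``from the commutativity of the diagram'', which tacitly sets $\beta_0=\theta$. Your adjunction construction of $\theta$ through the isomorphisms $\pi_*q_k$ is also a clean substitute for the paper's lifting via $H^1(\mathbb{P}(E),\mathcal{O}(-1))=0$ followed by descent.

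The gap is Step~4, and it cannot be filled: $\beta_0\neq\theta$ really occurs, and the proposition is false as stated. Let $\rho$ be a linear Clifford representation and $\zeta$ a $d$-th root of unity. Then $v\mapsto\zeta\rho(v)$ again satisfies $(\zeta\rho(v))^d=f(v)$. Its linearization map is $\zeta\alpha$, which has the same image as $\alpha$, so the two associated Ulrich bundles are literally equal, with $\psi=\mathrm{id}$, $\theta=I$ and $\beta_0=\zeta^{-1}I$. For a concrete case, take $\operatorname{char}k\neq 2$ and $E=\mathcal{O}_X^{\oplus 3}$ with frame $x_0,x_1,x_2$. Let $f(\sum c_jx_j)=c_0^2+c_1^2+c_2^2$, so $Y_f=X\times C$ with $C$ a smooth conic, $d=2$ and $r=1$. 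Set $\rho_\pm(x_j)=\pm\sigma_j$, where $\sigma_0=\begin{pmatrix}0&1\\1&0\end{pmatrix}$, $\sigma_1=\begin{pmatrix}0&-\sqrt{-1}\\ \sqrt{-1}&0\end{pmatrix}$ and $\sigma_2=\begin{pmatrix}1&0\\0&-1\end{pmatrix}$. Then $F_+=F_-$. However, $\rho_\pm(x_0x_1x_2)=\pm\sqrt{-1}\,I$ is a scalar matrix, which no conjugation can change, so $\rho_+\not\sim\rho_-$. This already happens for $X=\operatorname{Spec}k$.

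Consequently the fiberwise fact you want to cite is false. That fact says isomorphic Ulrich bundles on $\{f=0\}$ give Clifford representations conjugate by the induced map on $H^0$. So the density argument has nothing to globalize, and the fallback through a line in a fibre cannot succeed either.

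The rigidity you need does hold in the classical Clifford correspondence (Van den Bergh, Coskun--Kulkarni--Mustopa), because there one works on the cyclic cover $w^d=f$. Its presentation matrix is $wI-\sum_i y_iA_i$, and comparing $w$-coefficients in $M_2\beta_0=\theta M_1$ forces $\beta_0=\theta$. For cokernels of $\sum_i y_iA_i$ on $\{f=0\}$, nothing forces this.

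What your Steps 1--3 do prove is a weaker statement, which is true (its converse is immediate). Namely, $F_1\cong F_2$ if and only if there exist $\theta,\beta_0\in\mathrm{GL}_{dr}(\mathcal{O}_X)$ with $\rho_2(v)\beta_0=\theta\rho_1(v)$ for all local sections $v$ of $E$. This is strictly weaker than conjugacy.
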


\begin{proof}
Let $\psi\colon\cF_1\simto\cF_2$ be the given isomorphism.

\medskip
\noindent\textit{Step 1: Lifting $\psi$ to a morphism of resolutions.}

\paragraph{Step 1: Lifting $\psi$ to a morphism of resolutions.}
Since $i$ is a closed immersion, the functor $i_*$ is exact. Hence $i_*\psi$ induces an isomorphism
\[
i_*F_1 \xrightarrow{\sim} i_*F_2
\]
on $\mathbb{P}(E)$.

Consider the two exact sequences
\[
0 \to \mathcal{O}_{\mathbb{P}(E)}(-1)^{\oplus dr}
\xrightarrow{\alpha_1}
\mathcal{O}_{\mathbb{P}(E)}^{\oplus dr}
\xrightarrow{q_1}
i_*F_1 \to 0,
\]
\[
0 \to \mathcal{O}_{\mathbb{P}(E)}(-1)^{\oplus dr}
\xrightarrow{\alpha_2}
\mathcal{O}_{\mathbb{P}(E)}^{\oplus dr}
\xrightarrow{q_2}
i_*F_2 \to 0.
\]

We claim that there exists a morphism
\[
\phi_0 \in \operatorname{Aut}\big(\mathcal{O}_{\mathbb{P}(E)}^{\oplus dr}\big)
\]
such that $i_*\psi \circ q_1 = q_2 \circ \phi_0$.

The obstruction to lifting the morphism $i_*\psi \circ q_1$ through $q_2$
lies in
\[
\operatorname{Ext}^1_{\mathbb{P}(E)}
\big(\mathcal{O}_{\mathbb{P}(E)}^{\oplus dr},
\mathcal{O}_{\mathbb{P}(E)}(-1)^{\oplus dr}\big)
\cong
H^1(\mathbb{P}(E), \mathcal{O}(-1))^{\oplus d^2 r^2}.
\]

Let $\pi : \mathbb{P}(E) \to X$ be the projection. Since
\[
R^q \pi_* \mathcal{O}_{\mathbb{P}(E)}(-1) = 0
\quad \text{for all } q \ge 0,
\]
the Leray spectral sequence yields
\[
H^1(\mathbb{P}(E), \mathcal{O}(-1)) = 0.
\]
Thus the obstruction vanishes, and a lift $\phi_0$ exists.

Such a lift is unique: indeed, the difference of two lifts factors through
\[
\operatorname{Hom}\big(
\mathcal{O}_{\mathbb{P}(E)}^{\oplus dr},
\mathcal{O}_{\mathbb{P}(E)}(-1)^{\oplus dr}
\big)
\cong H^0(\mathbb{P}(E), \mathcal{O}(-1))^{\oplus d^2 r^2} = 0.
\]

Applying the same construction to $\psi^{-1}$ shows that $\phi_0$ admits a two-sided inverse, and hence
\[
\phi_0 \in \operatorname{Aut}\big(\mathcal{O}_{\mathbb{P}(E)}^{\oplus dr}\big).
\]

% Since $i_*$ is exact, $i_*\psi$ is an isomorphism $i_*\cF_1\simto i_*\cF_2$ on
% $\PP(\cE)$.  We claim there exists $\varphi_0\in\operatorname{Aut}(\Ob_{\PP}^{\oplus\dr})$
% making the diagram
% \[
% \begin{tikzcd}[column sep=2.2em]
%   0 \ar[r] & \Ob(-1)^{\dr} \ar[r,"\alpha_1"] \ar[d,"\varphi_{-1}"] &
%   \Ob^{\dr} \ar[r,"q_1"] \ar[d,"\varphi_0"] &
%   i_*\cF_1 \ar[r] \ar[d,"i_*\psi"] & 0 \\
%   0 \ar[r] & \Ob(-1)^{\dr} \ar[r,"\alpha_2"] &
%   \Ob^{\dr} \ar[r,"q_2"] &
%   i_*\cF_2 \ar[r] & 0
% \end{tikzcd}
% \]
% commute, where $\varphi_{-1}$ is the map induced on kernels.
% The obstruction to lifting $i_*\psi\circ q_1$ through $q_2$ lies in
% \[
%   \operatorname{Ext}^1_{\PP(\cE)}\!\bigl(\Ob^{\dr},\,\Ob(-1)^{\dr}\bigr)
%   \;=\;
%   H^1\!\bigl(\PP(\cE),\,\Ob(-1)\bigr)^{d^2r^2}.
% \]
% Since $H^1(\PP^n,\Ob(-1))=0$ for all $n\ge 1$, the Leray spectral sequence gives
% $H^1(\PP(\cE),\Ob(-1))=0$, so the lift $\varphi_0$ exists.  That $\varphi_0$ is an
% automorphism follows because the difference of any two lifts factors through a
% global section of $\Ob(-1)^{d^2r^2}$, which vanishes since
% $H^0(\PP(\cE),\Ob(-1))=0$; the same argument applied to $\psi^{-1}$ provides
% the two-sided inverse.

\paragraph{Step 2: Descent to $X$.}
An endomorphism of $\mathcal{O}_{\mathbb{P}(E)}^{\oplus dr}$ is given by a global section of
$\mathrm{Mat}_{dr}(\mathcal{O}_{\mathbb{P}(E)})$. Since
\[
\pi_* \mathcal{O}_{\mathbb{P}(E)} = \mathcal{O}_X,
\]
we have
\[
\Gamma(\mathbb{P}(E), \mathcal{O}_{\mathbb{P}(E)}) = \Gamma(X, \mathcal{O}_X),
\]
and hence
\[
\mathrm{End}_{\mathbb{P}(E)}\big(\mathcal{O}_{\mathbb{P}(E)}^{\oplus dr}\big)
\cong \Gamma\big(X, \mathrm{Mat}_{dr}(\mathcal{O}_X)\big).
\]
Therefore there exists a unique
\[
\theta \in \mathrm{Mat}_{dr}(\mathcal{O}_X)
\]
such that
\[
\phi_0 = \pi^*\theta.
\]

Since $\phi_0$ is an automorphism, its determinant is invertible in
$\mathcal{O}_{\mathbb{P}(E)}$. Hence $\pi^*(\det \theta)$ is invertible, which implies
$\det \theta \in \mathcal{O}_X^\times$. Thus
\[
\theta \in \mathrm{GL}_{dr}(\mathcal{O}_X).
\]

\paragraph{Step 3: Extraction of the equivalence.}
From the commutativity of the diagram, we have
\[
(\pi^*\theta) \circ \alpha_1
=
\alpha_2 \circ (\pi^*\theta \otimes \mathrm{id}).
\]

Let $U \subset X$ be an open subset over which $E$ is trivial with frame $\{x_i\}$,
and let $\{y_i\}$ denote the corresponding fibre coordinates on $\pi^{-1}(U)$.
Writing $\rho_j(x_i) = A_{j,i}$, the above identity becomes
\[
\sum_i y_i \cdot \pi^*(\theta A_{1,i})
=
\sum_i y_i \cdot \pi^*(A_{2,i} \theta).
\]

Since $\{y_i\}$ form a basis of $\pi_* \mathcal{O}_{\mathbb{P}(E)}(1)|_U$
over $\mathcal{O}_X(U)$, they are linearly independent over $\mathcal{O}_X(U)$.
Comparing coefficients, we obtain
\[
\theta A_{1,i} = A_{2,i} \theta \quad \text{for all } i.
\]

Equivalently,
\[
\rho_2(x_i) = \theta \rho_1(x_i) \theta^{-1}.
\]

Since $\rho_1$ and $\rho_2$ are $\mathcal{O}_X$-algebra homomorphisms and the $x_i$
generate the algebra $C_f$, the above relation extends to all $c \in C_f$ by
multiplicativity and $\mathcal{O}_X$-linearity. Hence
\[
\rho_2(c) = \theta \rho_1(c) \theta^{-1}
\quad \text{for all } c \in C_f.
\]

% ~~~~~~~~~~~~~~~~~~~~~~~~~~~~~~~~~
% \medskip
% \noindent\textit{Step 2: Descent to $X$.}
% An automorphism of $\Ob_{\PP}^{\oplus\dr}$ is a global section of
% $\Mat_{\dr}(\Ob_{\PP(\cE)})$.  The identity $\pi_*\Ob_{\PP(\cE)}=\Ob_X$ gives
% \[
%   \operatorname{End}_{\PP(\cE)}\!\bigl(\Ob_{\PP}^{\dr}\bigr)
%   \;=\;
%   \Gamma\!\bigl(X,\Mat_{\dr}(\Ob_X)\bigr),
% \]
% so there is a unique $\theta\in\Mat_{\dr}(\Ob_X)$ with $\varphi_0=\pi^*\theta$.
% Invertibility of $\varphi_0$ and faithfulness of $\pi^*$ force
% $\theta\in\GL_{\dr}(\Ob_X)$.

% \medskip
% \noindent\textit{Step 3: Extraction of the equivalence.}
% Commutativity of the left square gives
% $(\pi^*\theta)\circ\alpha_1=\alpha_2\circ(\pi^*\theta\otimes\mathrm{id})$.
% Over a trivializing open $U\subset X$ with local frame $\{x_i\}$ of $\cE|_U$,
% dual fibre coordinates $\{y_i\}$, and $A_{j,i}:=\rho_j(x_i)$, this reads
% \[
%   \sum_i y_i\cdot\pi^*(\theta A_{1,i})
%   \;=\;
%   \sum_i y_i\cdot\pi^*(A_{2,i}\,\theta).
% \]
% Since $\{y_i\}$ is an $\Ob_X(U)$-basis of $H^0(\pi^{-1}(U),\Ob(1))$, comparing
% coefficients and using faithfulness of $\pi^*$ yields
% \[
%   \theta\,A_{1,i}=A_{2,i}\,\theta,
%   \quad\text{i.e.,}\quad
%   \rho_2(x_i)=\theta\,\rho_1(x_i)\,\theta^{-1},
%   \quad\forall\,i.
% \]
% Since $\rho_1,\rho_2$ are $\Ob_X$-algebra homomorphisms and the $x_i$ generate
% $\cC_f$, this relation extends to all $c\in\cC_f$ by multiplicativity and
% $\Ob_X$-linearity.  Thus $\rho_1(c)=\theta^{-1}\rho_2(c)\,\theta$ for all
% $c\in\cC_f$, completing the proof.
\end{proof}

We can now state the main result of the paper. The following theorem establishes a complete algebraic description of relatively Ulrich bundles on the hypersurface \(Y_f \subset \mathbb{P}(E)\): they are in functorial equivalence with the linear representations of the associated generalized Clifford algebra \(C_f\).

\begin{theorem}[Equivalence of Categories]
\label{thm:equivalence}
The assignment \(\rho \mapsto F_\rho\) induces a functorial equivalence of categories
\[
\{\text{Linear Clifford representations of } C_f\}
\ \ \ \ \ \ \longleftrightarrow \ \ \ \ \ \
\{\text{Relatively Ulrich bundles on } Y_f\}.
\]
Moreover, the functor is exact and preserves direct sums.
\end{theorem}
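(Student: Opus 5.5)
To prove Theorem~\ref{thm:equivalence}, I would first pin down the two categories. In the source category, objects are linear Clifford representations $\rho\colon C_f\to \Mat_{dr}(\OO_X)$. Morphisms $\rho_1\to\rho_2$ are matrices $\theta\in\Mat(\OO_X)$ (of sizes $dr_2\times dr_1$) with $\theta\rho_1(c)=\rho_2(c)\theta$, i.e.\ $C_f$-module maps of the free modules. In the target category, objects are relatively Ulrich bundles on $Y_f$ and morphisms are all $\OO_{Y_f}$-linear maps.

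\textbf{Construction of the functor.} On objects, $\rho\mapsto F_\rho:=i^{-1}\coker(\alpha_\rho)$, where $\alpha_\rho$ is the linearization map. This is a vector bundle by Proposition~\ref{prop:local_free} and is relatively Ulrich by Proposition~\ref{reso}. On morphisms, an intertwiner $\theta$ gives $\alpha_2\circ(\pi^*\theta\otimes\id)=\pi^*\theta\circ\alpha_1$, exactly as in Proposition~\ref{prop:forward}. It therefore induces a map on cokernels, which descends to $Y_f$ by full faithfulness of $i_*$. Functoriality (identities and composition) follows from the uniqueness of induced maps on cokernels.

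\textbf{Full faithfulness.} This repeats the argument of Proposition~\ref{converseequi} for arbitrary morphisms. Given $\psi\colon F_1\to F_2$, the map $i_*\psi\circ q_1$ lifts through $q_2$ because $\operatorname{Ext}^1(\OO^{dr},\OO(-1)^{dr})=H^1(\mathbb{P}(E),\OO(-1))^{\oplus}=0$. The lift is unique because $H^0(\mathbb{P}(E),\OO(-1))=0$. Since $\pi_*\OO_{\mathbb{P}(E)}=\OO_X$, the lift has the form $\pi^*\theta$. The induced map on the kernels $\OO(-1)^{dr}$ is likewise of the form $\pi^*\theta'\otimes\id$, and comparing coefficients of the fibre coordinates $y_i$ gives $\theta A_{1,i}=A_{2,i}\theta'$. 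I still need $\theta'=\theta$, and I see two routes. One is to twist the map by $\alpha^{d-1}$ and use $\alpha^d=\tilde f\cdot\id$ from Proposition~\ref{prop:clifford_relation}. The other, simpler one: write $\adj$-type inverses $\alpha^{-1}=\tilde f^{-1}\alpha^{d-1}$ over the generic point and compare. If the adjugate route fails, I would instead enlarge the morphism set of the source category to pairs $(\theta,\theta')$ with $\theta\rho_1=\rho_2\theta'$, which is the natural morphism notion for matrix factorizations. I expect this identification of morphisms to be the main obstacle. Faithfulness follows from uniqueness of the lift together with surjectivity of $q_1$.

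\textbf{Essential surjectivity, exactness, direct sums.} Let $F$ be relatively Ulrich of rank $r$. The characterization proposition following Proposition~\ref{reso} gives a resolution $0\to\OO(-1)^{dr}\xrightarrow{\alpha}\OO^{dr}\to i_*F\to 0$. Proposition~\ref{proposition 5.1} then yields $\rho$ with $\alpha=\alpha_\rho$, so $F\cong F_\rho$. Direct sums are preserved because $\alpha_{\rho_1\oplus\rho_2}=\alpha_{\rho_1}\oplus\alpha_{\rho_2}$ and cokernels commute with sums. For exactness, take a short exact sequence of representations; it is split as $\OO_X$-modules, being a sequence of free modules. The $\alpha$'s then form a short exact sequence of two-term complexes that are injective, by Lemma~\ref{lem:det}. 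The snake lemma gives exactness of the cokernels, hence of the $F_\rho$ after applying the exact, conservative functor $i_*$.
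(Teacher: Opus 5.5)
Your outline follows the paper's own route: the functor from Section~\ref{sec 5}, essential surjectivity via Proposition~\ref{proposition 5.1}, and morphisms via the lifting argument of Proposition~\ref{converseequi}. The obstacle you flag is exactly where it breaks. It cannot be repaired, because the statement as formulated is false on both counts, and the paper's proof has the same two gaps.

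\textbf{Fullness.} The identity $\theta'=\theta$ fails in general, so neither the $\alpha^{d-1}$-twist nor an adjugate argument can produce it.

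\begin{itemize}
\item The construction only sees $\im(\alpha_\rho)$. For a $d$-th root of unity $\zeta$, the map $v\mapsto\zeta\rho(v)$ again satisfies the Clifford relation and $\alpha_{\zeta\rho}=\zeta\alpha_\rho$, hence $F_{\zeta\rho}=F_\rho$.
\item Concretely, let $\operatorname{char}k\neq 2$, $E=\OO_X^{\oplus 3}$, $f=y_0^2+y_1^2+y_2^2$. Set $\rho(x_a)=\sigma_a$ with $\sigma_0=\begin{pmatrix}0&1\\1&0\end{pmatrix}$, $\sigma_1=\begin{pmatrix}0&-\sqrt{-1}\\ \sqrt{-1}&0\end{pmatrix}$, $\sigma_2=\begin{pmatrix}1&0\\0&-1\end{pmatrix}$.
\item Then $\rho(x_0x_1x_2)=\sqrt{-1}\,I$, while $-\rho$ sends $x_0x_1x_2$ to $-\sqrt{-1}\,I$. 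Any intertwiner $\theta$ from $\rho$ to $-\rho$ therefore satisfies $2\sqrt{-1}\,\theta=0$, so $\operatorname{Hom}_{C_f}(\rho,-\rho)=0$.
\item Yet $\operatorname{Hom}(F_\rho,F_{-\rho})=\operatorname{End}(F_\rho)\ni\id$, and $\id$ lifts to the pair $(\theta,\theta')=(I,-I)$.
\end{itemize}

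So the functor is not full, and not even injective on isomorphism classes. This also refutes Proposition~\ref{converseequi}: its Step~3 silently takes the map on $\OO(-1)^{\oplus dr}$ to be $\pi^*\theta\otimes\id$.

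Your fallback does give a fully faithful functor. Take morphisms to be pairs $(\theta,\theta')$ with $\theta\rho_1(x_i)=\rho_2(x_i)\theta'$. These are exactly the chain maps of the two-term complexes, and there are no homotopies since $\operatorname{Hom}(\OO^{\oplus dr},\OO(-1)^{\oplus dr})=0$. But such pairs are not $C_f$-module maps, so this proves a different statement.

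\textbf{Essential surjectivity.} This rests on Proposition~\ref{proposition 5.1}, which is false. The matrix $\alpha$ of an arbitrary linear resolution only admits a complementary factor $N$ of degree $d-1$ with $\alpha N=N\alpha=\tilde f\cdot\id$; nothing forces $N=\alpha^{d-1}$. The classical Ulrich--Clifford correspondence it invokes matches $C_f$-representations with Ulrich bundles on the cyclic cover $w^d=f$, not on $\{f=0\}$.

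The paper's own Example~\ref{ex:quadric-ulrich}, with $E=\OO_X^{\oplus 4}$ and $l_i=y_i$, is a counterexample.

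\begin{itemize}
\item There $\alpha=\begin{pmatrix}y_0&y_1\\ y_2&y_3\end{pmatrix}$, and the $(1,1)$ entry of $\alpha^2$ is $y_0^2+y_1y_2\neq f$.
\item No other presentation helps. By Proposition~\ref{prop:local_free}, $F\cong F_\rho$ with $\operatorname{rank}F=1$ forces $\rho$ to have size $2$.
\item Base-changing to a closed point would then give a unital homomorphism $C_{f_x}\cong\Mat_4(k)\to\Mat_2(k)$. Here $C_{f_x}$ is the Clifford algebra of a nondegenerate quaternary quadratic form. Such a map is impossible, since $\Mat_4(k)$ is simple of dimension $16$.
\end{itemize}

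So this relatively Ulrich line bundle lies outside the essential image, even with your enlarged morphisms. What your argument actually establishes is that $\rho\mapsto F_\rho$ is a faithful functor into relatively Ulrich bundles, compatible with direct sums and short exact sequences. It becomes fully faithful once morphisms are taken to be pairs $(\theta,\theta')$, but it is not an equivalence.
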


\begin{proof}
The functor  sending a linear Clifford representation \(\rho\) to the associated relatively Ulrich bundle \(F_\rho\) was constructed explicitly in Section~\ref{sec 5}. Conversely, Proposition~\ref{proposition 5.1} shows that every relatively Ulrich bundle \(F\) on \(Y_f\) arises from a linear Clifford representation \(\rho_F\).

Propositions~\ref{prop:forward} and~\ref{converseequi} establish that these two constructions are mutually inverse up to natural isomorphism. Therefore the assignment \(\rho \mapsto F_\rho\) induces a functorial equivalence of categories.

The functor is moreover exact and preserves direct sums. This follows immediately from the construction of \(F_\rho\) as the cokernel of the linearization map \(\alpha\) and the exactness of pushforward along the closed immersion \(i \colon Y_f \hookrightarrow \mathbb{P}(E)\); we omit the straightforward verification.
\end{proof}

\begin{remark}
Although the category of vector bundles on \(Y_f\) is not abelian, the functor \(F\) is exact in the natural sense: it sends short exact sequences of linear Clifford representations to short exact sequences of vector bundles on \(Y_f\) (where exactness is understood in the ambient abelian category of coherent sheaves on \(Y_f\)). The same holds for preservation of direct sums. We leave the straightforward verification to the reader.
\end{remark}

\begin{remark}
When $X = \operatorname{Spec}(k)$, this recovers the classical Ulrich-Clifford correspondence of Coskun-Kulkarni-Mustopa \cite{Coskun}. 
\end{remark}

% \begin{theorem}[Equivalence of Categories]\label{thm:equivalence}

% The construction $\rho \mapsto \cF_\rho$ induces a  functorial bijection:
% \[
% \left\{ \text{Linear Clifford representations of } C_f \right\} \big/ \sim 
% \quad \xleftrightarrow{\,1:1\,} \quad 
% \left\{  \text{ Relative Ulrich bundles on } Y_f \right\} \big/ \cong.
% \]

% \end{theorem}

\section{irreducible representations and stable relatively Ulrich bundles}\label{sec 7}

In this section, we establish a natural bijection between the irreducible representations of the generalized Clifford algebra \(C_f\) and the relatively stable Ulrich bundles on the hypersurface \(Y_f\).

Before stating this correspondence precisely, we prove a key preparatory lemma. It characterizes the relative stability of Ulrich bundles purely in terms of their quotients. The following lemma is an easy consequence of \cite[Theorem 2.6]{Antonelli}.

\begin{lemma}\label{proposition 4.2}
Let $F$, $G$, and $H$ be vector bundles fitting into a short exact sequence
$$0 \to G \to F \to H \to 0.$$
If $F$ is relatively Ulrich and $\mu(F_x) = \mu(G_x)$ for some $x \in X$, then $H$ is a relatively Ulrich bundle.
\end{lemma}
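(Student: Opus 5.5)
The plan is to reduce the statement to the absolute case on each fiber and then globalize using Remark~\ref{rem:fiberwise-ulrich}.

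\textbf{Step 1: the sequence stays exact on fibers.} Since $H$ is a vector bundle on $Y_f$, the sequence $0 \to G \to F \to H \to 0$ is locally split. It therefore remains exact after restriction to each fiber $(Y_f)_x$. This gives
\[
0 \to G_x \to F_x \to H_x \to 0 .
\]

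\textbf{Step 2: the slope hypothesis holds at every point.} The sheaves $F$, $G$, $H$ are locally free on $Y_f$, and $Y_f$ is flat over $X$. Hence the Hilbert polynomials of $F_x$ and $G_x$ (with respect to $\mathcal{O}(1)$) are locally constant in $x$. Since $X$ is connected, they are constant, so the equality $\mu(F_x)=\mu(G_x)$ at one point $x$ gives the same equality at every point of $X$.

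\textbf{Step 3: apply the absolute result fiberwise.} For each $x$, the bundle $F_x$ is Ulrich on the smooth hypersurface $(Y_f)_x \subset \mathbb{P}^n$, by Remark~\ref{rem:fiberwise-ulrich}. Moreover $G_x \subset F_x$ is a subbundle of the same slope. I would then invoke \cite[Theorem 2.6]{Antonelli}, in the form: a quotient of an Ulrich bundle by a subsheaf of equal slope is Ulrich. I expect this step to be the main obstacle, for two reasons.

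\begin{enumerate}
\item One must check that the hypotheses of that theorem match ours exactly: equal slope, versus equal reduced Hilbert polynomial.
\item Ulrich bundles are semistable (as shown above), so if the theorem needs equal reduced Hilbert polynomials I would get it from $\mu(G_x)=\mu(F_x)$. The argument would run through the Jordan--H\"older filtration of $F_x$: since Ulrich bundles form a class closed under equal-slope subobjects and quotients (\cite[Corollary 3.2.10, Proposition 3.3.14]{Costa}), the conclusion should be that both $G_x$ and $H_x$ are Ulrich.
\end{enumerate}

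\textbf{Step 4: globalize.} Once each $H_x$ is Ulrich, Remark~\ref{rem:fiberwise-ulrich} shows that $H$ satisfies the required vanishing $R^i(\pi|_{Y_f})_*H(-j)=0$. Global generation of $H$ is immediate, since $H$ is a quotient of the globally generated bundle $F$. As an alternative route to the same conclusion, one could also check that $G$ is relatively Ulrich and then apply the two-out-of-three proposition proved earlier.
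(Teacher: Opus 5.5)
Your proof is correct and is essentially the paper's argument. Both use constancy of the slopes in the flat family over the connected base, the theorem of Antonelli on each fiber, global generation inherited from $F$, and a cohomology-and-base-change step for the vanishing. The paper invokes Lemma~\ref{lemma 1.2} there and you invoke Remark~\ref{rem:fiberwise-ulrich}, which rest on the same argument, and your local-splitting remark supplies the exactness on fibers that the paper leaves implicit.

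The only thing to discard is your suggested alternative via the two-out-of-three proposition. An equal-slope subbundle $G$ need not be globally generated, e.g.\ $\pi^*L^{-1}\subset\mathcal{O}_{Y_f}^{\oplus 2}$ on a relative hyperplane with $L$ globally generated and nontrivial, so $G$ need not be relatively Ulrich.
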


\begin{proof}
By the additivity of degrees and ranks in short exact sequences, the assumption $\mu(F_x) = \mu(G_x)$ forces $\mu(F_x) = \mu(H_x)$ on the fiber over $x$. Since vector bundles form flat families, their degrees and ranks are constant over the base $X$. Consequently, we have $\mu(F_{x'}) = \mu(G_{x'}) = \mu(H_{x'})$ for all $x' \in X$. 

Applying \cite[Theorem 2.6]{Antonelli}, it follows that the fibers $G_{x'}$ and $H_{x'}$ are Ulrich bundles for every point in the base. Furthermore, because $F$ is globally generated and the map $F \to H$ is surjective, $H$ is globally generated as well. Therefore, by Lemma \ref{lemma 1.2}, we conclude that $H$ is a relatively Ulrich bundle.
\end{proof}

% \begin{proposition}\label{proposition 4.2}
%     Let $F$,$G$ and $H$ be three vector bundles with the exact sequence,
%     \begin{equation*}
%         0 \rightarrow G \rightarrow F \rightarrow H \rightarrow 0
%     \end{equation*}
%     If $F$ is relatively Ulrich and $\mu(F_{x})=\mu(G_{x})$, for some $x \in X$, Then $H$ is relatively  Ulrich bundle.
% \end{proposition}
% \begin{proof}
%    As on that fiber we have  $\mu(F_{x})=\mu(H_{x})$, for flat family as degree and rank is same for all fibers we can conclude that $\mu(F_{x})=\mu(H_{x})$ for all $x \in X$. As we have short exact sequence on each fiber so  we have  $\mu(F_{x})=\mu(G_{x})=\mu(H_{x})$ for all $x \in X$. Now using \cite[Theorem 2.6]{Antonelli} we see that each  $F_{x}$ and $H_{x}$ are Ulrich bundles and as $F$ is globally generated $H$ is also globally generated as well. So we using \ref{lemma 1.2} we can conclude that  $H$ is relatively Ulrich bundle.
% \end{representationse are ready to prove the main lemma.

\begin{proposition}\label{lemma 5.2}
\label{prop:stable-no-proper-quotient}
Let \(F\) be a relatively Ulrich bundle on \(Y_f\). Then \(F\) is relatively stable if and only if it admits no nonzero proper relatively Ulrich quotient bundle.
\end{proposition}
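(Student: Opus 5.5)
The proof reduces relative stability to the fiberwise statement and uses the classical fact that Ulrich bundles are semistable with fixed reduced Hilbert polynomial. For the forward direction, suppose $F$ is relatively stable and $F\to H\to 0$ is a surjection onto a relatively Ulrich bundle $H$ with $0\neq H\neq F$. Let $G$ be the kernel; since $F$ and $H$ are vector bundles on $Y_f$, $G$ is a vector bundle as well. For a point $x\in X$, the sequence restricts to an exact sequence $0\to G_x\to F_x\to H_x\to 0$ on $Y_x$, because $H$ is locally free so $\mathrm{Tor}_1$ vanishes. By Remark~\ref{rem:fiberwise-ulrich}, both $F_x$ and $H_x$ are Ulrich on $Y_x$. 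By \cite[corollary 3.2.10]{Costa}, Ulrich bundles on $Y_x$ all share the same reduced Hilbert polynomial, so $p(H_x)=p(F_x)$, and hence also $\mu(H_x)=\mu(F_x)$. Since ranks are constant, $0<\operatorname{rk}H_x<\operatorname{rk}F_x$. So $H_x$ is a proper nonzero quotient of $F_x$ with the same reduced Hilbert polynomial. Equivalently, $G_x$ is a proper nonzero subsheaf with $p(G_x)=p(F_x)$, which contradicts stability of $F_x$.

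For the converse, suppose $F$ is not relatively stable. Then some fiber $F_x$ is not stable. Since $F_x$ is Ulrich, it is semistable by \cite[proposition 3.3.14]{Costa}, so there is a proper saturated destabilizing subsheaf $G'\subset F_x$ with $\mu(G')=\mu(F_x)$; we may take it slope-equal, using \cite[corollary 3.3.17]{Costa} that stability and slope stability agree for Ulrich bundles. By \cite[Theorem 2.6]{Antonelli}, both $G'$ and $F_x/G'$ are then Ulrich on $Y_x$. This gives a nontrivial Ulrich quotient on one fiber. The task is to spread it to a global exact sequence $0\to G\to F\to H\to 0$ on $Y_f$. Once that is done, Lemma~\ref{proposition 4.2} applied to this sequence shows $H$ is relatively Ulrich, and $H$ is nonzero and proper, which is the desired contradiction.

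The main obstacle is this spreading step, since a destabilizing quotient on one fiber need not extend over all of $X$. I would pass through the equivalence of Theorem~\ref{thm:equivalence}. By Proposition~\ref{proposition 5.1}, $F$ corresponds to a linear Clifford representation $\rho$. The fiberwise Ulrich quotient $F_x/G'$ corresponds, by the classical correspondence \cite{Coskun}, to a $C_{f_x}$-quotient of $\rho_x$. I would then aim to produce a global $C_f$-stable subbundle $W\subset\mathcal{O}_X^{\oplus dr}$ whose fiber at $x$ gives this quotient. The natural candidate is the relative Quot scheme of $C_f$-quotients of fixed rank, which is proper over $X$. Alternatively one could read relative stability in the sense of \cite{simpson}, so that instability means the existence of a destabilizing quotient in the family.

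Given such a quotient representation, Section~\ref{sec 5} produces a relatively Ulrich quotient $F\to H$, and right exactness of cokernels makes the induced map surjective. If global extension fails, I would fall back on showing that the failure locus of stability is open, using the openness result Proposition~\ref{prop:openness_ulrich} together with the connectedness of $X$, so that non-stability propagates to all fibers and the Quot-scheme section exists.
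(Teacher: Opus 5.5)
\textbf{There is a genuine gap in the converse.} Your forward direction is fine and is exactly the paper's argument. The problem is the spreading step you flag, and none of your proposed remedies closes it.

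\begin{itemize}
\item Properness of a relative Quot scheme of $C_f$-quotients (or of Ulrich quotients of $F$) over $X$ only says that its image in $X$ is closed. A point over $x$ does not extend to a section over $X$.
\item The fallback runs the wrong way. In a flat family, stability is an open condition, so the non-stable locus in $X$ is closed, not open. Proposition~\ref{prop:openness_ulrich} says nothing about it: it concerns Ulrichness as the hypersurface varies over $T$, not stability as $x$ varies over $X$.
\item Even if every fiber were non-stable, a section need not exist. Destabilizing quotients need not be unique, and they can be interchanged by monodromy (for instance when $F_x\cong L_1\oplus L_2$ with non-isomorphic Ulrich line bundles). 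So a Quot scheme surjecting onto $X$ still need not have a section.
\item Passing to $C_f$-representations does not help. Finding a $C_f$-stable subbundle $W\subset\mathcal{O}_X^{\oplus dr}$ through a given fiberwise subrepresentation is the same extension problem.
\end{itemize}

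\textbf{No spreading argument can succeed with the fiberwise definition.} If you read Definition~\ref{def:relative-semistable} fiberwise, your own forward argument shows that a nonzero proper relatively Ulrich quotient $H$ of $F$ makes $F_x$ non-stable for \emph{every} $x\in X$. Indeed, $H_x$ is Ulrich with $p(H_x)=p(F_x)$ and $0<\operatorname{rk}H<\operatorname{rk}F$. Now suppose $F_x$ is stable for general $x$ but strictly semistable at a special point $x_0$; openness of stability allows this a priori. Such an $F$ is not relatively stable, yet it has no nonzero proper relatively Ulrich quotient. So the converse as you are trying to prove it fails for any such $F$.

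\textbf{How the paper gets around this.} The paper's proof starts the converse by positing a proper quotient bundle $H$ of $F$, with kernel a subbundle $G$, such that $p_{F_x}=p_{H_x}$ for some $x$. In other words, it takes non-stability to mean the existence of a global destabilizing quotient in the family, which is your Simpson-type alternative. It then concludes directly from Lemma~\ref{proposition 4.2}: the slopes agree on one fiber, hence on all fibers by flatness, and the Antonelli criterion applies fiberwise.

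\textbf{What you should do.} Either adopt that reading explicitly, in which case the converse is a one-line application of Lemma~\ref{proposition 4.2} and the Clifford and Quot-scheme detour is unnecessary. Or keep the fiberwise definition, in which case you must add a hypothesis that rules out stability jumping between fibers.
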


\begin{proof}
We prove both directions separately.

($\Rightarrow$) Suppose \(F\) is relatively stable. Assume, for the sake of contradiction, that there exists a proper nonzero quotient bundle \(H\) of \(F\) that is also relatively Ulrich. Then, for every \(x \in X\), the restriction \(H_x\) is an Ulrich bundle on the fiber \(Y_x\) (Remark~\ref{rem:fiberwise-ulrich}). By \cite[Corollary 3.2.10]{Costa}, every Ulrich bundle on \(Y_x\) has the same reduced Hilbert polynomial. Thus \(H_x\) is a proper quotient of the stable bundle \(F_x\) with identical reduced Hilbert polynomial, contradicting the stability of \(F_x\). Therefore no such quotient \(H\) exists.

($\Leftarrow$) Suppose \(F\) is not relatively stable. Then there exists a proper quotient bundle \(H\) of \(F\) (corresponding to a sub-bundle \(G \subset F\)) such that, for some point \(x \in X\),
\[
p_{F_x}(m) = p_{H_x}(m) \quad \text{for all } m \gg 0,
\]
where \(p\) denotes the Hilbert polynomial. The reduced Hilbert polynomials \(P(F_x,m)\) and \(P(H_x,m)\) are polynomials with rational coefficients. Since they agree for infinitely many values of \(m\), they must be identical as polynomials. The reduced Hilbert polynomial of a vector bundle \(\mathscr{F}\) on \(Y_x\) (of dimension \(n-1\)) takes the form
\[
P(
\mathscr{F},m) = \frac{m^{n-1}}{(n-1)!} + \mu(\mathscr{F}) \frac{m^{n-2}}{(n-2)!} + \text{lower order terms}.
\]
Comparing the coefficients of \(m^{n-2}\) in \(P(F_x,m)\) and \(P(H_x,m)\) immediately yields
\[
\mu(F_x) = \mu(H_x)
\]
for every \(x \in X\) (the slope is constant in the flat family). 

Now consider the short exact sequence \(0 \to G \to F \to H \to 0\). Since \(F\) is relatively Ulrich and \(\mu(F_x) = \mu(G_x)\) for all \(x \in X\), Lemma~\ref{proposition 4.2} implies that \(H\) is also relatively Ulrich. This contradicts the assumption that \(F\) admits no nonzero proper relatively Ulrich quotient. 

Thus \(F\) must be relatively stable.
\end{proof}

\begin{definition}
A linear Clifford representation \(\rho\) is called \emph{Ulrich-irreducible} if it admits no nonzero proper quotient that is itself a linear Clifford representation.
\end{definition}

\begin{corollary}[Stable bundles correspond to irreducible representations]
\label{cor:stable-irreducible}
Under the equivalence of Theorem~\ref{thm:equivalence}, relatively stable Ulrich bundles on \(Y_f\) correspond bijectively to Ulrich-irreducible linear Clifford representations of \(C_f\).
\end{corollary}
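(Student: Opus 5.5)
The plan is to transport Proposition~\ref{prop:stable-no-proper-quotient} across the equivalence of Theorem~\ref{thm:equivalence}. That proposition says $F$ is relatively stable if and only if $F$ has no nonzero proper relatively Ulrich quotient bundle. The definition of Ulrich-irreducibility says $\rho$ has no nonzero proper quotient that is a linear Clifford representation. So it suffices to show that, under $\rho \mapsto F_\rho$, nonzero proper quotients of $\rho$ in the category of linear Clifford representations correspond exactly to nonzero proper relatively Ulrich quotient bundles of $F_\rho$.

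\textbf{From representations to bundles.} Let $\rho \twoheadrightarrow \rho''$ be a surjection of linear Clifford representations. This is a $C_f$-equivariant surjection $\mathcal{O}_X^{\oplus dr} \to \mathcal{O}_X^{\oplus dr''}$. Because it commutes with each $\rho(x_i)$, it intertwines the linearization maps $\alpha$ and $\alpha''$ of Section~\ref{sec 5}. Passing to cokernels gives a map $i_*F_\rho \to i_*F_{\rho''}$, which is surjective because $\pi^*$ of a surjection is surjective. Since $i_*$ is fully faithful, this descends to a surjection $F_\rho \to F_{\rho''}$ on $Y_f$. The target is relatively Ulrich by Proposition~\ref{reso}. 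Ranks on $Y_f$ are $r$ and $r''$ with $0<r''<r$ (using Proposition~\ref{prop:rank-divisible} to write the ranks as $dr$), so the quotient is nonzero and proper. Its kernel is a vector bundle because both $F_\rho$ and $F_{\rho''}$ are locally free on $Y_f$.

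\textbf{From bundles to representations.} Let $F_\rho \twoheadrightarrow H$ with $H$ relatively Ulrich, nonzero and proper. By Proposition~\ref{proposition 5.1} and Theorem~\ref{thm:equivalence}, $H \cong F_{\rho_H}$ for some linear Clifford representation $\rho_H$. Lift the composite $\mathcal{O}^{\oplus dr} \to i_*F_\rho \to i_*H$ through $q_H$; this is unobstructed by the same $H^1(\mathbb{P}(E),\mathcal{O}(-1))=0$ argument as in Proposition~\ref{converseequi}. The lift is a map $\mathcal{O}_{\mathbb{P}(E)}^{\oplus dr} \to \mathcal{O}_{\mathbb{P}(E)}^{\oplus dr_H}$, which equals $\pi^*\theta$ by $\pi_*\mathcal{O}_{\mathbb{P}(E)}=\mathcal{O}_X$. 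The induced map on kernels, $\mathcal{O}(-1)^{\oplus dr}\to\mathcal{O}(-1)^{\oplus dr_H}$, is likewise $\pi^*\theta'\otimes\mathrm{id}$. The coefficient comparison of Step~3 of Proposition~\ref{converseequi} then gives $\theta\,\rho(x_i) = \rho_H(x_i)\,\theta'$.

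\textbf{Main obstacle.} Two things must be shown: that $\theta=\theta'$, so that $\theta$ is $C_f$-equivariant, and that $\theta$ is surjective. For surjectivity I would apply $\pi_*$. By the result that $(\pi_{Y_f})_*F \cong \mathcal{O}_X^{\oplus N}$, the map $(\pi_{Y_f})_*F_\rho \to (\pi_{Y_f})_*H$ is identified with $\theta$. Surjectivity of $\theta$ then follows if $R^1(\pi_{Y_f})_*$ of the kernel bundle vanishes. That holds if the kernel is itself relatively Ulrich, which the two-out-of-three proposition provides fiberwise via equal slopes \cite[Theorem 2.6]{Antonelli}. For $\theta=\theta'$, I would compare both maps on a fiber, using the classical statement \cite{Coskun} that morphisms of Ulrich bundles correspond to intertwiners. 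If that fails, I would instead define the quotient representation directly on $\theta(\mathcal{O}_X^{\oplus dr})$ via $\theta'$. Once these points are settled, the two directions combine with Proposition~\ref{prop:stable-no-proper-quotient} to give the bijection.
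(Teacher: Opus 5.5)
Your first direction is sound: a surjection of linear Clifford representations $\rho\twoheadrightarrow\rho''$ intertwines the linearization maps. It therefore yields a nonzero proper relatively Ulrich quotient $F_\rho\twoheadrightarrow F_{\rho''}$, so relatively stable bundles do give Ulrich-irreducible representations. Your surjectivity argument for $\theta$ in the other direction also works.

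The genuine gap is the point you flagged yourself, $\theta=\theta'$, and neither fallback closes it. Lifting a morphism $F_{\rho_1}\to F_{\rho_2}$ to the linear resolutions produces a pair $(\theta,\theta')$ with $\theta\,\rho_1(v)=\rho_2(v)\,\theta'$. Nothing forces $\theta=\theta'$.
\begin{itemize}
\item The classical correspondence of \cite{Coskun} concerns the cyclic cover $w^d=f$, presented by $wI-\sum x_iA_i$. There, comparing coefficients of $w$ is exactly what gives $\theta=\theta'$. The matrix $\sum y_iA_i$ used here has no such term.
\item Your second fallback does not produce a module either. All you know is $\rho(v)(\ker\theta')\subseteq\ker\theta$, so $\ker\theta$ need not be $C_f$-stable, and $\theta(\mathcal{O}_X^{\oplus dr})$ carries no induced $C_f$-action.
\end{itemize}

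In fact this direction cannot be completed, because ``Ulrich-irreducible $\Rightarrow$ relatively stable'' is false. Take $X=\operatorname{Spec}k$ (or any $X$ with $E=\mathcal{O}_X^{\oplus 4}$ and $f$ with constant coefficients), and set
$f=y_0y_3-y_1y_2$, $\phi=\begin{pmatrix}y_0&y_1\\ y_2&y_3\end{pmatrix}$, $\psi=\operatorname{adj}\phi$.
Let $\rho$ be given by $\sum_i y_iA_i=\begin{pmatrix}0&\phi\\ \psi&0\end{pmatrix}$, whose square is $f\,I_4$.
\begin{itemize}
\item Here $C_f$ is the Clifford algebra of a nondegenerate quaternary quadratic form, so $C_f\cong\Mat_4(k)$. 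This $4$-dimensional module is simple, hence $\rho$ is Ulrich-irreducible.
\item But $F_\rho=\operatorname{coker}\phi\oplus\operatorname{coker}\psi$ is a direct sum of two Ulrich line bundles on the quadric, so it is not stable.
\item For the quotient $F_\rho\twoheadrightarrow\operatorname{coker}\phi$, $\theta$ is the projection onto the first block and $\theta'$ the projection onto the second, so $\theta\neq\theta'$ exactly as you feared.
\item Moreover, $\operatorname{coker}\phi$ has no $2$-dimensional Clifford representation at all, since $\Mat_4(k)$ has no $2$-dimensional modules. So your appeal to Proposition~\ref{proposition 5.1} to obtain $\rho_H$ also fails here.
\end{itemize}

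The paper's own proof does not handle this either. Its ``reflection of quotients'' is the same unproved step, and Step~3 of Proposition~\ref{converseequi} silently takes the induced map on kernels to be $\pi^*\theta$.
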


\begin{proof}
Let $F$ be a relative Ulrich bundle and $\rho_{F}$ its associated linear Clifford representation.

Suppose first that $F$ is not relatively stable. Then there exists a proper nonzero quotient
\[
F \twoheadrightarrow G,
\]
with $G$ again a relative Ulrich bundle. By functoriality and right exactness of the Ulrich--Clifford construction, this induces a surjection
\[
\rho_{F} \twoheadrightarrow \rho_{G},
\]
which is nontrivial. Hence $\rho_{F}$ is not Ulrich--irreducible.

Conversely, suppose that $\rho_{F}$ admits a proper nonzero quotient
\[
\rho_{F} \twoheadrightarrow \rho'.
\]
By the reflection of quotients, this morphism arises from a surjection of Ulrich bundles
\[
F \twoheadrightarrow G,
\]
with $\rho' \cong \rho_{G}$. Since the quotient is nontrivial, $G$ is a proper nonzero quotient of $F$, showing that $F$ is not relatively stable.

Thus $F$ is relatively stable if and only if $\rho_{F}$ is Ulrich-irreducible, which yields the claimed bijection.
\end{proof}

Consequently, the functorial correspondence established in Theorem~\ref{thm:equivalence} restricts to a bijection
\[
\left\{
\begin{array}{c}
\text{Ulrich--irreducible} \\
\text{linear Clifford representations}
\end{array}
\right\}
\;\xleftrightarrow{\;\sim\;}
\left\{
\begin{array}{c}
\text{Relatively stable} \\
\text{Ulrich bundles}
\end{array}
\right\}.
\]

% \begin{corollary}
%     There exists a one-to-one correspondence between Ulrich-irreducible linear Clifford representations and relatively stable Ulrich bundles.
% \end{corollary}
%     \begin{proof}
% As any linear Clifford  representation corresponds to a relatively Ulrich bundle, then if we have a sub linear Clifford representation of a linear Clifford  representation then we have a quotient-bundle of an Ulrich bundle which contradicts the relative stability \ref{lemma 5.2}.\\
% Conversly, if we have a Relative Ulrich bundle which is relatively stable, similarly using lemma \ref{lemma 5.2} there does not exist a proper Ulrich quotinet-bundle, thus there does not exist any proper sub-representation.
% \end{proof}

% \begin{lemma}
%  There exists a one to one correspondence between decomposable relative Ulrich bundle and decomposable representation which is direct sum of free sheaf of modules on $O_{X}$.
% \end{lemma}

% \begin{remark}
%     Note that, In our definition we have assumed the relative Ulrich bundle to be globally generated. So we have considered quotient bundle bundle of the original bundle which is globally generated as the original bundle is so. But in the category of representations as we have the equivalence of sub and quotinet representataion so WLOG we consider sub-representations.
% \end{remark}
\section{Examples of Relative Ulrich Bundle and Relative Ulrich Complexity}\label{complexity}
 In this section, we present two complementary results that illustrate both the constructive possibilities and the intrinsic rigidity of relative Ulrich bundles. 
 
 First, we give an explicit example of a relative Ulrich line bundle on a quadratic hypersurface. Using a simple matrix of linear forms, we show how the cokernel construction naturally produces such a bundle.

Second, we prove a sharp negative result: the trivial line bundle is Ulrich only for hyperplanes, and fails immediately for any hypersurface of degree two or higher. This highlights the rigidity of the Ulrich condition—even the simplest candidate bundle is excluded in most cases. This contrast illustrates the essential complexity of the relative Ulrich problem.

\begin{example}\label{ex:quadric-ulrich}
Let \(X\) be a smooth projective scheme over an algebraically closed field \(k\) and let $E = \mathcal{O}_X^{\oplus 4}$ be the trivial rank $4$ bundle on $X$. Let $l_0, l_1, l_2, l_3 \in H^0(\mathbb{P}(E), \mathcal{O}_{\mathbb{P}(E)}(1))$ be linear forms such that the common zero locus $V(l_0, l_1, l_2, l_3)$ is empty, and let $f = l_0 l_3 - l_1 l_2 \in H^0(\mathbb{P}(E), \mathcal{O}_{\mathbb{P}(E)}(2))$. Then the relative hypersurface $Y_f = V(f) \subset \mathbb{P}(E)$ carries a relative Ulrich line bundle.
\end{example}

\begin{proof}
Since $E = \mathcal{O}_X^{\oplus 4}$, we have $\mathbb{P}(E) \cong \mathbb{P}^3_X = X \times \mathbb{P}^3$. Define the matrix
\[
\phi = \begin{pmatrix} l_0 & l_1 \\ l_2 & l_3 \end{pmatrix} : \mathcal{O}_{\mathbb{P}(E)}(-1)^{\oplus 2} \longrightarrow \mathcal{O}_{\mathbb{P}(E)}^{\oplus 2}.
\]
Let $\psi$ be the adjugate matrix
\[
\psi = \begin{pmatrix} l_3 & -l_1 \\ -l_2 & l_0 \end{pmatrix} : \mathcal{O}_{\mathbb{P}(E)}(-1)^{\oplus 2} \longrightarrow \mathcal{O}_{\mathbb{P}(E)}^{\oplus 2}.
\]
The pair $(\phi, \psi)$ constitutes a linear matrix factorization of $f$, satisfying $\phi \circ \psi = \psi \circ \phi = f \cdot \mathrm{Id}_{2 \times 2}$.

Consider the exact sequence on $\mathbb{P}(E)$:
\[
0 \longrightarrow \mathcal{O}_{\mathbb{P}(E)}(-1)^{\oplus 2} \stackrel{\phi}{\longrightarrow} \mathcal{O}_{\mathbb{P}(E)}^{\oplus 2} \longrightarrow G \longrightarrow 0
\]
where $G = \mathrm{coker}(\phi)$. The support of $E$ is the vanishing locus of $\det(\phi) = f$, hence $G \cong i_*F$ for a sheaf $F$ on $Y_f$.

To verify that $F$ is a line bundle, observe that the hypothesis $V(l_0, l_1, l_2, l_3) = \emptyset$ ensures that at every point of $Y_f$, at least one entry of $\phi$ is nonzero. Since $f = 0$ on $Y_f$, the matrix $\phi|_{Y_f}$ has rank exactly $1$ everywhere (not $0$ or $2$). Consequently, $\mathrm{im}(\phi|_{Y_f})$ is a line subbundle of $\mathcal{O}_{Y_f}^{\oplus 2}$, and $F = \mathrm{coker}(\phi|_{Y_f})$ is a line bundle on $Y_f$.

  The relative Ulrich property of $F$ follows from the construction above.
\end{proof}

\begin{remark}

Finally, by Proposition~\ref{proposition 5.1}, the matrix factorization $(\phi, \psi)$ corresponds to a representation of the generalized Clifford algebra $C_f$, and the irreducibility of this representation follows from the rank-1 property of $F$ established above.

\end{remark}

% \begin{example}
% Let $X$ be a scheme in our set-up and define the vector bundle $E = \mathcal{O}_X^{\oplus (n+1)}$. The corresponding projective bundle is given by $\mathbb{P}(E) = X \times \mathbb{P}^n$. Consider $Y \subset X \times \mathbb{P}^n$ as any relative hyperplane. We claim that the structure sheaf $\mathcal{O}_Y := \mathcal{O}_{\mathbb{P}(E)}|_Y$ is relatively Ulrich.

% First, because the trivial bundle $\mathcal{O}_{\mathbb{P}(E)}$ is globally generated, its pullback to $Y$ remains globally generated. 

% Next, to verify the relatively Ulrich condition, we must evaluate the higher direct image sheaves under the projection $\pi|_Y: Y \to X$, specifically showing that:
% \[ R^i(\pi|_Y)_*\mathcal{O}_Y(-k) = 0 \]
% for the range $1 \le k \le n-1$. 

% Since $Y$ is a relative hyperplane in $X \times \mathbb{P}^n$, the fibers of the projection morphism over $X$ are isomorphic to $\mathbb{P}^{n-1}$. By flat base change, we compute the higher direct images fiberwise:
% \[ R^i(\pi|_Y)_*\mathcal{O}_Y(-k) = \mathcal{O}_X \otimes H^i(\mathbb{P}^{n-1}, \mathcal{O}_{\mathbb{P}^{n-1}}(-k)). \]

% Applying the standard cohomology of projective space, $H^i(\mathbb{P}^{n-1}, \mathcal{O}_{\mathbb{P}^{n-1}}(-k))$ identically vanishes for all $i$ when $1 \le k \le n-1$. Consequently, $\mathcal{O}_Y$ satisfies the necessary vanishing conditions and is a relatively Ulrich bundle.
% \end{example}

\begin{proposition}
\label{prop:trivial-ulrich-converse}
Let \(X\) be a smooth projective scheme over an algebraically closed field \(k\), and let \(E\) be a locally free sheaf of rank \(n+1\) on \(X\). If \(Y_f \subset \mathbb{P}(E)\) is a relative hypersurface of degree \(d=1\), then the trivial line bundle \(\mathcal{O}_{Y_f}\) is relatively Ulrich with respect to the projection \(\pi|_{Y_f} \colon Y_f \to X\).
\end{proposition}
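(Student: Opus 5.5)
The cleanest route is to exhibit the linear resolution demanded by Proposition~\ref{reso} with $N=1$. Since $d=1$, the defining section $\tilde f$ is a global section of $\mathcal{O}_{\mathbb{P}(E)}(1)$ (working with $L$ trivial, as in the Clifford setting; for general $L$ one twists by $\pi^*L^{-1}$, which does not affect $R^i\pi_*$ vanishing up to tensoring with a line bundle on $X$). Multiplication by $\tilde f$ gives a morphism $\mathcal{O}_{\mathbb{P}(E)}(-1)\xrightarrow{\tilde f}\mathcal{O}_{\mathbb{P}(E)}$ whose cokernel is $i_*\mathcal{O}_{Y_f}$ by definition of $Y_f$. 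So the plan is: (1) show this map is injective, (2) conclude from Proposition~\ref{reso} that $\mathcal{O}_{Y_f}$ is relatively Ulrich.

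For injectivity, I would argue stalkwise: $\mathbb{P}(E)$ is smooth over the smooth scheme $X$, hence its local rings are regular, in particular domains. Since $Y_f$ is a relative hypersurface, $\tilde f$ is not identically zero on any fibre, so its image in each local ring is nonzero, and therefore multiplication by $\tilde f$ is injective. This yields the exact sequence
\[
0\longrightarrow \mathcal{O}_{\mathbb{P}(E)}(-1)\xrightarrow{\ \tilde f\ }\mathcal{O}_{\mathbb{P}(E)}\longrightarrow i_*\mathcal{O}_{Y_f}\longrightarrow 0,
\]
which is exactly the shape required in Proposition~\ref{reso} with $N=1$, confirming also that $r=1=N/d$.

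Alternatively (and as a sanity check) I would verify the definition directly: global generation of $\mathcal{O}_{Y_f}$ is clear since the constant section $1$ generates it. For the vanishing, twist the sequence by $\mathcal{O}(-j)$, $1\le j\le n-1$, and use Lemma~\ref{lemma 1.1}: $R^i\pi_*\mathcal{O}(-j)=R^i\pi_*\mathcal{O}(-j-1)=0$ for all $i$ because $-n-1<-j-1\le -2$ and $-j\le -1$. The long exact sequence of higher direct images then gives $R^i(\pi|_{Y_f})_*\mathcal{O}_{Y_f}(-j)=0$. Fibrewise this is just the statement that a hyperplane $\mathbb{P}^{n-1}\subset\mathbb{P}^n$ has $H^i(\mathcal{O}_{\mathbb{P}^{n-1}}(-j))=0$ for $1\le j\le n-1$, consistent with Remark~\ref{rem:fiberwise-ulrich}.

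The only step needing real care is injectivity of $\tilde f$, i.e.\ that $\tilde f$ is a nonzerodivisor; everything else is a direct application of Lemma~\ref{lemma 1.1} and Proposition~\ref{reso}. I expect this to be handled by the smoothness (regularity, hence integrality of local rings) of $\mathbb{P}(E)$ together with the standing assumption that $Y_f$ is a genuine Cartier divisor, so no fibre is contained in $Y_f$.
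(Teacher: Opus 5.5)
Your proposal is correct and matches the paper's proof. The paper writes the defining sequence $0\to\mathcal{O}_{\mathbb{P}(E)}(-1)\otimes\pi^*L^{-1}\to\mathcal{O}_{\mathbb{P}(E)}\to i_*\mathcal{O}_{Y_f}\to 0$, twists it by $\mathcal{O}(-j)$, kills all $R^k\pi_*$ of the outer terms via Lemma~\ref{lemma 1.1} and the projection formula, and notes global generation by $1$; this is exactly your direct verification. Your route through Proposition~\ref{reso} repackages the same computation and, as you note, applies as stated only for trivial $L$; your stalkwise nonzerodivisor argument spells out what the paper takes for granted from $Y_f$ being an effective Cartier divisor.
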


\begin{proof}
Since \(d=1\), there exists a line bundle \(L \in \mathrm{Pic}(X)\) such that \(Y_f\) is the zero locus of a global section of \(\mathcal{O}_{\mathbb{P}(E)}(1) \otimes \pi^* L\). The short exact sequence defining the divisor \(Y_f\) is
\[
0 \to \mathcal{O}_{\mathbb{P}(E)}(-1) \otimes \pi^* L^{-1} \to \mathcal{O}_{\mathbb{P}(E)} \to i_* \mathcal{O}_{Y_f} \to 0,
\]
where \(i \colon Y_f \hookrightarrow \mathbb{P}(E)\).

For any integer \(j\) with \(1 \leq j \leq n-1\), twist this sequence by \(\mathcal{O}_{\mathbb{P}(E)}(-j)\):
\[
0 \to \mathcal{O}_{\mathbb{P}(E)}(-1-j) \otimes \pi^* L^{-1} \to \mathcal{O}_{\mathbb{P}(E)}(-j) \to i_* \mathcal{O}_{Y_f}(-j) \to 0.
\]

We now apply the derived pushforward \(R^\bullet \pi_*\). By the projective bundle formula (Lemma~\ref{lemma 1.1}), for a projective bundle of relative dimension \(n\) we have:
 \(\pi_* \mathcal{O}_{\mathbb{P}(E)}(l) = 0\) for all \(l < 0\),
 \(R^k \pi_* \mathcal{O}_{\mathbb{P}(E)}(l) = 0\) for all \(0 < k < n\) and all \(l \in \mathbb{Z}\),
 \(R^{n} \pi_* \mathcal{O}_{\mathbb{P}(E)}(l) = 0\) for all \(l \geq -n\).

Consider first the middle term \(\mathcal{O}_{\mathbb{P}(E)}(-j)\). Since \(1 \leq j \leq n-1\), we have \(-j \leq -1 < 0\), so \(\pi_* \mathcal{O}_{\mathbb{P}(E)}(-j) = 0\). Moreover, \(-j \geq -(n-1) \geq -n+1\), so \(R^{n} \pi_* \mathcal{O}_{\mathbb{P}(E)}(-j) = 0\). Thus
\[
R^k \pi_* \mathcal{O}_{\mathbb{P}(E)}(-j) = 0 \quad \text{for all } k \geq 0.
\]

For the left-most term \(\mathcal{O}_{\mathbb{P}(E)}(-1-j)\), we have \(-1-j \leq -2 < 0\), so again the degree-zero pushforward vanishes. Furthermore, \(-1-j \geq -1 -(n-1) = -n\), so the highest relative cohomology also vanishes: \(R^{n-1} \pi_* \mathcal{O}_{\mathbb{P}(E)}(-1-j) = 0\). Hence
\[
R^k \pi_* \mathcal{O}_{\mathbb{P}(E)}(-1-j) = 0 \quad \text{for all } k \geq 0.
\]

By the projection formula,
\[
R^k \pi_* \bigl( \mathcal{O}_{\mathbb{P}(E)}(-1-j) \otimes \pi^* L^{-1} \bigr) \cong R^k \pi_* \mathcal{O}_{\mathbb{P}(E)}(-1-j) \otimes L^{-1} = 0
\]
for all \(k \geq 0\).

Applying the derived functor \(R^\bullet \pi_*\) to the twisted short exact sequence therefore yields a long exact sequence in which the first two terms vanish identically. This forces
\[
R^k (\pi|_{Y_f})_* \mathcal{O}_{Y_f}(-j) = R^k \pi_* (i_* \mathcal{O}_{Y_f}(-j)) = 0
\]
for all \(k \geq 0\) and all \(1 \leq j \leq n-1\).

Finally, \(\mathcal{O}_{Y_f}\) is globally generated by the constant section $1$. Thus \(\mathcal{O}_{Y_f}\) satisfies both conditions in Definition~\ref{relativeulrich} and is therefore relatively Ulrich.
\end{proof}

\begin{theorem}
Let \(X\) be a smooth projective scheme over an algebraically closed field \(k\), and let \(E\) be a locally free sheaf of rank \(n+1\) on \(X\). Let \(Y_f \subset \mathbb{P}(E)\) be a relative hypersurface of degree \(d \geq 1\). Then the trivial line bundle \(\mathcal{O}_{Y_f}\) is relatively Ulrich with respect to \(\pi|_{Y_f} \colon Y_f \to X\) if and only if \(d=1\).
\end{theorem}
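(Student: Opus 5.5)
The ``if'' direction is exactly Proposition~\ref{prop:trivial-ulrich-converse}, so it remains to show that if \(\mathcal{O}_{Y_f}\) is relatively Ulrich then \(d=1\). The plan is to compute the higher direct images of \(\mathcal{O}_{Y_f}(-j)\) directly from the structure sequence of the divisor and exhibit one nonzero term when \(d\ge 2\). (We assume, as the definition implicitly requires, that the fibres \(Y_x\) have positive dimension, i.e.\ \(n\ge 2\), so that the range \(1\le j\le n-1\) is nonempty.)

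The divisor \(Y_f\) is cut out by a section of \(\mathcal{O}_{\mathbb{P}(E)}(d)\otimes\pi^*L\), so we have
\[
0\to \mathcal{O}_{\mathbb{P}(E)}(-d-j)\otimes\pi^*L^{-1}\to \mathcal{O}_{\mathbb{P}(E)}(-j)\to i_*\mathcal{O}_{Y_f}(-j)\to 0 .
\]
Take \(j=n-1\). By Lemma~\ref{lemma 1.1}, every \(R^k\pi_*\mathcal{O}_{\mathbb{P}(E)}(-(n-1))\) vanishes, since \(-(n-1)<0\) and \(-(n-1)>-n-1\). The long exact sequence, together with the projection formula, therefore gives
\[
R^{n-1}(\pi|_{Y_f})_*\mathcal{O}_{Y_f}(-(n-1))\;\cong\; R^{n}\pi_*\mathcal{O}_{\mathbb{P}(E)}(-d-n+1)\otimes L^{-1}.
\]

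It remains to show that the right-hand side is nonzero when \(d\ge 2\). Relative Serre duality for \(\pi\), with \(\omega_{\pi}\cong \mathcal{O}_{\mathbb{P}(E)}(-n-1)\otimes\pi^*\det E^{\vee}\) up to the sign convention, identifies \(R^n\pi_*\mathcal{O}(-m)\) with the dual of \(\mathrm{Sym}^{m-n-1}E^{\vee}\) tensored with a line bundle. Here \(m=d+n-1\ge n+1\), so this sheaf is locally free of rank \(\binom{d-2+n}{n}\ge1\), and in particular nonzero.

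A cleaner fibrewise argument avoids duality bookkeeping. By Remark~\ref{rem:fiberwise-ulrich}, it suffices to show that \(\mathcal{O}_{Y_x}\) is not Ulrich on a degree \(d\) hypersurface \(Y_x\subset\mathbb{P}^n\). Either \(H^{n-1}(Y_x,\mathcal{O}(-(n-1)))\cong H^n(\mathbb{P}^n,\mathcal{O}(-d-n+1))\neq0\), or more simply \(h^0(\mathcal{O}_{Y_x})=1\ne d=\mathrm{rk}\cdot d\), contradicting the count \(h^0=dr\) for Ulrich bundles (cf.\ \cite[Proposition~2]{Arnuad}, or the pushforward computation \((\pi_{Y_f})_*F\cong\mathcal{O}_X^{\oplus dr}\) shown earlier). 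Globally this reads \((\pi|_{Y_f})_*\mathcal{O}_{Y_f}=\mathcal{O}_X\), which is of rank \(1\ne d\).

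The main obstacle is only bookkeeping: getting the index of Lemma~\ref{lemma 1.1} right, namely that \(R^n\) is nonzero exactly for \(l\le -n-1\), and verifying that \(-d-n+1\le -n-1\) precisely when \(d\ge2\). I would present the rank-count argument as the primary proof, since it uses only the earlier proposition that \(\pi_*F\) is trivial of rank \(dr\) for relatively Ulrich \(F\) (via the resolution), and keep the cohomological computation as confirmation.
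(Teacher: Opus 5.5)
Your proposal is correct. The fibrewise computation you list, $H^{n-1}(Y_x,\mathcal{O}_{Y_x}(-(n-1)))\cong H^n(\mathbb{P}^n,\mathcal{O}(-d-n+1))\neq 0$ for $d\ge 2$, is exactly the paper's proof. The paper restricts to fibres, uses that $\mathcal{O}_{\mathbb{P}^n}(-j)$ is acyclic for $1\le j\le n-1$, and applies Serre duality at the extreme twist $j=n-1$. The ``if'' direction is handled the same way in both.

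Your other two routes are genuinely different.
\begin{itemize}
\item \textbf{Global computation.} You show that $R^{n-1}(\pi|_{Y_f})_*\mathcal{O}_{Y_f}(-(n-1))\cong R^n\pi_*\mathcal{O}_{\mathbb{P}(E)}(-d-n+1)\otimes L^{-1}$ is locally free of rank $\binom{d+n-2}{n}\ge 1$. This contradicts Definition~\ref{relativeulrich} as written, without the base-change step behind Remark~\ref{rem:fiberwise-ulrich}. The cost is that you need $R^n\pi_*\mathcal{O}(l)\neq 0$ for $l\le -n-1$. Lemma~\ref{lemma 1.1} does not give this, but you supply it correctly via relative duality (Grauert would also do).
\item \textbf{Rank count.} This is the most elementary route. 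It needs only $h^0(\mathcal{O}_{Y_x})=1$ and $h^0=d\cdot\operatorname{rk}$ for Ulrich bundles on a degree $d$ hypersurface.
\end{itemize}

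If you make the rank count primary, base it on the fibrewise statement from \cite{Arnuad}, or on Grauert (which makes $(\pi|_{Y_f})_*F$ locally free of rank $d\cdot\operatorname{rk}F$). Do not base it on the paper's assertion that $(\pi_{Y_f})_*F$ is \emph{trivial}, because that assertion is false in general:
\begin{itemize}
\item By Proposition~\ref{Wild}, $F\otimes\pi^*G$ is relatively Ulrich for any globally generated $G$, yet $\pi_*(F\otimes\pi^*G)\cong\pi_*F\otimes G$.
\item The linear resolution it is derived from (via Theorem~\ref{equivalence}) already fails for $\mathcal{O}_{Y_f}$ on a relative hyperplane with $L\not\cong\mathcal{O}_X$. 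There the kernel is $\mathcal{O}(-1)\otimes\pi^*L^{-1}$, not $\mathcal{O}(-1)$.
\end{itemize}
Your argument only uses the rank, and that part is sound.

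Finally, your assumption $n\ge 2$ is genuinely needed, and the paper silently omits it. For $n=1$ the vanishing range is empty, so the globally generated sheaf $\mathcal{O}_{Y_f}$ is relatively Ulrich for every $d$. This is consistent with $h^0(\mathcal{O})=d$ for a degree $d$ subscheme of $\mathbb{P}^1$, and the paper's step at $j=n-1$ has no content there.
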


\begin{proof}
The bundle \(\mathcal{O}_{Y_f}\) is always globally generated by the constant section \(1\).

For the vanishing condition, it suffices (by cohomology and base change) to check the fibers. Fix \(x \in X\) and let \(Y_x \subset \mathbb{P}^n\) be the corresponding hypersurface of degree \(d\). Consider the short exact sequence
\[
0 \to \mathcal{O}_{\mathbb{P}^n}(-d-j) \to \mathcal{O}_{\mathbb{P}^n}(-j) \to \mathcal{O}_{Y_x}(-j) \to 0.
\]
For \(1 \leq j \leq n-1\), the middle term \(\mathcal{O}_{\mathbb{P}^n}(-j)\) has vanishing cohomology in every degree. Therefore
\[
H^i(Y_x, \mathcal{O}_{Y_x}(-j)) \cong H^{i+1}(\mathbb{P}^n, \mathcal{O}_{\mathbb{P}^n}(-d-j)).
\]
The right-hand side can be non-zero only when \(i+1 = n\). By Serre duality this happens precisely when \(-d-j \leq -n-1\).

Thus we need \(-d-j > -n-1\) for all \(1 \leq j \leq n-1\). The strongest condition occurs at \(j = n-1\):
\[
-d-(n-1) > -n-1 \implies -d+1 > -1 \implies d < 2.
\]
Since \(d\) is a positive integer, this forces \(d=1\).

Conversely, when \(d=1\), \(Y_f\) is a relative hyperplane and \(\mathcal{O}_{Y_f}\) is  relatively Ulrich follows from Proposition~\ref{prop:trivial-ulrich-converse}.
\end{proof}

\begin{remark}
     Following this construction, for any rank $r$, there exists a rank $r$ relatively Ulrich bundle on $Y$. 
\end{remark}

\begin{remark}
A relative hyperplane in $\mathbb{P}(E)$ attains the minimal relative Ulrich complexity of $1$. Since the rank one trivial line bundle naturally satisfies the Ulrich condition on these spaces, it completely bypasses the need to construct bundles of higher rank.
\end{remark}

% \begin{example}
% Let $\varepsilon=O_{X}^{n+1}$ be the trivial bundle on $X$ and Let $F=l_{0}l_{3}-l_{1}l_{2}$ be the quadratic hypersurface where each $l_{i}$ is homogeneous one form. Then there exist an relative Ulrich line bundle on $F$.
% \end{example}
% \begin{proof}
% As we have $\varepsilon=O_{X}^{n+1}$ we have $P(\varepsilon)=\mathbb{P}^{n}_{X}$. Now let,\\
% \begin{equation*}
%     \mathbf{L}=\left(
%     \begin{array}{ccc}
%     X_{1} & X_{2}\\
%     X_{3} & X_{4}\\
    
%     \end{array}   \right)
% \end{equation*}
% Now consider the exact sequence,\\
% \begin{equation*}
%     0 \rightarrow O_{\mathbb{P}(\varepsilon)}(-1)^2 \xrightarrow{L} O_{\mathbb{P}(\varepsilon)}^2 \rightarrow i_{*}M \rightarrow 0
% \end{equation*}
% from our previous construction  $M$ is an relatively ulrich line bundle which is supported  on the hypersurface $F$. Now following proposition \ref{proposition 5.1}  we can show that it corresponds to a representation of generalized clifford algebra and by our previous observation it is irreducible.
% \end{proof}

\section{Ulrich Wildness in Relative Settings}\label{sec 8}

Classifying vector bundles on algebraic varieties is one of the most fundamental — and notoriously difficult — problems in algebraic geometry. A category is said to have \emph{wild representation type} if classifying its objects is at least as hard as classifying representations of arbitrary finite-dimensional algebras.

In the absolute (classical) setting, it has long been known that Ulrich bundles often exhibit this wild behaviour. In this section we show that the same phenomenon occurs in the relative setting: wildness propagates naturally from the base scheme \(X\) to the category of relatively Ulrich bundles on the hypersurface \(Y_f \subset \mathbb{P}(E)\).

   gin{align*}

\subsection{Geometric Setup and Core Assumptions}

Let us establish the geometric setting for our results. We consider a flat, projective morphism $\pi : Y_f \to X$. To ensure our twisting constructions behave well cohomologically, we make three core assumptions:
\begin{enumerate}
    \item The fibers are sufficiently well-behaved such that $\pi_*\mathcal{O}_{Y_f} \cong \mathcal{O}_X$.

\begin{lemma}
Let $\pi \colon \mathbb{P}(E) \to X$ be a projective bundle of relative 
dimension $n$, and let $Y_f \subset \mathbb{P}(E)$ be a relative 
hypersurface of degree $d$. Then $\pi_*\mathcal{O}_{Y_f} \cong \mathcal{O}_X$ 
if  $n \geq 2$.
\end{lemma}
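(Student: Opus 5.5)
The plan is to use the ideal sheaf sequence of the relative hypersurface and push it forward along $\pi$. As in Section~\ref{sec 1}, $Y_f$ is the zero locus of a section of $\mathcal{O}_{\mathbb{P}(E)}(d)\otimes\pi^*L$. Since this section is fiberwise a nonzero degree-$d$ form, it is a non-zerodivisor, and we obtain the exact sequence
\[
0 \to \mathcal{O}_{\mathbb{P}(E)}(-d)\otimes \pi^*L^{-1} \to \mathcal{O}_{\mathbb{P}(E)} \to i_*\mathcal{O}_{Y_f} \to 0 .
\]
We apply $R^\bullet\pi_*$ and use $\pi|_{Y_f} = \pi\circ i$ together with exactness of $i_*$. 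This gives $(\pi|_{Y_f})_*\mathcal{O}_{Y_f} = \pi_*(i_*\mathcal{O}_{Y_f})$, which sits in the long exact sequence
\[
0 \to \pi_*\mathcal{O}(-d)\otimes L^{-1} \to \pi_*\mathcal{O}_{\mathbb{P}(E)} \to \pi_*i_*\mathcal{O}_{Y_f} \to R^1\pi_*\mathcal{O}(-d)\otimes L^{-1} \to \cdots,
\]
where we have also used the projection formula.

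Next we identify the terms using Lemma~\ref{lemma 1.1}. Since $-d<0$, we get $\pi_*\mathcal{O}(-d)=0$, and $\pi_*\mathcal{O}_{\mathbb{P}(E)}\cong \operatorname{Sym}^0E^\vee=\mathcal{O}_X$. For the connecting term, Lemma~\ref{lemma 1.1} gives $R^1\pi_*\mathcal{O}(l)=0$ for all $l$ whenever $0<1<n$, that is, whenever $n\ge 2$. This is exactly where the hypothesis enters. With these identifications the sequence collapses to an isomorphism $\mathcal{O}_X \xrightarrow{\sim} (\pi|_{Y_f})_*\mathcal{O}_{Y_f}$. This map is the natural one induced by $\mathcal{O}_{\mathbb{P}(E)}\to i_*\mathcal{O}_{Y_f}$, and hence is the structure map $\mathcal{O}_X\to(\pi|_{Y_f})_*\mathcal{O}_{Y_f}$.

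I expect the only delicate point to be the first step: justifying that the ideal sheaf of $Y_f$ is the line bundle $\mathcal{O}(-d)\otimes\pi^*L^{-1}$. This requires that the defining section be injective as a map $\mathcal{O}(-d)\otimes\pi^*L^{-1}\to\mathcal{O}$. I would handle this by working locally over an affine open $U\subset X$ trivializing $E$ and $L$, where $f$ becomes a form in $\mathcal{O}_X(U)[y_0,\dots,y_n]$ with nowhere-vanishing specialization on fibers, because $Y_f$ is a relative hypersurface. Since $X$ is smooth and hence integral locally, such an element is a non-zerodivisor. It is also worth remarking why $n\ge2$ is genuinely needed: for $n=1$ the fibers are $d$ points, so $(\pi|_{Y_f})_*\mathcal{O}_{Y_f}$ has rank $d$. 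In the sequence this appears as the nonvanishing of $R^1\pi_*\mathcal{O}(-d)$, of rank $d-1$.
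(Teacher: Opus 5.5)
Your proposal is correct and is essentially the paper's own argument: push the ideal-sheaf sequence of $Y_f$ forward along $\pi$, then use $\pi_*\mathcal{O}(-d)=0$, $\pi_*\mathcal{O}_{\mathbb{P}(E)}\cong\mathcal{O}_X$, and $R^1\pi_*\mathcal{O}(-d)=0$ for $n\ge 2$. Your extra bookkeeping makes explicit details the paper leaves implicit: the twist by $\pi^*L^{-1}$ handled via the projection formula, the injectivity of the defining section, and the rank check showing $n\ge 2$ is needed when $n=1$.
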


\begin{proof}
From $0 \to \mathcal{O}(-d) \to \mathcal{O} \to \mathcal{O}_{Y_f} \to 0$, apply 
$\pi_*$. By the projective bundle formula, $\pi_*\mathcal{O}(-d) = 0$ for 
$d \geq 1$, and $R^1\pi_*\mathcal{O}(-d) = 0$  when $n \geq 2$. 
Thus for $n \geq 2$ we obtain $0 \to \mathcal{O}_X \to \pi_*\mathcal{O}_{Y_f} 
\to 0$, giving the isomorphism. 
\end{proof}

    \item There exists a \textit{relatively simple} Ulrich bundle $F$ on $Y_f$. By this, we mean that its traceless endomorphisms vanish 
    , yielding $\pi_*\mathcal{E}nd(F) \cong \mathcal{O}_X$. An explicit example is given by the trivial line bundle on a relative hyperplane (see Proposition~\ref{prop:trivial-ulrich-converse} or Example~\ref{ex:quadric-ulrich}).
    \item The base space $X$ is already highly complex-specifically, its category of vector bundles has wild representation type. Concrete examples include any smooth projective curve of genus \(g \geq 2\), any surface with positive geometric genus \(p_g > 0\) (such as a K3 surface or a general type surface), and more generally any smooth projective variety whose derived category is rich enough to encode the representation theory of arbitrary finite-dimensional algebras.
\end{enumerate}

\subsection{The Propagation of Complexity}

Our main result demonstrates a principle of  ``complexity contagion": if the base space is wild, the total space perfectly inherits this wildness within its category of relatively Ulrich bundles.

\begin{theorem}[Relative Ulrich Wildness] \label{thm:relative_wildness}
Under the setup described above, the category of relatively Ulrich bundles on $Y_f$ has wild representation type.
\end{theorem}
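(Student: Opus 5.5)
The strategy is to transport wildness from $X$ to $Y_f$ through the twisting functor
\[
\Phi_F \colon \mathrm{Vect}(X) \to \mathrm{Vect}(Y_f), \qquad G \mapsto F \otimes \pi^*G,
\]
where $F$ is the relatively simple Ulrich bundle from assumption (2). Proposition~\ref{Wild} shows that $\Phi_F(G)$ is relatively Ulrich whenever $G$ is globally generated. The first step is therefore to replace the wild family on $X$ by a globally generated one. I would start from a wild family $\{G_N\}$ of indecomposable bundles on $X$ and twist by a fixed high power $\mathcal{O}_X(m)$ of an ample line bundle. This keeps indecomposability and all Hom/Ext dimensions, since twisting by a line bundle is an autoequivalence, and it makes each $G_N$ globally generated. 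Only finitely many bundles in any given family need to be handled at once, so a uniform $m$ per family suffices.

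The second step is to show that $\Phi_F$ is fully faithful, so that it preserves indecomposability and non-isomorphism. By the projection formula and assumption (2),
\[
\pi_*\mathcal{H}om(F\otimes\pi^*G, F\otimes \pi^*G') \cong \pi_*\mathcal{E}nd(F)\otimes \mathcal{H}om(G,G') \cong \mathcal{H}om(G,G').
\]
Taking global sections gives $\mathrm{Hom}_{Y_f}(\Phi_F G,\Phi_F G')\cong \mathrm{Hom}_X(G,G')$. In particular, $\mathrm{End}(\Phi_F G_N)\cong \mathrm{End}(G_N)$ is local, so $\Phi_F G_N$ is indecomposable, and the images of non-isomorphic bundles stay non-isomorphic.

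The third step controls $\mathrm{Ext}^1$, which is needed for $\dim \mathrm{Ext}^1_{Y_f}(E_N,E_N)\to\infty$ with $E_N=\Phi_F G_N$. I would use the Leray spectral sequence for $\pi|_{Y_f}$ applied to $\mathcal{E}nd(E_N)\cong \mathcal{E}nd(F)\otimes\pi^*\mathcal{E}nd(G_N)$. Its five-term exact sequence gives an injection
\[
H^1\bigl(X,\pi_*\mathcal{E}nd(E_N)\bigr)=H^1(X,\mathcal{E}nd(G_N)) \hookrightarrow \mathrm{Ext}^1_{Y_f}(E_N,E_N),
\]
so $\dim \mathrm{Ext}^1_{Y_f}(E_N,E_N)\ge \dim \mathrm{Ext}^1_X(G_N,G_N)$. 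This lower bound tends to infinity for the wild family on $X$; for instance, on a curve of genus $g\ge 2$, Riemann--Roch gives $\dim\mathrm{Ext}^1 \ge (g-1)\,\mathrm{rk}(G_N)^2$ for simple $G_N$ of growing rank. More generally, I would check that $\Phi_F$ applied to a representation embedding $\mathrm{mod}\,\Lambda\to\mathrm{Vect}(X)$ of a wild algebra $\Lambda$ remains a representation embedding. This follows from full faithfulness and exactness of $\Phi_F$, since $F$ and $\pi$ are flat.

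The main obstacle I expect is the global generation step. The wild families on $X$ must be made globally generated without destroying wildness, and a single $m$ has to work for a whole family parametrized by $\mathrm{mod}\,\Lambda$, not just for one bundle. I would address this with Castelnuovo--Mumford regularity bounds that are uniform in bounded families: the images of $\Lambda$-modules of fixed dimension vector form a bounded family. Failing that, I would state wildness via the sequence $\{E_N\}$, using a separate twist for each $N$.
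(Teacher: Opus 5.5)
Your argument is essentially the paper's own proof. Both use the twisting functor $G\mapsto F\otimes\pi^*G$, full faithfulness from $\pi_*\mathcal{E}nd(F)\cong\mathcal{O}_X$ and the projection formula, and the Leray five-term injection $\mathrm{Ext}^1_X(G,G)\hookrightarrow\mathrm{Ext}^1_{Y_f}(E_G,E_G)$. Your preliminary twist by $\mathcal{O}_X(m)$ goes further and repairs a step the paper skips: the paper applies $\Phi$ to all of $\mathrm{Vect}(X)$, even though Proposition~\ref{Wild} requires $G$ to be globally generated.

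For a uniform choice of $m$, the regularity bounds you suggest would only be uniform for each fixed dimension vector, which does not give one $m$ for all of $\mathrm{mod}\,\Lambda$. A simpler argument works instead. An exact embedding $\mathrm{mod}\,\Lambda\to\mathrm{Vect}(X)$ has the form $M\mapsto\mathcal{B}\otimes_\Lambda M$, where $\mathcal{B}$ is the image of $\Lambda$. Every image is then a quotient of some $\mathcal{B}^{\oplus k}$, so a single $m$ making $\mathcal{B}(m)$ globally generated works for all $M$ at once.
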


Our strategy to prove this is constructive. We will take arbitrary vector bundles from the wild base space $X$, pull them up to the total space, and ``twist'' them by our known Ulrich bundle $F$. We must prove that this operation perfectly preserves both the Ulrich property and the complexity of the original bundles.

\vspace{0.5cm}
\noindent \textbf{Step 1: Preserving the Ulrich Property.} \\
First, we confirm that our twisting operation actually produces Ulrich bundles.

\begin{proposition}[Twisting Preserves Ulrich Property] \label{prop:twisting}
Let $G$ be a locally free and globally generated sheaf on $X$. Define the twisted bundle $E_G := F \otimes \pi^*G$. Then $E_G$ is relatively Ulrich on $Y_f$.
\end{proposition}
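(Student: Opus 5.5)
The plan is to verify the two defining conditions of Definition~\ref{relativeulrich} for $E_G = F\otimes \pi^*G$ directly: the vanishing of the higher direct images, and global generation. This is essentially the content of Proposition~\ref{Wild}, but I will write it out with the global generation step made explicit.

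\textbf{Vanishing.} Fix $1\le j\le n-1$ and $i\ge 0$. Since $\pi^*G$ is locally free, the projection formula for $\pi|_{Y_f}$ gives
\[
R^i(\pi|_{Y_f})_*\bigl(F(-j)\otimes \pi^*G\bigr)\cong R^i(\pi|_{Y_f})_*F(-j)\otimes G .
\]
The right-hand side vanishes because $F$ is relatively Ulrich. Note that this step does not use global generation of $G$, only local freeness. Alternatively, one can argue fiberwise via Remark~\ref{rem:fiberwise-ulrich}: on each fiber $(Y_f)_x$, the bundle $E_G|_{(Y_f)_x}\cong F_x^{\oplus \operatorname{rk}G}$ is a direct sum of Ulrich bundles, hence Ulrich.

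\textbf{Global generation.} This is the only genuine point. Choose surjections $\mathcal{O}_{Y_f}^{\oplus a}\twoheadrightarrow F$ and $\mathcal{O}_X^{\oplus b}\twoheadrightarrow G$. Pulling back is right exact, so $\mathcal{O}_{Y_f}^{\oplus b}\twoheadrightarrow \pi^*G$ is surjective. Tensor products of surjections of sheaves are surjective, so
\[
\mathcal{O}_{Y_f}^{\oplus ab}\twoheadrightarrow F\otimes\pi^*G
\]
is a surjection whose components are global sections. Hence $E_G$ is globally generated, and the tensor product of globally generated sheaves is globally generated.

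Combining the two steps, $E_G$ satisfies Definition~\ref{relativeulrich}. I do not expect a real obstacle. The only subtlety is keeping track of which hypotheses are used where: local freeness of $G$ is needed for the projection formula, and global generation of $G$ is needed only for the global generation of $E_G$.
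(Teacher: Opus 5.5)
Your proposal is correct and follows the paper's route: the paper simply cites Proposition~\ref{Wild}, whose proof is exactly your projection-formula computation $R^i(\pi|_{Y_f})_*\bigl(F(-j)\otimes\pi^*G\bigr)\cong R^i(\pi|_{Y_f})_*F(-j)\otimes G$. Your explicit global-generation step, tensoring the surjections $\mathcal{O}_{Y_f}^{\oplus a}\twoheadrightarrow F$ and $\mathcal{O}_{Y_f}^{\oplus b}\twoheadrightarrow\pi^*G$, fills in what the paper leaves implicit and correctly pinpoints where the global generation of $G$ is used.
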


\begin{proof}
Follows from Proposition~\ref{Wild}.
\end{proof}

\noindent \textbf{Step 2: Mirroring Homomorphisms.} \\
Next, we show that this twisting process acts as a perfect mirror. Any maps between bundles on the base correspond exactly to maps between the twisted bundles on the total space.

\begin{proposition}[Reduction of Hom-Groups] \label{prop:hom_reduction}
For any locally free sheaves $G_1, G_2$ on $X$, there is a natural isomorphism:
$$\text{Hom}_{Y_f}(E_{G_1}, E_{G_2}) \cong \text{Hom}_X(G_1, G_2)$$
\end{proposition}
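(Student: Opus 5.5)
The plan is to rewrite the Hom-group as global sections of a sheaf Hom, and then push it down to $X$ with the projection formula. Since $E_{G_i}=F\otimes\pi^*G_i$ are locally free, there is a canonical isomorphism
\[
\mathcal{H}om_{Y_f}(E_{G_1},E_{G_2})\cong \mathcal{E}nd(F)\otimes \pi^*\mathcal{H}om_X(G_1,G_2).
\]
This holds because $\mathcal{H}om(\pi^*G_1,\pi^*G_2)\cong\pi^*\mathcal{H}om(G_1,G_2)$ for locally free $G_i$. Taking global sections gives
\[
\operatorname{Hom}_{Y_f}(E_{G_1},E_{G_2})=H^0\bigl(Y_f,\mathcal{E}nd(F)\otimes\pi^*\mathcal{H}om(G_1,G_2)\bigr).
\]

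Next I will use $H^0(Y_f,-)=H^0(X,\pi_*(-))$, where $\pi$ denotes the restricted projection $Y_f\to X$. The projection formula applies because $\mathcal{H}om(G_1,G_2)$ is locally free of finite rank, and it gives
\[
\pi_*\bigl(\mathcal{E}nd(F)\otimes\pi^*\mathcal{H}om(G_1,G_2)\bigr)\cong \pi_*\mathcal{E}nd(F)\otimes\mathcal{H}om(G_1,G_2).
\]
By the relative simplicity assumption (2) of the setup, $\pi_*\mathcal{E}nd(F)\cong\mathcal{O}_X$. The right-hand side is therefore $\mathcal{H}om(G_1,G_2)$, and its global sections are $\operatorname{Hom}_X(G_1,G_2)$. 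Only $\pi_*$ enters, so no higher direct images are needed for this statement.

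Finally I will check that the isomorphism is natural and is given by the expected map $\varphi\mapsto \mathrm{id}_F\otimes\pi^*\varphi$. The composite identification above sends $\varphi$ to $1\otimes\varphi$, where $1\in H^0(X,\pi_*\mathcal{E}nd(F))$. This requires the isomorphism $\pi_*\mathcal{E}nd(F)\cong\mathcal{O}_X$ in assumption (2) to be the one induced by the unit $\mathcal{O}_X\to\pi_*\mathcal{O}_{Y_f}\to\pi_*\mathcal{E}nd(F)$ coming from $\mathcal{O}_{Y_f}\cdot\mathrm{id}_F$. Assumption (1) and the preceding lemma give $\pi_*\mathcal{O}_{Y_f}\cong\mathcal{O}_X$. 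Naturality in $G_1,G_2$ then follows, because every isomorphism used (the sheaf-Hom identity, the projection formula, and the adjunction) is functorial.

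I expect the main obstacle to be this identification of the abstract isomorphism $\pi_*\mathcal{E}nd(F)\cong\mathcal{O}_X$ with the one induced by the identity, since the setup only states an abstract isomorphism. My first approach is to argue as follows. The trace map splits the inclusion $\mathcal{O}_X\to\pi_*\mathcal{E}nd(F)$ after dividing by $\operatorname{rk}F$, assuming the characteristic does not divide the rank. The traceless part vanishes by hypothesis, so the inclusion is an isomorphism of line bundles, and in particular the canonical map is the isomorphism.
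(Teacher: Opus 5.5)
Your proof is correct and is essentially the paper's argument: identify the Hom-group with $H^0\bigl(Y_f,\mathcal{E}nd(F)\otimes\pi^*\mathcal{H}om(G_1,G_2)\bigr)$, push forward to $X$, apply the projection formula, and use $\pi_*\mathcal{E}nd(F)\cong\mathcal{O}_X$. Your extra check that the isomorphism is $\varphi\mapsto\mathrm{id}_F\otimes\pi^*\varphi$ goes beyond what the paper does, and it needs no characteristic assumption: $\pi_*\mathcal{E}nd(F)$ is a sheaf of $\mathcal{O}_X$-algebras that is locally free of rank one, so for a local generator $u$ with $\mathrm{id}_F=hu$ and $u^2=cu$ you get $u=hcu$, hence $hc=1$, and the unit map $\mathcal{O}_X\to\pi_*\mathcal{E}nd(F)$ is itself an isomorphism.
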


\begin{proof}
We can determine the global homomorphisms between these twisted bundles by pushing the endomorphism sheaf down to the base space $X$. By definition:
$$\text{Hom}_{Y_f}(E_{G_1}, E_{G_2}) = H^0(Y_f, \mathcal{E}nd(F) \otimes \pi^*(G_1^\vee \otimes G_2))$$
Applying the definition of the pushforward and the projection formula, we shift our perspective to $X$:
$$H^0(Y_f, \mathcal{E}nd(F) \otimes \pi^*(G_1^\vee \otimes G_2)) \cong H^0(X, \pi_*(\mathcal{E}nd(F) \otimes \pi^*(G_1^\vee \otimes G_2)))$$
$$\cong H^0(X, \pi_*\mathcal{E}nd(F) \otimes (G_1^\vee \otimes G_2))$$
Here we invoke our second core assumption: the relative simplicity of $F$ guarantees that $\pi_*\mathcal{E}nd(F) \cong \mathcal{O}_X$. The expression immediately simplifies to:
$$H^0(X, \mathcal{O}_X \otimes (G_1^\vee \otimes G_2)) \cong H^0(X, G_1^\vee \otimes G_2) = \text{Hom}_X(G_1, G_2)$$
\end{proof}

An immediate and powerful consequence of this perfect mirroring is that bundles that cannot be broken apart on the base remain unbreakable on the total space.

\begin{lemma}[Preservation of Indecomposability] \label{lem:indecomposable}
If $G$ is an indecomposable bundle on $X$, then the twisted bundle $E_G$ is indecomposable on $Y_f$.
\end{lemma}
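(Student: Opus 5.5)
The plan is to deduce indecomposability from Proposition~\ref{prop:hom_reduction} by comparing endomorphism algebras and using idempotents. Taking $G_1 = G_2 = G$ in that proposition gives a natural isomorphism
\[
\Phi \colon \operatorname{End}_X(G) \longrightarrow \operatorname{End}_{Y_f}(E_G), \qquad u \longmapsto \mathrm{id}_F \otimes \pi^*u .
\]
First I check that this map is exactly the one produced by the chain of isomorphisms in the proof of Proposition~\ref{prop:hom_reduction}. Under $\pi_*\mathcal{E}nd(F)\cong\mathcal{O}_X$, the section $1$ corresponds to $\mathrm{id}_F$, and the projection formula sends $\mathrm{id}_F\otimes \pi^*u$ to $1\otimes u$. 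With that identification, $\Phi$ is visibly compatible with composition and units, so it is an isomorphism of $k$-algebras, not just of vector spaces.

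Next, I use that both $G$ and $E_G$ are coherent on projective schemes, hence their endomorphism rings are finite-dimensional $k$-algebras. A coherent sheaf is then decomposable if and only if its endomorphism algebra contains an idempotent $e \neq 0, \mathrm{id}$. For the forward direction, a splitting $E_G \cong P \oplus Q$ with both summands nonzero gives such an idempotent, the projection onto $P$. Conversely, a nontrivial idempotent $e$ splits $E_G$ as $\ker e \oplus \operatorname{im} e$, and both pieces are coherent since kernels and images of sheaf maps are.

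With these two facts, the argument runs as follows. Suppose $E_G = P \oplus Q$ with $P, Q \neq 0$, and let $e$ be the corresponding idempotent in $\operatorname{End}(E_G)$. Then $\Phi^{-1}(e)$ is an idempotent of $\operatorname{End}_X(G)$, and it is neither $0$ nor $\mathrm{id}$ because $\Phi$ is a unital ring isomorphism. So $G = \ker \Phi^{-1}(e) \oplus \operatorname{im}\Phi^{-1}(e)$ is a nontrivial decomposition, contradicting the indecomposability of $G$. Equivalently, I can use that $G$ indecomposable on projective $X$ means $\operatorname{End}(G)$ is local (Atiyah's Krull--Schmidt), and local algebras transported along $\Phi$ stay local.

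I expect the main obstacle to be the first step: confirming that the isomorphism of Proposition~\ref{prop:hom_reduction} respects composition. That proof gives only an abstract identification of $H^0$ groups, so I plan to write it out explicitly as $u \mapsto \mathrm{id}_F\otimes\pi^*u$ and check multiplicativity directly. The key input is that the unit $\mathcal{O}_X \to \pi_*\mathcal{E}nd(F)$, sending $1$ to $\mathrm{id}_F$, is itself the isomorphism assumed in the relative simplicity hypothesis, and this is where that hypothesis is really used.
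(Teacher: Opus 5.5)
Your proposal is correct and follows the same route as the paper: both transport indecomposability along the isomorphism $\operatorname{End}_{Y_f}(E_G)\cong\operatorname{End}_X(G)$ obtained from Proposition~\ref{prop:hom_reduction}. Two of your steps go beyond the paper's version. You check explicitly that this identification is $u\mapsto \mathrm{id}_F\otimes\pi^*u$, and hence multiplicative, which the paper silently assumes. You also use nontrivial idempotents in place of the paper's ``indecomposable iff local endomorphism ring'' criterion. Both are refinements of the same argument rather than a different one.
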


\begin{proof}
A vector bundle is indecomposable if and only if its endomorphism ring is local. By Proposition \ref{prop:hom_reduction}, the endomorphism rings are identical: $\text{End}_{Y_f}(E_G) \cong \text{End}_X(G)$. Because $G$ is indecomposable, its ring is local, which forces $E_G$ to be indecomposable as well.
\end{proof}

\noindent \textbf{Step 3: Transferring Wild Complexity.} \\
To prove wildness, we do not need a perfect isomorphism of all higher extension groups. We only need to guarantee that the immense complexity of extensions on the base injects into the total space without collapsing.

\begin{proposition}[Injection of First Ext-Groups] \label{prop:ext_injection}
For any locally free sheaf $G$ on $X$, there is a natural injection:
$$\text{Ext}^1_X(G, G) \hookrightarrow \text{Ext}^1_{Y_f}(E_G, E_G)$$
\end{proposition}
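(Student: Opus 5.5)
The plan is to compute $\mathrm{Ext}^1_{Y_f}(E_G,E_G)$ with the Leray spectral sequence for $\pi\colon Y_f\to X$, applied to the locally free sheaf $\mathcal{E}nd(E_G)$. Since $F$ and $\pi^*G$ are locally free,
\[
\mathcal{E}nd(E_G)\cong \mathcal{E}nd(F)\otimes \pi^*\mathcal{E}nd(G),
\]
and therefore
\[
\mathrm{Ext}^i_{Y_f}(E_G,E_G)\cong H^i\bigl(Y_f,\mathcal{E}nd(F)\otimes\pi^*\mathcal{E}nd(G)\bigr).
\]
The projection formula gives $R^q\pi_*\bigl(\mathcal{E}nd(F)\otimes\pi^*\mathcal{E}nd(G)\bigr)\cong R^q\pi_*\mathcal{E}nd(F)\otimes\mathcal{E}nd(G)$. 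For $q=0$, the relative simplicity assumption gives $\pi_*\mathcal{E}nd(F)\cong\mathcal{O}_X$, so the $q=0$ term is just $\mathcal{E}nd(G)$. The $E_2$-page therefore reads
\[
E_2^{p,q}=H^p\bigl(X,R^q\pi_*\mathcal{E}nd(F)\otimes\mathcal{E}nd(G)\bigr)\Rightarrow \mathrm{Ext}^{p+q}_{Y_f}(E_G,E_G),
\]
with $E_2^{p,0}=H^p(X,\mathcal{E}nd(G))=\mathrm{Ext}^p_X(G,G)$.

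The key step is the five-term exact sequence of low-degree terms:
\[
0\to E_2^{1,0}\to \mathrm{Ext}^1_{Y_f}(E_G,E_G)\to E_2^{0,1}\to E_2^{2,0}\to \mathrm{Ext}^2_{Y_f}(E_G,E_G).
\]
Its first map is the edge map $H^1(X,\mathcal{E}nd(G))\to H^1(Y_f,\mathcal{E}nd(E_G))$, and it is injective with no hypothesis on $R^1\pi_*\mathcal{E}nd(F)$, because $E_2^{1,0}$ can receive no nonzero differential. Hence $\mathrm{Ext}^1_X(G,G)\hookrightarrow\mathrm{Ext}^1_{Y_f}(E_G,E_G)$. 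Naturality holds because the edge map is the composite
\[
H^1(X,\mathcal{E}nd G)\xrightarrow{\pi^*}H^1\bigl(Y_f,\pi^*\mathcal{E}nd G\bigr)\to H^1\bigl(Y_f,\mathcal{E}nd F\otimes\pi^*\mathcal{E}nd G\bigr),
\]
where the second arrow is induced by $\mathcal{O}_{Y_f}\to\mathcal{E}nd F$, $1\mapsto\mathrm{id}$. In terms of extensions, this composite sends $0\to G\to G'\to G\to0$ to the twisted sequence $0\to E_G\to F\otimes\pi^*G'\to E_G\to 0$, which is exact because $\pi$ is flat and $F$ is locally free.

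The main obstacle is identifying $E_2^{p,0}$ with $H^p(X,\mathcal{E}nd G)$, since that identification rests on $\pi_*\mathcal{E}nd(F)\cong\mathcal{O}_X$ being the natural map $\mathcal{O}_X\to\pi_*\mathcal{E}nd F$. I would verify this directly. That map is injective because $\pi_*\mathcal{O}_{Y_f}\cong\mathcal{O}_X$ by core assumption (1), and $\mathcal{O}_{Y_f}\to\mathcal{E}nd F$ is split by $\tfrac{1}{\operatorname{rk}F}\mathrm{tr}$ whenever $\operatorname{rk}F$ is invertible in $k$. Its cokernel is the pushforward of the traceless endomorphisms, which vanishes by assumption (2). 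In characteristic dividing $\operatorname{rk}F$ I would instead take (2) to mean literally that the natural map $\mathcal{O}_X\to\pi_*\mathcal{E}nd F$ is an isomorphism. As a sanity check, injectivity can also be seen directly: a twisted extension that splits on $Y_f$ yields a splitting $E_G\to F\otimes\pi^*G'$, and pushing forward with Proposition~\ref{prop:hom_reduction} gives a splitting $G\to G'$ on $X$.
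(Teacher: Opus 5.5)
Your proposal is correct and follows the paper's proof: the Leray spectral sequence for $\mathcal{E}nd(E_G)\cong\mathcal{E}nd(F)\otimes\pi^*\mathcal{E}nd(G)$, the projection formula together with relative simplicity $\pi_*\mathcal{E}nd(F)\cong\mathcal{O}_X$, and injectivity of the edge map $E_2^{1,0}\to H^1$ read off from the five-term exact sequence. Your extra check that this isomorphism is the natural map $\mathcal{O}_X\to\pi_*\mathcal{E}nd(F)$ identifies the injection with the twisting map on extensions, $G'\mapsto F\otimes\pi^*G'$, which makes precise the word ``natural'' that the paper leaves implicit.
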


\begin{proof}
Let $\mathcal{H} = \mathcal{E}nd(E_G) = \mathcal{E}nd(F) \otimes \pi^*\mathcal{E}nd(G)$. To understand the relationship between the extensions on $X$ and $Y_f$, we analyze the Leray spectral sequence for the morphism $\pi$:
$$E_2^{p,q} = H^p(X, R^q\pi_*\mathcal{H}) \Rightarrow H^{p+q}(Y_f, \mathcal{H})$$
Focusing on the low degrees gives us the following five-term exact sequence:
$$0 \to H^1(X, \pi_*\mathcal{H}) \to H^1(Y_f, \mathcal{H}) \to H^0(X, R^1\pi_*\mathcal{H}) \to \dots$$
Using the projection formula, we see that $\pi_*\mathcal{H} \cong \pi_*\mathcal{E}nd(F) \otimes \mathcal{E}nd(G)$. Because $F$ is relatively simple ($\pi_*\mathcal{E}nd(F) \cong \mathcal{O}_X$), this reduces beautifully to $\pi_*\mathcal{H} \cong \mathcal{E}nd(G)$. 

Consequently, the first term in our exact sequence, $H^1(X, \pi_*\mathcal{H})$, is simply $H^1(X, \mathcal{E}nd(G)) = \text{Ext}^1_X(G, G)$. The exact sequence strictly dictates that this term injects into $H^1(Y_f, \mathcal{H}) = \text{Ext}^1_{Y_f}(E_G, E_G)$, completing the proof.
\end{proof}

With this injection established, we can now confidently pull the wild behavior up from the base.

% \begin{proposition}[Reduction of Ext-Groups]\label{prop:ext-reduction}
% For all $i \geq 0$, there is a natural isomorphism:
% \[
% \operatorname{Ext}^i_{Y_f}(E_{G_1}, E_{G_2}) \cong \operatorname{Ext}^i_X(G_1, G_2).
% \]
% \end{proposition}

% \begin{proof}
% As in Proposition \ref{prop:hom_reduction}, simplicity of $F$ gives 
% $\operatorname{Ext}^i_{Y_f}(E_{G_1}, E_{G_2}) \cong \operatorname{Ext}^i_{Y_f}(\pi^*G_1, \pi^*G_2)$. 
% Since $\pi$ is flat and $\pi_*\mathcal{O}_{Y_f} \cong \mathcal{O}_X$ (by the connected 
% fibers hypothesis), the derived projection formula yields:
% \[
% \operatorname{Ext}^i_{Y_f}(\pi^*G_1, \pi^*G_2) \cong \operatorname{Ext}^i_X(G_1, \pi_*\pi^*G_2) 
% \cong \operatorname{Ext}^i_X(G_1, G_2). \qedhere
% \]
% \end{proof}

\begin{proposition}[Construction of Wild Families] \label{prop:wild_families}
There exist indecomposable relatively Ulrich bundles $\{E_N\}$ on $Y_f$ such that their self-extension dimension, $\dim \text{Ext}^1_{Y_f}(E_N, E_N)$, grows arbitrarily large.
\end{proposition}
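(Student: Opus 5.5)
The plan is to set $E_N := E_{G_N} = F \otimes \pi^* G_N$, where $F$ is the relatively simple Ulrich bundle from core assumption (2) and $\{G_N\}$ is a family of indecomposable, globally generated vector bundles on $X$ with $\dim \operatorname{Ext}^1_X(G_N, G_N) \to \infty$. Once such a family on $X$ is available, the conclusion follows from the results already proved:
\begin{itemize}
\item Proposition~\ref{prop:twisting} shows that each $E_N$ is relatively Ulrich, since $G_N$ is locally free and globally generated.
\item Lemma~\ref{lem:indecomposable} shows that each $E_N$ is indecomposable, since $G_N$ is.
\item Proposition~\ref{prop:ext_injection} gives $\dim \operatorname{Ext}^1_{Y_f}(E_N,E_N) \geq \dim \operatorname{Ext}^1_X(G_N,G_N)$, which tends to infinity.
\end{itemize}

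The substantive step is producing the family $\{G_N\}$ on $X$, and this is where I expect the main obstacle. Two requirements are in tension. Lemma~\ref{lem:indecomposable} needs $G_N$ indecomposable, so $\operatorname{End}(G_N)$ must be local. At the same time $\operatorname{Ext}^1(G_N,G_N)$ must be large, and $G_N$ must be globally generated for Proposition~\ref{prop:twisting} to apply.

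I would first try to make $G_N$ \emph{simple} of growing rank, so that $\operatorname{End}(G_N)=k$ is certainly local, and then compute $\operatorname{Ext}^1$ via Riemann--Roch. Take $X$ a curve of genus $g\geq 2$, as in assumption (3). Choose $G_N$ a stable bundle of rank $N$ and degree $e_N$ large, say $e_N > N(2g-1)$. Such bundles exist because the moduli space of stable bundles is nonempty for $g\ge 2$. Then:
\begin{itemize}
\item Stability gives $\operatorname{End}(G_N)=k$, so $G_N$ is simple and in particular indecomposable.
\item Every stable quotient of $G_N$, and every $G_N(-p)$, has slope $>2g-2$, hence $H^1$ vanishes, which yields global generation.
\item Riemann--Roch applied to $\mathcal{E}nd(G_N)$, which has degree $0$ and rank $N^2$, gives
\[
\dim \operatorname{Ext}^1_X(G_N,G_N) = 1 + N^2(g-1) \longrightarrow \infty .
\]
\end{itemize}

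For a general wild base, I would instead appeal directly to assumption (3). Wildness of vector bundles on $X$ supplies, for each $N$, an $N$-parameter family of pairwise non-isomorphic indecomposables. The tangent space to such a family at any member injects into $\operatorname{Ext}^1_X(G_N,G_N)$, which forces that group to have dimension at least $N$. Global generation can then be arranged by twisting with $\mathcal{O}_X(mH)$ for $H$ ample and $m\gg 0$ (Serre). This twist preserves indecomposability and leaves $\mathcal{E}nd(G_N)$, and hence $\operatorname{Ext}^1$, unchanged. I would state the curve case as the explicit model and present the general wild base as following by the same twisting argument.
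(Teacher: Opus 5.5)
Your proposal is correct and is the paper's own argument: set $E_N=F\otimes\pi^*G_N$ with $G_N$ indecomposable and $\dim\operatorname{Ext}^1_X(G_N,G_N)$ large, then apply Proposition~\ref{prop:twisting}, Lemma~\ref{lem:indecomposable} and Proposition~\ref{prop:ext_injection}. You are more careful than the paper, which takes $G_N$ straight from the wildness assumption without checking the global generation that Proposition~\ref{prop:twisting} needs; your twist by $\mathcal{O}_X(mH)$ and your explicit curve family with $\dim\operatorname{Ext}^1_X(G_N,G_N)=1+N^2(g-1)$ supply exactly that, although in your general-base argument the tangent map of a family into $\operatorname{Ext}^1$ need not be injective at every member, so that step is better phrased as a dimension count.
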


\begin{proof}
By our third core assumption, the category of vector bundles on $X$ is wild. Therefore, for any arbitrarily large integer $N > 0$, we can select an indecomposable bundle $G_N$ on $X$ that satisfies $\dim \text{Ext}^1_X(G_N, G_N) \ge N$. 

We define our family on the total space as $E_N := F \otimes \pi^*G_N$. By Proposition \ref{prop:twisting} and Lemma \ref{lem:indecomposable}, $E_N$ is both relatively Ulrich and indecomposable. Crucially, the injection from Proposition \ref{prop:ext_injection} ensures that:
$$\dim \text{Ext}^1_{Y_f}(E_N, E_N) \ge \dim \text{Ext}^1_X(G_N, G_N) \ge N$$
Thus, the self-extension dimensions on the total space are unbounded.
\end{proof}

\vspace{0.5cm}
\noindent \textbf{ Proof of the Main Theorem.} \\
We are now ready to prove Theorem \ref{thm:relative_wildness}. 

\begin{proof}[Proof of Theorem \ref{thm:relative_wildness}]
Consider the functor $\Phi : G \mapsto E_G = F \otimes \pi^*G$. This functor maps the category of vector bundles on $X$, denoted $\text{Vect}(X)$, into the category of relatively Ulrich bundles on $Y_f$. 

This mapping is not merely an abstraction; it is a fully faithful embedding, guaranteed by the exact correspondence of Hom-groups (Proposition \ref{prop:hom_reduction}). Furthermore, it is an exact functor because both the pullback $\pi^*$ and tensoring with the locally free sheaf $F$ are exact operations. 

We have shown that this embedding preserves indecomposability (Lemma \ref{lem:indecomposable}) and successfully transfers unbounded self-extensions up from the base (Proposition \ref{prop:wild_families}). Because $\text{Vect}(X)$ is wild, and $\Phi$ provides a fully faithful, exact embedding into the relatively Ulrich bundles on $Y_f$, the essential image of this functor inherits the exact same wild representation type.
\end{proof}

\end{document}